\numberwithin{equation}{section}
\theoremstyle{plain}
\newtheorem{thm}{Theorem}[section]
\newtheorem{lem}{Lemma}[section]
\newtheorem{pp}{Proposition}[section]
\newtheorem{exa}{Example}[section]
\newtheorem{rem}{Remark}[section]
\newtheorem{den}{Definition}[section]
\newcommand{\tr}{\hbox{tr}}
\newcommand{\beq}{\begin{equation}}
\newcommand{\eeq}{\end{equation}}
\newcommand{\beqq}{\begin{equation*}}
\newcommand{\eeqq}{\end{equation*}}
\newcommand{\bea}{\begin{eqnarray}}
\newcommand{\eea}{\end{eqnarray}}
\newcommand{\beaa}{\begin{eqnarray*}}
\newcommand{\eeaa}{\end{eqnarray*}}
\newcommand{\n}{\noindent}
\newcommand{\q}{\quad}
\newcommand{\qq}{\qquad}
\newcommand{\Ph}{{\rm I\!P}}
\newcommand{\g}{\gamma}
\newcommand{\I}{\varphi}
\newcommand{\G}{\Gamma}
\newcommand{\de}{\delta}
\newcommand{\De}{\Delta}
\newcommand{\al}{\alpha}
\newcommand{\la}{\lambda}
\newcommand{\f}{\infty}
\newcommand{\vs}{\varepsilon}
\newcommand{\cd}{\cdot}
\newcommand{\si}{\sigma}
\newcommand{\be}{\beta}
\newcommand{\Om}{\Omega}
\newcommand{\om}{\omega}
\newcommand{\itf}{\int_{-\infty}^{+\infty}}
\newcommand{\sm}{\setminus}
\newcommand{\inl}{\int_{-\pi}^{\pi}}
\newcommand {\ol} {\overline}
\newcommand {\uu}{{\bf u}}
\newcommand {\0}{{\bf 0}}
\newcommand {\rr}{{\mathbb R}}
\newcommand {\s} {\section}
\newcommand{\Lip}{{\rm Lip}}
\newcommand{\an}{\subset\!\!\!\Longrightarrow}
\begin{document}

\begin{frontmatter}

\title{The Trace Problem for Toeplitz Matrices and
 Operators and its Impact in Probability\thanksref{T1}}
\runtitle{The Trace Problem and its Impact in Probability}
\thankstext{T1}{This is an original survey paper.}

\begin{aug}
\author{\fnms{Mamikon S.} \snm{Ginovyan}\thanksref{t1,t2}
\ead[label=e1]{ginovyan@math.bu.edu}}

\address{Department of Mathematics and Statistics\\
Boston University, Boston, MA, USA\\
\printead{e1}}

\author{\fnms{Artur A.} \snm{Sahakyan}
\ead[label=e2]{sart@ysu.am}}

\address{Department of Mathematics and Mechanics\\
Yerevan State University, Yerevan, Armenia\\
\printead{e2}}

\and
\author{\fnms{Murad S.} \snm{Taqqu}\thanksref{t3}
\ead[label=e3]{murad@math.bu.edu}}

\address{Department of Mathematics and Statistics\\
Boston University, Boston, MA, USA\\
\printead{e3}}

\thankstext{t1}{Corresponding author}
\thankstext{t2}{Supported in part by NSF Grant \#DMS-0706786}
\thankstext{t3}{Supported in part by NSF Grants \#DMS-0706786 and \#DMS-1007616}
\runauthor{M. Ginovyan et al.}

\affiliation{Some University and Another University}

\end{aug}

\begin{abstract}
The trace approximation problem for Toeplitz matrices and its
applications to stationary processes dates back to the classic book
by Grenander and Szeg\"o, {\em Toeplitz forms and their applications}.
It has then been extensively studied in the literature.

In this paper we provide a survey and unified treatment of the
trace approximation problem both for Toeplitz matrices and for operators
and describe applications to discrete- and continuous-time
stationary processes.

The trace approximation problem 
serves indeed as a tool to study many probabilistic and statistical
topics for stationary models. These include central and non-central
limit theorems and large deviations of Toeplitz type random
quadratic functionals, parametric and nonparametric estimation,
prediction of the future value based on the observed past of the
process, etc.

We review and summarize the known results concerning the trace
approximation problem, prove some new results, and provide
a number of applications to discrete- and continuous-time stationary
time series models with various types of memory structures,
such as long memory, anti-persistent and short memory.

\end{abstract}

\begin{keyword}[class=AMS]
\kwd[Primary ]{60G10}
\kwd{62G20}
\kwd[; secondary ]{47B35}
\kwd{15B05}
\end{keyword}

\begin{keyword}
\kwd{Stationary process}
\kwd{Spectral density}
\kwd{Long-memory}
\kwd{Central limit theorem}
\kwd{Toeplitz operator}
\kwd{Trace approximation}
\kwd{Singularity}
\end{keyword}

\tableofcontents
\end{frontmatter}

\section{Introduction}
\label{sec:1}

Toeplitz matrices and operators, which have great independent
interest and a wide range of applications in different fields of
science (economics, engineering, finance, hydrology, physics,
signal processing, etc.), arise naturally in the context of
stationary processes. This is because
the covariance matrix (resp. operator) of a discrete-time (resp.
continuous-time) stationary process is a truncated Toeplitz matrix
(resp. operator) generated by the spectral density of that process,
and vice versa, any non-negative summable function generates a
Toeplitz matrix (resp. operator), which can be considered as a spectral
density of some discrete-time (resp. continuous-time) stationary
process, and therefore the corresponding truncated Toeplitz matrix
(resp. operator) will be the covariance matrix (resp. operator)
of that process.

Truncated Toeplitz matrices and operators are of particular importance,
and serve as tools, to study many topics in the spectral and statistical
analysis of discrete- and continuous-time stationary processes, such as
central and non-central limit theorems and large deviations of Toeplitz type
random quadratic forms and functionals, estimation of the spectral
parameters and functionals, asymptotic expansions of the estimators,
hypotheses testing about the spectrum, prediction of the future
value based on the observed past of the process, etc.
(see, e.g., \cite{Ad} - \cite{Ber}, \cite{BP1,BD,BrD,DC,Dah1},
\cite{FT1} - \cite{TT3},
and references therein).

The present paper is devoted to the problem of approximation of
the traces of products of truncated Toeplitz matrices and operators
generated by integrable real symmetric functions defined on the
unit circle (resp. on the real line).
We discuss estimation of the corresponding errors, and describe
applications to discrete- and continuous-time stationary time
series models with various types of memory structures
(long-memory, anti-persistent and short-memory).

The paper contains a number of new theorems both for Toeplitz matrices
and operators, which are stated in Sections \ref{ATM} and \ref{ATO},
respectively. Those in Section \ref{ATO} involving Toeplitz operators
are proved in Section \ref{PR}.
The corresponding theorems of Section \ref{ATM} involving Toeplitz
matrices can be proved in a similar way and hence are omitted.
Section \ref{ASP} concerns applications and also contains some
new results. These are proved within the section.

The paper is organized as follows.
In the remainder of this section we state the trace approximation
problem and describe the statistical model.
In Section \ref{ATM} we discuss the trace problem for Toeplitz matrices.
Section \ref{ATO} considers the same problem for Toeplitz operators.
Section \ref{ASP} is devoted to some applications of the trace problem
to discrete- and continuous-time stationary processes.
In Section \ref{PR} we outline the proofs of Theorems \ref{T5} - \ref{T5-3}.
An appendix contains the proofs of technical lemmas.

\subsection{The Trace Approximation Problem}
\label{Tr-Problem}
The problem of approximating traces of products of truncated
Toeplitz matrices and operators
can be stated as follows.

Let $\mathcal{H}=\{h_1,h_2,\ldots,h_m\}$ be a collection of integrable
real symmetric functions defined on the domain $\Lambda$, where
$\Lambda=\mathbb{R}:=(-\f, \f)$ or $\Lambda=\mathbb{T}:= (-\pi.\pi]$,
and let $A_T(h_k)$ denote either the $T$-truncated Toeplitz operator,
or the $T\times T$ Toeplitz matrix generated by function $h_k$
(the corresponding definitions are given below).
Further, let
\beq
\label{lf-1}
\tau:=\{\tau_k: \, \tau_k\in \{-1, 1\}, \,\, k=\overline{1, m}\}
\eeq
be a given sequence of $\pm 1$'s. Define
\beq
\label{im-1}
S_{A,\mathcal{H},\tau}(T):=
\frac1T\,\tr\left[\prod_{k=1}^m\{A_T(h_k\}^{\tau_k}\right],
\eeq
where $\tr[A]$ stands for the trace of $A$,
\beq
\label{im-2}
M_{\Lambda, \mathcal{H},\tau}:=
(2\pi)^{m-1}\int_\Lambda\prod_{k=1}^m [h_k(\la)]^{\tau_k}d\la,
\eeq
and
\beq
\label{im-3}
\De_{A,\Lambda,\mathcal{H},\tau}(T):
=|S_{A,\mathcal{H},\tau}(T)-M_{\Lambda, \mathcal{H},\tau}|.
\eeq
The problem is to approximate $S_{A,\mathcal{H},\tau}(T)$ by
$M_{\Lambda, \mathcal{H},\tau}$ and estimate the error rate for
$\De_{A,\Lambda,\mathcal{H},\tau}(T)$ as $T\to\f$.
More precisely, for a given sequence
$\tau=\{\tau_k\in \{-1, 1\}, \, k=\overline{1, m}\}$
find conditions on functions $\{h_k(\la), \,k=\overline{1, m}\}$
such that:
\beaa
&&{\rm Problem \,\, (A):} \q \De_{A,\Lambda,\mathcal{H},\tau}(T)=o(1)
\q {\rm as} \q T\to\f, \q{\rm or}\\
&&{\rm Problem \,\,  (B):} \, \q \De_{A,\Lambda,\mathcal{H},\tau}(T)=O(T^{-\g}),
\q \g>0, \q {\rm as} \q T\to\f.
\eeaa

\n The trace approximation problem goes back to the classical
monograph by Grenander and Szeg\"o \cite{GS}, and has been
extensively studied in the literature
(see, e.g., Kac \cite{Kc}, Rosenblatt \cite{R2}, \cite{R3},
Ibragimov \cite{I},
Taniguchi \cite{Ta}, Avram \cite{A}, Fox and Taqqu \cite{FT1}, Taqqu
\cite{Tq2},  Dahlhaus \cite{Dah1}, Giraitis and Surgailis \cite{GSu},
Ginovyan \cite{G0}, Taniguchi and Kakizawa \cite{TK}, Lieberman and
Phillips \cite{LP}, Giraitis et al. \cite{GKSu},
Ginovyan and Sahakyan \cite{GS1}-\cite{GS6}, and references therein).

In this paper we review and summarize the known results concerning
Problems (A) and (B), prove some new results, as well as provide
a number of applications to discrete- and continuous-time
stationary time series models that have various types of
memory structures (short-, intermediate-, and long-memory).

We focus on the following special case
which is important from an application viewpoint,
and is commonly discussed in the literature:
$m=2\nu$, $\tau_k=1, \, k=\overline{1, m}$
(or $\tau_k=(-1)^k, \, k=\overline{1, m}$), and
\bea
\label{rim-1}
&&h_1(\la)= h_3(\la)=\cdots =h_{2\nu-1}(\la):=f(\la)\\
\label{rim-2}
\nonumber
&&h_2(\la)= h_4(\la)=\cdots =h_{2\nu}(\la):=g(\la).
\eea

Throughout the paper the letters $C$ and $c$,
with or without index, are used to denote positive constants,
the values of which can vary from line to line.
Also, all functions defined on $\mathbb{T}$ are assumed to be
$2\pi$-periodic and periodically extended to $\mathbb{R}$.

\subsection{The Model: Short, Intermediate and Long Memory Processes.}
\label{model}
\n
Let $\{X(u), \ u\in \mathbb{U}\}$ be a centered, real-valued,
continuous-time or discrete-time second-order
stationary process with covariance function $r(u)$,
possessing a spectral density function
$f(\la),$ $\la\in \Lambda$, that is,
$$E[X(u)]=0, \q
r(u)=E[X(t+u)X(t)], \q u, t\in \mathbb{U},$$
and $r(u)$ and $f(\la)$ are connected by the Fourier integral:
\beq
\label{mo1}
r(u)=\int_\Lambda\exp\{i\la u\}f(\la)d\la, \q u\in \mathbb{U}.
\eeq
Thus, the covariance function $r(u)$ and the spectral density
function $f(\la)$ are equivalent specifications of second order
properties for a stationary process $X(u)$.

\n
The time domain $\mathbb{U}$ is the real line $\mathbb{R}$
in the continuous-time case, and the set of integers
$\mathbb{Z}$ in the discrete-time case.
The frequency domain $\Lambda$ is $\mathbb{R}$
in the continuous-time case, and $\Lambda=\mathbb{T}=(-\pi, \pi]$
in the discrete-time case.
In the continuous-time case the process $X(u)$ is also assumed measurable
and mean-square continuous:
$\mathbb{E}[X(t)-X(s)]^2\to 0$ as $t\to s$.

The statistical and spectral analysis of stationary processes requires
{\em two types of conditions\/} on the spectral density $f(\la).$
The first type controls the {\em singularities} of $f(\la)$,
and involves the {\em dependence (or memory) structure } of the
process, while the second type - controls the {\em smoothness} of $f(\la).$

\vskip 2mm

{\bf Dependence (memory) structure of the model}.
We will distinguish the following types of stationary models:

(a) short memory (or short-range dependent),

(b) long memory (or long-range dependent),

(c) intermediate memory (or anti-persistent).

\n
The memory structure of a stationary process is essentially a
measure of the dependence between all the variables in the process,
considering the effect of all correlations simultaneously.
Traditionally memory structure has been defined in the time domain
in terms of decay rates of long-lag autocorrelations, or in the
frequency domain in terms of rates of explosion of low frequency spectra
(see, e.g., Beran \cite{Ber}, Guegan \cite{Gu},
Robinson \cite{Ro2}, and references therein).

It is convenient to characterize the memory structure in terms
of the spectral density function.

\vskip 2mm
{\bf Short-memory models}.
Much of statistical inference
is concerned with {\sl short-memory} stationary models,
where the spectral density  $f(\lambda)$ of the
model is bounded away from zero and infinity,
that is, there are constants  $C_1$ and $C_2$ such that
\beqq
0< C_1 \le f(\la) \le C_2 <\f.
\eeqq

A typical short memory model example is the stationary
Autoregressive Moving Average (ARMA)$(p,q)$ process
whose covariance $r(u)$ is exponentially bounded:
$$|r(k)|\le Cr^{-k}, \q k=1,2,\ldots;\q 0<C<\f; \,\, 0<r<1,$$
and the spectral density is a rational function.

\vskip 2mm
{\bf Discrete-time long-memory and anti-persistent models}.
Data in many fields of science (economics, finance,
hydrology, etc.), however, is well modeled by a stationary process with
{\sl unbounded} or {\sl vanishing} spectral density
(see, e.g., Beran \cite{Ber}, Guegan \cite{Gu}, Palma \cite{Pal},
Taqqu \cite{Tq1} and references therein).

A {\sl long-memory} model is defined to be a
stationary process with {\sl unbounded} spectral density,
and  an {\sl anti-persistent} model -- a stationary
process with {\sl vanishing} spectral density.

In the discrete context, a basic long-memory model
is the Autoregressive Fractionally Integrated Moving Average
(ARFIMA)$(0,d,0))$ process $X(t)$ defined by
\beqq
 (1-B)^dX(t)=\varepsilon(t), \q 0<d<1/2,
 \eeqq
where $B$ is the backshift operator $BX(t)=X(t-1)$ and
$\varepsilon(t)$ is a discrete-time white noise.
The spectral density of $X(t)$ is given by
\beq
\label{MT0}
 f(\la)=|1-e^{i\la}|^{-2d}=(2\sin(\la/2))^{-2d},
 \q 0<\la\le\pi, \q 0<d<1/2.
\eeq
A typical example of an {\sl anti-persistent} model is the
ARFIMA$(0,d,0)$ process $X(t)$ with spectral density
$f(\la)=|1-e^{i\la}|^{-2d}$ with $d<0$.

Note that the condition $d<1/2$ ensures that
$\int_{-\pi}^\pi f(\la)d\la<\f$,
implying that the process $X(t)$ is well defined because
$E[|X(t)|^2]=\int_{-\pi}^\pi f(\la)d\la.$

Data can also occur in the form of a realization of a "mixed"
short-long-intermediate-memory stationary process $X(t)$
with spectral density
$$f(\la)=f_I(\la)f_L(\la)f_S(\la),$$
where $f_I(\la)$, $f_L(\la)$ and $f_S(\la)$ are the intermediate,
long- and short-memory components, respectively.
A well-known example of such process, which appears in many applied
problems, is an ARFIMA$(p,d,q)$ process with spectral density
\beq
\label{AA}
f(\la)=|1-e^{i\la}|^{-2d}h(\la), \q d<1/2,
\eeq
where $h(\la)$ is the spectral density of an ARMA$(p,q)$ process.
\n
Observe that for $ 0<d<1/2$ the model $X(t)$ specified by (\ref{AA})
displays long-memory, for $d<0$ - intermediate-memory,
and for $d=0$ - short-memory.
For $d \ge1/2$ the function $f(\la)$ is not integrable,
and thus it cannot represent a spectral density of a stationary
process. Also, if $d \le-1$, then the series $X(t)$ is not
invertible in a sense that it cannot be used to recover a white noise
$\varepsilon(t)$ by passing $X(t)$ through a linear filter (see \cite{BP1,BD}).

The ARFIMA$(p,d,q)$ processes, first introduced by Granger and Joyeux
\cite{GJ}, and Hosking \cite{Hos}, became very popular due to their ability in
providing a good characterization of the long-run properties of many
economic and financial time series. They are also very useful for
modeling multivariate time series, since they are able to capture a
larger number of long term equilibrium relations among economic
variables than the traditional multivariate ARIMA models
(see, e.g., Henry and Zaffaroni \cite{HZ} for a survey on this topic).

Another important long-memory model is the fractional Gaussian noise (fGn).
To define fGn first consider the {\it fractional Brownian motion\/} (fBm)
$\{B(t):=B_H(t), t\in\mathbb{R}\}$ with Hurst index $H$, $0<H<1$,
defined to be a centered Gaussian $H$-self-similar process having stationary
increments, that is, $B_H(t)$ satisfies the following conditions:

(a) $\mathbb{E}[B(t)]=0$;

(b) $B(at)\stackrel{d}=a^HB(t)$ for any $a>0$ and $t\in\mathbb{R}$;

(c) $B(t+u)-B(u)\stackrel{d}=B(t)-B(0)$ for any $t,u\in\mathbb{R}$;

(d) the covariance function is given by
$${\rm Cov}(B(s),B(t))=\frac{\sigma_0^2}2\left[|t|^{2H}-|s|^{2H}-|t-s|^{2H}\right],$$
where $\sigma_0^2={\rm Var} B(1)$, and where the symbol $\stackrel{d}=$ stands for equality of the finite-dimensional distributions.
Then the increment process
$$\{X(k):= B_H(k+1)-B_H(k), k\in\mathbb{Z}\},$$
called {\it fractional Gaussian noise\/} (fGn), is a discrete-time centered Gaussian stationary process with covariance function
\beq
\label{MT1}
r(t))=\frac{\sigma_0^2}2\left[|k+1|^{2H}-|k|^{2H}-|k-1|^{2H}\right],
\q k\in\mathbb{Z}
\eeq
and spectral density function
\beq
\label{MT2}
f(\la)=c\, |1-e^{i\la}|^{2}\sum_{k=-\f}^\f|\la+2\pi k|^{-(2H+1)},
\q -\pi\le\la\le\pi,
\eeq
where $c$ is a positive constant.

It follows from (\ref{MT2}) that $f(\la)\thicksim c\, |\la|^{1-2H}$
as $\la\to 0$, that is, $f(\la)$ blows up if $H>1/2$
and tends to zero if $H<1/2$.
Also, comparing (\ref{MT0}) and (\ref{MT2}), we observe that,
up to a constant, the spectral density of fGn has the same
behavior at the origin as ARFIMA$(0,d,0)$ with $d=H-1/2.$

Thus, the fGn $\{X(k), k\in\mathbb{Z}\}$ has long-memory
if $1/2<H<1$ and is anti-percipient if $0<H<1/2$.
The variables  $X(k)$'s are independent if $H=1/2$.
For more details we refer to Samorodnisky and Taqqu \cite{ST}
and Taqqu \cite{Tq1}.

\vskip 2mm

{\bf Continuous-time long- memory and anti-persistent models}.
In the continuous context, a basic process which has
commonly been used to model long-range dependence is fractional
Brownian motion (fBm) $B_H$ with Hurst index $H$, defined above.
It can be regarded as Gaussian process having a spectral density:
\beq
\label{lr3}
f(\la)=c|\la|^{-(2H+1)}, \q c>0, \ \,\,
 0<H<1, \  \,\, \la\in\mathbb{R},
 \eeq
where (\ref{lr3}) can be understood in a generalized sense
since the fBm $B_H$ is a nonstationary process
(see, e.g.,  Anh et al. \cite{ALM}
and Gao et al. \cite{GAHT}).

A proper stationary model in lieu of fBm is the
{\it fractional Riesz-Bessel motion} (fRBm),
introduced in Anh et al. \cite{AAR}, and defined as a
continuous-time Gaussian process $X(t)$ with
spectral density
\beq
\label{rb1}
f(\la)=c\,|\la|^{-2\al}(1+\la^2)^{-\be},\q \la\in\mathbb{R},
\, 0<c<\f, \, 0<\al<1, \,\be>0.
\eeq

\n
The exponent $\al$ determines the long-range dependence,
while the exponent $\be$
indicates the second-order intermittency of the process
(see, e.g., \cite{ALM,GAHT}).

Notice that the process $X(t)$, specified by (\ref{rb1}),
is stationary if $0<\al<1/2$
and is non-stationary with stationary increments if $1/2\le\al<1.$
\n Observe also that the spectral density (\ref{rb1}) behaves
as $O(|\la|^{-2\al})$ as $|\la|\to 0$ and as $O(|\la|^{-2(\al+\be)})$
as $|\la|\to \f$.
Thus, under the conditions $0<\al<1/2$, $\be>0$ and $\al+\be>1/2,$
the function $f(\la)$ in (\ref{rb1}) is well-defined for both
$|\la|\to 0$ and  $|\la|\to \f$
due to the presence of the component $(1+\la^2)^{-\be}$, $\be>0$,
which is the Fourier transform of the Bessel potential.

Comparing (\ref{lr3}) and (\ref{rb1}), we observe that the
spectral density of fBm is the limiting case as $\be\to0$
that of fRBm with Hurst index $H=\al-1/2.$

\subsection{A Link Between Stationary Processes and the Trace Problem.}
\label{link}
As it was mentioned above, Toeplitz matrices and operators arise
naturally in the theory of stationary processes, and serve as tools,
to study many topics of the spectral and statistical
analysis of discrete- and continuous-time stationary processes.

To understand the relevance of the trace approximation problem
to stationary processes, consider a question concerning the asymptotic
distribution (as $T\to\f$) of the following Toeplitz type quadratic
functionals of a Gaussian stationary process
$\{X(u), \ u\in \mathbb{U}\}$ with spectral density
$f(\la)$, $\la\in \Lambda$ and covariance function
$r(t):=\widehat f(t)$, $t\in \mathbb{U}$ (here $\mathbb{U}$
and $\Lambda$ are as in Section \ref{model}):
\beq
\label{MTc-1}
Q_T:=
 \left \{
 \begin{array}{ll}
\int_0^T\int_0^T\widehat g(t-s)X(t)X(s)\,dt\,ds&
\mbox{in the continuous-time case}\\
\\
\sum_{k=1}^T\sum_{j=1}^T\widehat g(k-j)X(k)X(j) & 
\mbox{in the discrete-time case},
\end{array}
\right. \eeq
where
$
\widehat g(t)=\int_\Lambda e^{i\la t}\,g(\la)\,d\la,\,\,t\in \mathbb{U}
$
is the Fourier transform of some real, even, integrable
function $g(\la),$ $\la\in\Lambda$. We will refer $g(\la)$ as a
generating function for the functional $Q_T$.

The limit distributions of the functionals (\ref{MTc-1}) are
completely determined by the spectral density $f(\la)$ and the
generating function $g(\la)$, and depending on their properties
the limit distributions can be either Gaussian (that is, $Q_T$
with an appropriate normalization obeys central limit theorem),
or non-Gaussian.
The following two questions arise naturally:

\vskip1mm
a) Under what conditions on $f(\la)$ and $g(\la)$ will
the limits be Gaussian?

b) Describe the limit distributions, if they are non-Gaussian.

\vskip1mm
\n
These questions will be discussed in detail in Section \ref{CLT}.

Let $A_T(f)$ be the covariance matrix (or operator) of the process
$\{X(u), \ u\in \mathbb{U}\}$, that is, $A_T(f)$ denote
either the $T\times T$ Toeplitz matrix, or the $T$-truncated
Toeplitz operator generated by the spectral density $f$,
and let $A_T(g)$ denote either the $T\times T$ Toeplitz matrix,
or the $T$-truncated Toeplitz operator generated by the function $g$.

Our study of the asymptotic distribution of the quadratic
functionals (\ref{MTc-1}) is based on the following well--known
results (see, e.g., \cite{GS,I}):
\begin{itemize}
\item[1.]
The quadratic functional $Q_T$ in (\ref{MTc-1}) has the same
distribution as the sum $\sum_{k = 1}^\f \la_k^2\xi_k^2$
($\sum_{k = 1}^T \la_k^2\xi_k^2$ in the discrete-time case),
where $\{\xi_k, k\ge1\}$ are independent $N(0,1)$ Gaussian random variables
and $\{\la_k, k\ge1\}$ are the eigenvalues of the operator $A_T(f)A_T(g)$.
(Observe that the sets of non--zero eigenvalues of the operators
$A_T(f)A_T(g)$, $A_T(g)A_T(f)$ and $A_T^{1/2}(f)A_T(g)A_T^{1/2}(f)$
coincide, where $A_T^{1/2}(f)$ denotes the positive
definite square root of $A_T(f)$.

\item[2.]
The characteristic function $\I (t)$ of $Q_T$ is given by
\begin{equation}
\I (t) = \prod_{k = 1}^\f|1 - 2it\la_k|^{-1/2},
\end{equation}

\item[3.]
The $k$--th order cumulant $\chi_k(\cdot)$  of $Q_T$ is given by
\begin{eqnarray}
\label{MTc-5}
\chi_k(Q_T) = 2^{k-1}(k-1)! \sum_{j = 1}^\f \la_j^k
=2^{k-1}(k-1)!\, \tr\,[A_T(f)A_T(g)]^k
\end{eqnarray}
\end{itemize}

Thus, to describe the asymptotic distributions of the quadratic
functionals (\ref{MTc-1}), we have to control the corresponding
traces of the products of Toeplitz matrices(or operators), yielding
the trace approximation problem with generating functions specified
by (\ref{rim-1}).

\section{The Trace Problem for Toeplitz Matrices}
\label{ATM}

\n
Let $f(\la)$ be an integrable real symmetric function defined on
$\mathbb{T}=(-\pi, \pi]$. For  $T=1, 2,\ldots$ denote by $B_T(f)$
the $(T\times T)$ Toeplitz matrix generated by function $f$, that is,
\beq
\label{MT2-1}
B_T(f):=\|\widehat f(s-t)\|_{s,t=\overline{1,T}}
=\left (\begin{array}{llll}
\widehat f(0)&\widehat f(-1)&\ldots&\widehat f(1-T)\\
\widehat f(1)&\widehat f(0)&\ldots&\widehat f(2-T)\\
\ldots&\ldots&\ldots&\ldots\\
\widehat f(T-1)&\widehat f(T-2)&\ldots&\widehat f(0)
\end{array}\right),
\eeq
where
\beq
\label{FC}
\widehat f(t)=\int_\mathbb{T} e^{i\la t}\,f(\la)\,d\la,
\q t\in \mathbb{Z},
\eeq
are the Fourier coefficients of $f$.

Observe that 
\beq
\label{MT-00}
\frac1T\tr\left[B_T(f)\right]=\frac1T\cd T\widehat f(0)
= \inl f(\la)d\la.
\eeq
What happens when the matrix $B_T(f)$ is replaced by
a product of Toeplitz matrices? Observe that the product of
Toeplitz matrices is not a Toeplitz matrix.

The idea is to approximate the trace of the product of
Toeplitz matrices by the trace of a Toeplitz matrix generated
by the product of the generating functions. More precisely,
let $\mathcal{H}=\{h_1,h_2,\ldots,h_m\}$ be a collection of integrable
real symmetric functions defined on $\mathbb{T}$. Define
\bea
\label{im-07}
S_{B,\mathcal{H}}(T):=\frac1T\tr\left[\prod_{i=1}^m B_T(h_i)\right],\q M_{\mathbb{T},\mathcal{H}}:=(2\pi)^{m-1}\int_{-\pi}^{\pi}
\left[\prod_{i=1}^m h_i(\la)\right]\,d\la,
\eea
and let
\bea
\label{im-7}
\De(T):=\De_{B,\mathbb{T},\cal H}(T)=|S_{B,\mathcal{H}}(T)-
M_{\mathbb{T},\mathcal{H}}|.
\eea
Observe that by (\ref{MT-00})
\bea
\label{im-77}
M_{\mathbb{T},\mathcal{H}}=(2\pi)^{m-1}\int_{-\pi}^{\pi}
\left[\prod_{i=1}^m h_i(\la)\right]\,d\la
=\frac1T\tr \left[B_T\left(\prod_{i=1}^m h_i(\la)\right)\right].
\eea
How well is $S_{B,\mathcal{H}}(T)$ approximated by
$M_{\mathbb{T},\mathcal{H}}$? What is the rate of
convergence to zero of approximation error
$\De_{B,\mathbb{T},\cal H}(T)$ as $T\to\f$?
These are Problems (A) and (B).

\subsection{Problem (A) for Toeplitz Matrices}

Recall that Problem (A) involves finding conditions on the
functions $h_1(\la)$, $h_2(\la)$, $\ldots$, $h_m(\la)$ in
(\ref{im-07}) such that
$\De_{B,\mathbb{T},\cal H}(T)=o(1)$ as $T\to\f$.

In Theorem \ref{T1} and Remark \ref{rem2-2} we summarize
the results concerning Problems (A) for Toeplitz matrices
in the case where the exponents $\tau_k, \, k=\overline{1, m}$
(see (\ref{im-1})) are all equal to 1 as in (\ref{im-07}).
\n
\begin{thm}
\label{T1}
Let $\De_{B,\mathbb{T},\cal H}(T)$ be as in (\ref{im-7}).
Each of the following conditions is sufficient for
\beq
\label{in-5}
\De_{B,\mathbb{T},\cal H}(T)=o(1) \q {\rm as} \q T\to\f.
\eeq
\begin{itemize}

\item[{\bf (A1)}]
$h_i\in L^{p_i}(\mathbb{T})$, $1\le p_i\le\f$,
$i=\overline{1, m}$, with $1/p_1+\ldots+1/p_m\le 1$.

\item[{\bf (A2)}]
The function $\varphi({\bf u})$ given by
\beq
\label{in-6}
\varphi({\bf u}):\,=\,
\inl h_1(\la)h_2(\la-u_1)h_3(\la-u_2)\cdots h_m(\la-u_{m-1})\,d\la,
\eeq
where ${\bf u}=(u_1,u_2,\ldots,u_{m-1})\in \mathbb{R}^{m-1}$,
belongs to $L^{m-2}(\mathbb{T}^{m-1})$ and is continuous at
${\bf 0}=(0,0,\ldots,0)$.
\end{itemize}
\end{thm}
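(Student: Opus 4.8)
The plan is to reduce the difference $S_{B,\mathcal H}(T)-M_{\mathbb T,\mathcal H}$ to a single integral against an explicit kernel, and then to exploit the continuity of $\varphi$ at $\mathbf 0$. First I would write the trace in Fourier form: cyclically ($s_{m+1}=s_1$),
$$\tr\Big[\prod_{i=1}^m B_T(h_i)\Big]=\sum_{s_1,\dots,s_m=1}^T\prod_{i=1}^m\widehat h_i(s_i-s_{i+1}),$$
and inserting the representation (\ref{FC}) $\widehat h_i(s_i-s_{i+1})=\int_{\mathbb T}e^{i\la_i(s_i-s_{i+1})}h_i(\la_i)\,d\la_i$ turns each summation over $s_i$ into a Dirichlet kernel $D_T(\mu):=\sum_{s=1}^Te^{is\mu}$. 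After the linear (unit-Jacobian) substitution $\la=\la_1$, $\la_{i+1}=\la-u_i$ for $i=\overline{1,m-1}$, the inner $\la$-integral produces exactly the function $\varphi(\mathbf u)$ of (\ref{in-6}), leaving
$$S_{B,\mathcal H}(T)=\frac1T\int_{\mathbb T^{m-1}}\varphi(\mathbf u)\,K_T(\mathbf u)\,d\mathbf u,\qquad K_T(\mathbf u)=D_T(-u_1)\,D_T(u_{m-1})\prod_{j=1}^{m-2}D_T(u_j-u_{j+1}).$$

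The second step is a normalization identity. Expanding all $m$ Dirichlet factors and integrating over $\mathbb T^{m-1}$, orthogonality forces the $m$ summation indices to coincide, whence $\frac1T\int_{\mathbb T^{m-1}}K_T(\mathbf u)\,d\mathbf u=(2\pi)^{m-1}$ \emph{exactly}. Since $(2\pi)^{m-1}\varphi(\mathbf 0)=M_{\mathbb T,\mathcal H}$, this yields the clean representation $S_{B,\mathcal H}(T)-M_{\mathbb T,\mathcal H}=\frac1T\int_{\mathbb T^{m-1}}[\varphi(\mathbf u)-\varphi(\mathbf 0)]\,K_T(\mathbf u)\,d\mathbf u$, so that (\ref{in-5}) reduces to showing this integral tends to $0$. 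I would then estimate it by splitting $\mathbb T^{m-1}$ into a small ball $\{|\mathbf u|<\de\}$ and its complement: on the ball $|\varphi(\mathbf u)-\varphi(\mathbf 0)|$ is small by continuity at $\mathbf 0$, while off the ball one writes $\psi:=\varphi-\varphi(\mathbf 0)\in L^{m-2}(\mathbb T^{m-1})$ and pairs it, via H\"older with conjugate exponents $(m-2,(m-2)/(m-3))$, against $\frac1T K_T$ (for $m=2$ the kernel is just $\frac1T|D_T|^2$, the positive Fej\'er kernel, and the statement is classical).

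The main obstacle is precisely this off-ball estimate, and it is where condition (A2) is used in full. The kernel $\frac1T K_T$ is \emph{not} a positive approximate identity: the $D_T$ oscillate and change sign, and a direct computation shows $\frac1T\int|K_T|\sim C(\log T)^{m-2}$, so bounding by absolute values fails. One therefore needs a sharp $L^p$ estimate for the chained product of $m$ Dirichlet kernels over $\mathbb T^{m-1}$, simultaneously capturing the concentration near $\mathbf 0$ and the cancellation in the oscillatory tail; I expect this to be isolated as a technical lemma, proved by integrating out the variables one at a time using $\|D_T\|_{L^p(\mathbb T)}\sim c_pT^{1-1/p}$ for $p>1$ together with Cauchy--Schwarz on the two factors sharing each variable.

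Finally, for (A1) I would argue separately, since the continuity of $\varphi$ needed for (A2) can fail when several exponents equal $\infty$. Here H\"older's inequality for Schatten norms gives $|\tr[\prod B_T(h_i)]|\le\prod\|B_T(h_i)\|_{S_{p_i}}$, and the bound $\|B_T(h)\|_{S_p}\le C\,T^{1/p}\|h\|_{L^p}$ --- obtained by interpolating the elementary endpoints $\|B_T(h)\|_{S_1}\le T\|h\|_{L^1}$ and $\|B_T(h)\|_{S_\infty}\le C\|h\|_{L^\infty}$ --- shows, after a telescoping difference and because $1/p_1+\cdots+1/p_m\le1$ (which by the finite-measure embedding $L^p\subset L^q$ we may take to be an equality), that $S_{B,\mathcal H}(T)$ depends on the $h_i$ continuously and \emph{uniformly in} $T$ in the $L^{p_i}$-norms. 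Approximating each $h_i$ by a continuous function, for which (\ref{in-5}) follows from the already-established case (A2), then yields (A1); the one point needing care is the presence of factors with $p_i=\infty$, which are kept fixed (using the uniform operator-norm bound) while only the remaining factors are approximated --- possible precisely because the exponent sum does not exceed $1$.
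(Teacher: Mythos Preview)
Your kernel representation and the exact normalization $\frac1T\int_{\mathbb T^{m-1}}K_T=(2\pi)^{m-1}$ are correct and match the paper's route (the paper works with the equivalent kernel $\Phi_T(\mathbf u)=\frac1{(2\pi)^{m-1}T}\prod_{j=1}^{m-1}D_T(u_j)\,D_T(\sum u_j)$, related to your $K_T$ by a linear change of variables). But the off-ball estimate in your argument for \textbf{(A2)} does not close. The $L^{p(m)}$-norm of $\frac1T K_T$ over $\{|\mathbf u|\ge\delta\}$, with $p(m)=\frac{m-2}{m-3}$, is only \emph{bounded} uniformly in $T$ (this is the paper's Lemma~\ref{lem73}\,d)), not $o(1)$; hence your H\"older pairing yields at best $\|\psi\|_{L^{m-2}}\cdot C_\delta$, a fixed positive number, and the total bound $\varepsilon+C_\delta\|\psi\|_{m-2}$ cannot be made small. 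The paper supplies the missing idea: after fixing $\delta$ so the on-ball piece is $<\varepsilon$, decompose $\Psi=\Psi_1+\Psi_2$ with $\|\Psi_1\|_{m-2}<\varepsilon/C_\delta^{1/p(m)}$ and $\Psi_2\in L^\infty$; then the $\Psi_1$-part is handled by your H\"older bound, while the $\Psi_2$-part uses the $L^1$-concentration $\int_{\{|\mathbf u|\ge\delta\}}|\Phi_T|\to0$ (Lemma~\ref{lem73}\,c)). Both kernel properties are needed. Incidentally, your motivating claim that $\frac1T\int|K_T|\sim C(\log T)^{m-2}$ is false: a nontrivial but classical fact (Lemma~\ref{lem73}\,a), going back to Bentkus) is that $\sup_T\frac1T\int_{\mathbb T^{m-1}}|K_T|<\infty$; so the on-ball part is in fact handled by crude absolute values, and the real difficulty is that $\psi$ is only in $L^{m-2}$, not $L^\infty$.

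For \textbf{(A1)} you take a genuinely different route from the paper. The paper simply verifies that under (A1) the function $\varphi$ of (\ref{in-6}) lies in $L^{m-2}(\mathbb T^{m-1})$ and is continuous at $\mathbf 0$ (H\"older gives $\varphi\in L^\infty$, $h_i\in L^1$ gives $\varphi\in L^1$, and continuity at $\mathbf 0$ is checked by the case split $p_i<\infty$ versus $\sum1/p_i<1$ versus some $p_i=\infty$ with equality), thereby reducing (A1) to (A2). Your Schatten-norm argument---interpolating $\|B_T(h)\|_{S_p}\le CT^{1/p}\|h\|_{L^p}$, telescoping to get uniform-in-$T$ continuity of $S_{B,\mathcal H}(T)$ in the $L^{p_i}$-norms, and approximating the finite-$p_i$ factors by bounded functions while freezing the $p_i=\infty$ factors---is essentially Avram's original approach and is valid. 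It has the virtue of bypassing (A2) entirely and working directly at the operator level; the paper's route is shorter once (A2) is in hand and keeps everything on the Fourier side.
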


\begin{rem}
\label{rem2-1}
{\rm Assertion (A1) was proved by Avram \cite{A}.
For the special case $p_i=\f$, $i=\overline{1, m}$, that is,
when all $h_i$ are bounded functions, it was first established
by Grenander and Szeg\"o (\cite{GS}, Sec. 7.4).
For $m=4$; $p_1=p_3=2$; $p_2=p_4=\f$,
(A1) was proved by Ibragimov \cite{I} and Rosenblatt \cite{R2}.

Assertion (A2), for $m=4$, $h_1=h_3:=f$ and $h_2=h_4:=g$
was proved in Ginovyan and Sahakyan \cite{GS1}.}
\end{rem}

\begin{rem}
\label{rem2-2}
{\rm For the special case $m=4$, $h_1=h_3:=f$ and $h_2=h_4:=g$,
in Giraitis and Surgailis \cite{GSu}
(see also Giraitis et al. \cite{GKSu}),
and in Ginovyan and Sahakyan \cite{GS1}, it was proved that the
following conditions are also sufficient for (\ref{in-5}):

\n {\bf (A3)} (Giraitis and Surgailis \cite{GSu}).
$f\in  L^2(\mathbb{T})$, \,$g\in  L^2(\mathbb{T})$,
$fg\in L^2(\mathbb{T})$ and
$$\inl
 f^2(\la)g^2(\la-\mu)\,d\la \longrightarrow
 \inl f^2(\la)g^2(\la)\,d\la \quad {\rm as} \quad \mu\to0.
$$
\n{\bf (A4)} (Ginovyan and Sahakyan \cite{GS1}).
The functions $f$ and $g$ satisfy
$$
f(\lambda)\le |\lambda|^{-\alpha}L_1(\lambda) \quad \text{and}\quad
|g(\lambda)|\le |\lambda|^{-\beta}L_2(\lambda)\quad \text{for}\quad
\lambda\in [-\pi, \pi],
$$
for some $\alpha<1,\ \beta<1$ with  $\alpha+\beta\le1/2,$
and $L_i\in SV(\mathbb{R})$,
$\ \lambda^{-(\alpha+\beta)}L_i(\lambda)\in L^2(\mathbb{T}),\ \ i=1,2$,
where $SV(\mathbb{R})$ is the class of slowly varying
at zero functions $u(\lambda)$, $\lambda\in\mathbb{R}$,
namely
$\lim_{a\to0}\frac{u(a\lambda)}{u(\lambda)}=1$ for all $a>0$,
satisfying also
$u(\lambda)\in L^\infty(\mathbb{R}),$\
$\lim_{\lambda\to0}u(\lambda)=0,$ \
$u(\lambda)=u(-\lambda)$ and $0<u(\lambda)<u(\mu)$\ for\ $0<\lambda<\mu.$}
\end{rem}

\begin{rem}
\label{MTrem1}
{\rm Case (A4), with $\alpha+\beta<1/2$, was first obtained by Fox and Taqqu \cite{FT1}}.
\end{rem}

\begin{rem}
\label{rem2-12}
{\rm It would be of interest to extend the results of (A3) and (A4) 
to arbitrary $m>4$.}
\end{rem}

We now consider the case when the product in (\ref{im-1})
involves also {\em inverse matrices}, that is,
$\tau_k=(-1)^k, \, k=\overline{1, m}$.
We assume that $m=2\nu$, and the functions from the collection
$\mathcal{H}=\{h_1,h_2,\ldots,h_m\}$ that involve Toeplitz matrices
we denote by $g_i$, $i=\overline{1, \nu}$, while those involving
inverse Toeplitz matrices we denote by $f_i$, $i=\overline{1, \nu}$.
We set
\bea
\label{iim-07}
&&SI_{B,\mathcal{H}}(T):=
\frac1T\tr\left[\prod_{i=1}^\nu[B_T(f_i)]^{-1}B_T(g_i)\right],\\
\label{iim-77}
&&MI_{\mathbb{T},\mathcal{H}}:= 
\frac1{2\pi}\int_{-\pi}^{\pi} \left[\prod_{i=1}^\nu
\frac{g_i(\la)}{f_i(\la)}\right]\,d\la,\\
\label{iim-7}
&&\De I_{B,\mathbb{T},\mathcal{H}}(T):=
|SI_{B,\mathcal{H}}(T)-MI_{\mathbb{T},\mathcal{H}}|.
\eea

The following theorem was proved by Dahlhaus (see \cite{Dah1}, Theorem 5.1).
\begin{thm}
\label{T2}
Let $\nu\in\mathbb{N}$, and $\alpha, \beta \in\mathbb{R}$
with $0<\alpha, \beta<1$ and $\nu(\beta-\alpha)<1/2$.
Suppose $f_i(\la)$ and $g_i(\la)$, $i=\overline{1, \nu}$, are symmetric
real valued functions satisfying the conditions:
\begin{itemize}
\item[{\rm (C1)}]
$f_i(\la)$, $i=\overline{1, \nu}$, are nonnegative and continuous 
at all $\la\in \mathbb{T}\setminus\{0\}$, \
$f_i^{-1}(\la)$ are continuous at all $\la\in \mathbb{T}$, and
$f_i(\lambda)=O\left( |\lambda|^{-\alpha-\de}\right)$
as $\la\to0$ for all $\de>0$ and  $i=\overline{1, \nu}$;
\item[{\rm (C2)}]
$\partial/(\partial\la) f_i^{-1}(\la)$ and
$\partial^2/(\partial\la)^2 f_i^{-1}(\la)$
are continuous at all $\la\in \mathbb{T}\setminus\{0\}$,
and for all $\de>0$ and  $i=\overline{1, \nu}$
$$
\partial^k /(\partial\la)^k f_i^{-1}(\la)=
O\left( |\lambda|^{-\alpha-k-\de}\right) \q {\rm for} \q k=0,1.
$$
\item[{\rm (C3)}]
$g_i(\la)$ are continuous at all $\la\in \mathbb{T}\setminus\{0\}$ and
$g_i(\lambda)=O\left( |\lambda|^{-\beta-\de}\right)$ as $\la\to0$ for all $\de>0$
and $i=\overline{1, \nu}$.
\end{itemize}
Then
\beqq
\De I_{B,\mathbb{T},\mathcal{H}}(T)=o(1) \q {\rm as} \q T\to\f.
\eeqq
\end{thm}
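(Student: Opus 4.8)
The plan is to reduce the problem, which involves inverse Toeplitz matrices, to the already-understood trace approximation for a \emph{product of ordinary Toeplitz matrices} treated in Section~\ref{ATM}. The reduction rests on replacing each factor $[B_T(f_i)]^{-1}B_T(g_i)$ by a single Toeplitz matrix generated, up to a normalizing constant, by the quotient $g_i/f_i$. As a preliminary observation, condition (C1) forces $f_i^{-1}$ to be continuous on the compact set $\mathbb{T}$, hence bounded, so $f_i$ is bounded away from zero and the smallest eigenvalue of $B_T(f_i)$ is bounded below uniformly in $T$ (the singularity of $f_i$ at the origin only enlarges the \emph{largest} eigenvalue). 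Consequently the inverse matrices satisfy $\|[B_T(f_i)]^{-1}\|\le C$ uniformly in $T$, which is what makes the subsequent error propagation feasible.

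The core of the argument is a lemma approximating the inverse: $[B_T(f_i)]^{-1}$ is close to $(2\pi)^{-2}B_T(1/f_i)$. The mechanism is that in the doubly infinite (Laurent) setting $B(f_i)B(1/f_i)=(2\pi)^2 I$, so for the truncated matrices $B_T(f_i)B_T(1/f_i)=(2\pi)^2 I - E_T^{(i)}$, where $E_T^{(i)}$ collects the ``corner'' contributions of the convolution that are cut off by truncation. Writing $[B_T(f_i)]^{-1}=(2\pi)^{-2}B_T(1/f_i)(I-(2\pi)^{-2}E_T^{(i)})^{-1}$, the task becomes an estimate on $E_T^{(i)}$. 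This is where conditions (C1)--(C2) are used: they control the decay of the Fourier coefficients of $f_i$ and the smoothness (first and second derivatives) of $f_i^{-1}$, and hence the decay of the Fourier coefficients of $f_i^{-1}$. I would combine these to bound the relevant Hilbert--Schmidt norm of the error matrix, so that the inverse-approximation error, measured in the norm feeding into the telescoping estimate below, is $o(T^{1/2})$.

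With the inverse approximation in hand, I would treat $[B_T(f_i)]^{-1}B_T(g_i)$ as a single unit and approximate it by $(2\pi)^{-1}B_T(g_i/f_i)$, combining the inverse replacement with the product rule $B_T(u)B_T(v)\approx 2\pi B_T(uv)$; each such unit is then, up to a constant, a Toeplitz matrix with the milder symbol $g_i/f_i$, which behaves like $|\lambda|^{-(\beta-\alpha)}$ at the origin. Writing $P=\prod_{i=1}^\nu[B_T(f_i)]^{-1}B_T(g_i)$ and $\tilde P=(2\pi)^{-\nu}\prod_{i=1}^\nu B_T(g_i/f_i)$, I would telescope $P-\tilde P$ into a sum of terms, each isolating one unit-approximation error between factors of controlled size, and bound $\tfrac1T|\tr(P-\tilde P)|$ by a Schatten-norm H\"older inequality, placing the error factors in the Hilbert--Schmidt norm and the remaining Toeplitz factors in the operator norm. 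The Hilbert--Schmidt norm of a Toeplitz matrix with an $L^2$ symbol scales like $T^{1/2}$, while the operator norm of $B_T(g_i/f_i)$ scales like $T^{(\beta-\alpha)+\delta}$; balancing these against the normalization $1/T$ is exactly what requires the combined singularity budget $\nu(\beta-\alpha)$ to stay below $1/2$. Finally, $\tfrac1T\tr\tilde P=(2\pi)^{-\nu}\,\tfrac1T\tr\prod_{i=1}^\nu B_T(g_i/f_i)$ is the normalized trace of a product of $\nu$ ordinary Toeplitz matrices with integrable symbols (integrability of $\prod g_i/f_i\sim|\lambda|^{-\nu(\beta-\alpha)}$ follows from $\nu(\beta-\alpha)<1$), to which the results of Section~\ref{ATM} apply and yield the limit $(2\pi)^{-\nu}(2\pi)^{\nu-1}\int_{-\pi}^{\pi}\prod_{i=1}^\nu (g_i/f_i)\,d\lambda=MI_{\mathbb{T},\mathcal{H}}$, the normalizing constants collapsing exactly as claimed; condition (C3) supplies the singularity order $\beta$ of $g_i$ needed throughout.

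I expect the main obstacle to be the inverse-approximation lemma of the second paragraph: obtaining a bound on the truncation-error matrix $E_T^{(i)}$ that is sharp enough when $f_i$ carries a genuine singularity at the origin. The corner terms pair Fourier coefficients of $f_i$ (which decay only like $|n|^{\alpha-1}$) with those of $f_i^{-1}$ (decaying like $|n|^{-\alpha-1}$), and the delicate point is to show that the resulting double sums, after normalization by $1/T$ and after absorbing the polynomial growth of the $B_T(g_i)$ factors, still vanish. The smoothness requirements (C2) on $f_i^{-1}$ and its first two derivatives are exactly what supply the decay needed to close this estimate, and the appearance of the threshold $1/2$ (rather than the naive integrability threshold $1$) reflects that it is the \emph{error} terms, estimated on the Hilbert--Schmidt scale, and not merely the limiting integral, that must be controlled.
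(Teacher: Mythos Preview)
The paper does not supply its own proof of this theorem; it is quoted as a result of Dahlhaus, with a bare citation to \cite{Dah1}, Theorem~5.1. So there is no in-paper argument to compare against. That said, your outline is essentially the strategy Dahlhaus uses in the cited reference: replace each $[B_T(f_i)]^{-1}$ by $(2\pi)^{-2}B_T(f_i^{-1})$, telescope the product, and control the resulting error terms by Schatten-norm H\"older inequalities, with conditions (C1)--(C2) feeding the decay needed for the inverse-approximation step and (C3) supplying the singularity order of the $g_i$. You have also correctly located the technical crux (the inverse-approximation lemma) and given the right interpretation of the threshold $\nu(\beta-\alpha)<1/2$ as a Hilbert--Schmidt-scale budget rather than a mere integrability condition.

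One place where your bookkeeping is looser than what Dahlhaus actually does: in the telescoped difference $P-\tilde P$, some of the surviving factors are still of the form $[B_T(f_j)]^{-1}B_T(g_j)$, and bounding their operator norm by $T^{(\beta-\alpha)+\delta}$ is \emph{not} immediate from the crude product bound $\|[B_T(f_j)]^{-1}\|_\infty\cdot\|B_T(g_j)\|_\infty\le C\,T^{\beta+\delta}$; Dahlhaus proves this sharper estimate separately. Similarly, the claim that the unit-replacement error is merely $o(T^{1/2})$ in Hilbert--Schmidt norm does not close the estimate for $\nu\ge 3$: combined with $\nu-2$ operator-norm factors of size $T^{(\beta-\alpha)+\delta}$ and one HS factor of size $T^{1/2}$, the normalized trace bound becomes $o(T^{(\nu-2)(\beta-\alpha)+\delta})$, which need not tend to zero. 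Dahlhaus obtains a quantitatively stronger error bound (polynomial in $T$ with an exponent depending on $\alpha$), and it is the interplay of that bound with the $T^{(\beta-\alpha)}$ factors that makes the condition $\nu(\beta-\alpha)<1/2$ exactly sufficient. These are matters of execution within the same overall architecture you describe, not a different approach.
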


\subsection{Problem (B) for Toeplitz Matrices}
\label{BTM}
Recall that Problem (B) involves finding conditions on the
functions $h_1(\la)$, $h_2(\la)$, $\ldots$, $h_m(\la)$ in
(\ref{im-07}) such that
$\De_{B,\mathbb{T},\cal H}(T)=O(T^{-\g})$
as $T\to\f$ for some $\g>0$.

In Theorem \ref{T3} below we summarize the results concerning
Problem (B) for Toeplitz matrices in the case where
$\tau_k=1, \, k=\overline{1, m}$.
First we introduce some classes of functions
(see, e.g.,  \cite{BN,Ta,TK}). Recall that $\mathbb{T}=(-\pi,\pi]$
and denote
\beq
\label{m-1}
\mathcal{F}_1(\mathbb{T}):= \left\{f\in L^1(\mathbb{T}): \, \sum_{k=-\f}^\f|k||
\widehat f(k)|<\f; \right\},
\eeq
where $\widehat f(k)=\int_\mathbb{T} e^{i\la k}\,f(\la)\,d\la,$ \
$\widehat f(-k)=\widehat f(k)$, and
\beq
\label{m-2}
\mathcal{F}_2(\mathbb{T})=\mathcal{F}_{ARMA}(\mathbb{T}):=
\left\{f: \, f(\la)=\frac{\si^2}{2\pi}\left|\frac{A(e^{i\la})}{B(e^{i\la})}
\right|^2\right\},
\eeq
where $0<\sigma^2<\f$, $A(z):=\sum_{k=0}^qa_kz^k$ ($p\in \mathbb{N}$) and
$B(z):=\sum_{k=0}^pb_kz^k$ ($q\in \mathbb{N}$)
are both bounded away from zero for $|z|\le1.$
\begin{rem}
\label{rem3-1}
{\rm The following implications were established in \cite{Ta}:
\begin{itemize}
\item[(a)]
If $f_1, f_2 \in \mathcal{F}_1(\mathbb{T})$, then
$f_1f_2 \in \mathcal{F}_1(\mathbb{T})$.
\item[(b)]
If $f\in \mathcal{F}_2(\mathbb{T})$, then $f \in \mathcal{F}_{1}(\mathbb{T})$
and $f^{-1} \in \mathcal{F}_2(\mathbb{T})$.
\end{itemize}}
\end{rem}

For $\psi\in L^p(\mathbb{T})$, $1\le p\le\infty$, let
$\omega_p(\psi,\delta)$ denote the $L^p$--modulus of continuity of $\psi$:
$$
\omega_p(\psi,\delta) :=\sup_{0<h\le \delta}\|\psi(\cd+h)-\psi(\cd)\|_p,
\quad \delta>0.
$$
\begin{den}
{\rm
Given numbers $0 < \g \le 1$ and $1\le p\le\infty$, we denote by
$\Lip (\mathbb{T}; p, \g)$ the $L^p$-Lipschitz
class of functions defined on $\mathbb{T}$ (see, e.g., \cite{BN}):
$$\Lip (\mathbb{T};p, \g) = \{\psi(\la)\in L^p(\mathbb{T});
\q \om_p(\psi;\de) = O(\de ^\g),
\q \de \to 0\}.$$
}
\end{den}
Observe that if $\psi\in\Lip (p, \g)$, then there exists a
constant $C$ such that $\om_p(\psi;\de)\le C\,\de ^\g$ for all
$\de>0$.

\begin{thm}
\label{T3}
Assume that $\tau_k=1, \, k=\overline{1, m}$,
$\mathcal{H}=\{h_1,h_2,\ldots,h_m\}$,
and let $\De_{B,\mathbb{T},\cal H}(T)$ be as in (\ref{im-7}).
The following assertions hold:
\begin{itemize}
\item[{\bf (B1)}]
If $h_i \in \mathcal{F}_1(\mathbb{T})$, $i=\overline{1, m}$, then
\beqq
\De_{B,\mathbb{T},\cal H}(T)=O(T^{-1}) \q  {\rm as }\ T\to\f.
\eeqq

\item[{\bf (B2)}]
If the functions $h_i(\la)$, $i=\overline{1, m}$,  have uniformly
bounded derivatives
on $\mathbb{T}:=(-\pi,\pi]$, then for any \ $\epsilon>0$
\begin{equation*}
\De_{B,\mathbb{T},\cal H}(T)=O(T^{-1+\epsilon}) \q  as \,\, T\to\f.
\end{equation*}

\item[{\bf (B3)}]
Assume that the function $\varphi({\bf u})$ given by (\ref{in-6})
with some constants $C>0$ and $\g\in(0,1]$ satisfies
$$
 |\varphi ({\bf
u})-\varphi ({\bf 0})|\leq C|{\bf u}|^\g, \q
{\bf u}=(u_1,u_2,\ldots,u_{m-1})\in \mathbb{T}^{m-1},
$$
 where
${\bf 0}=(0,0,\ldots,0)$ and $|{\bf u}|=|u_1|+ |u_2| +\cdots+|u_{m-1}|$.
Then for any $\varepsilon >0$
\beq
\label{t2}
\De_{B,\mathbb{T},\cal H}(T)=O\left(T^{-\g+\varepsilon}\right)
\quad\text{as}\quad T\to\infty. \eeq

\item[{\bf (B4)}]
Let $h_i(\la)\in\Lip(\mathbb{T}; p_i, \g)$, $p_i>1$, $i=1,2,\ldots,m$,
$1/p_1+\cdots+1/p_m\leq 1$ and $\g \in (0,1]$.
Then (\ref{t2}) holds for any $\varepsilon >0$.

\item[{\bf (B5)}]
Let $h_i(\lambda)$, $i=1,2,\ldots,m$, be differentiable functions
defined on $\mathbb{T}\setminus\{0\}$, such that for some constants
$C_{1i}>0$, $C_{2i}>0$, and
$\alpha_i$, $i=1,2,\ldots,m$, satisfying $0<\alpha_i<1,$ \
$\alpha:=\sum_{i=1}^m\alpha_i<1$
\begin{equation*}
|h_i(\lambda)|\le C_{1i}|\lambda|^{-\alpha_i},\quad
|h_i^\prime(\lambda)|\le C_{2i}|\lambda|^{-(\alpha_i+1)},\quad
\lambda\in \mathbb{T}\setminus\{0\}, \,\, i=\ol{1,m}.
\end{equation*}
Then (\ref{t2}) holds for any $\varepsilon >0$ with
\begin{equation}
\label{d2}
\gamma =\frac1m(1-\alpha).
\end{equation}
\end{itemize}
\end{thm}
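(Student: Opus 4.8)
\n
The plan is to reduce all five assertions to one integral identity for $\De_{B,\mathbb{T},\mathcal H}(T)$ together with a single kernel estimate. Assertion (B3) will be the master statement, (B2) is a special case of (B4), and (B4), (B5) follow once their hypotheses are shown to force the H\"older bound on $\varphi$ demanded in (B3); only (B1), which yields the sharper rate $O(T^{-1})$ with no $\varepsilon$--loss, needs a separate Fourier--coefficient argument. \textbf{Master identity.} I would expand $\tr\left[\prod_{i=1}^m B_T(h_i)\right]=\sum_{t_1,\dots,t_m=1}^T\prod_{i=1}^m\widehat{h_i}(t_i-t_{i+1})$ (indices cyclic, $t_{m+1}=t_1$) and insert $\widehat{h_i}(t)=\int_{\mathbb T}e^{i\lambda t}h_i(\lambda)\,d\lambda$, so that the summation over $t_1,\dots,t_m$ factors into a cyclic product of Dirichlet kernels $D_T(\theta):=\sum_{t=1}^T e^{it\theta}$. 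After the shear substitution $\lambda_j=\lambda-u_{j-1}$ with $u_0:=0$, which renders the kernel independent of $\lambda$, integration in $\lambda$ reproduces exactly the function $\varphi(\mathbf u)$ of (\ref{in-6}) and gives
$$S_{B,\mathcal H}(T)=\frac1T\int_{\mathbb T^{m-1}}\varphi(\mathbf u)\,K_T(\mathbf u)\,d\mathbf u,$$
where $K_T$ is the product of the $m$ Dirichlet factors evaluated at the consecutive differences of the $u_j$'s. Testing against constant functions gives $\frac1T\int_{\mathbb T^{m-1}}K_T(\mathbf u)\,d\mathbf u=(2\pi)^{m-1}$, so, recalling $M_{\mathbb T,\mathcal H}=(2\pi)^{m-1}\varphi(\mathbf 0)$ from (\ref{im-77}),
$$\De_{B,\mathbb{T},\mathcal H}(T)=\frac1T\left|\int_{\mathbb T^{m-1}}\bigl[\varphi(\mathbf u)-\varphi(\mathbf 0)\bigr]K_T(\mathbf u)\,d\mathbf u\right|.$$

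\n
\textbf{Proof of (B3).} Under $|\varphi(\mathbf u)-\varphi(\mathbf 0)|\le C|\mathbf u|^\gamma$ the identity above gives $\De_{B,\mathbb T,\mathcal H}(T)\le \frac{C}{T}\int_{\mathbb T^{m-1}}|\mathbf u|^\gamma|K_T(\mathbf u)|\,d\mathbf u$, so everything reduces to the bound
$$\frac1T\int_{\mathbb T^{m-1}}|\mathbf u|^\gamma\,|K_T(\mathbf u)|\,d\mathbf u=O\!\left(T^{-\gamma+\varepsilon}\right).$$
I would prove this from the elementary estimate $|D_T(\theta)|\le\min\{T,\pi/|\theta|\}$, partitioning $\mathbb T^{m-1}$ according to which of the consecutive differences have size $\lesssim 1/T$: on each piece the weight $|\mathbf u|^\gamma$ supplies the decay $T^{-\gamma}$, while the $L^1$--size $\sim\log T$ of the surviving Dirichlet factors produces the arbitrarily small loss $\varepsilon$. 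This kernel estimate, with its bookkeeping over the cyclic product of $m$ coupled Dirichlet kernels, is the analytic heart of the theorem and the step I expect to be the main obstacle.

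\n
\textbf{(B2) and (B4).} (B2) is the case $p_i=\infty$, $\gamma=1$ of (B4), since a uniformly bounded derivative means $h_i\in\Lip(\mathbb T;\infty,1)$. For (B4) I would show the Lipschitz hypotheses force the bound needed in (B3): telescoping
$$\prod_{i=1}^m h_i(\lambda-u_{i-1})-\prod_{i=1}^m h_i(\lambda)=\sum_{k=1}^m\Bigl[\prod_{i<k}h_i(\lambda-u_{i-1})\Bigr]\bigl[h_k(\lambda-u_{k-1})-h_k(\lambda)\bigr]\Bigl[\prod_{i>k}h_i(\lambda)\Bigr],$$
integrating in $\lambda$, and applying H\"older's inequality with exponents $p_1,\dots,p_m$ (the slack in $\sum 1/p_i\le1$ absorbed by the finite measure of $\mathbb T$) yields $|\varphi(\mathbf u)-\varphi(\mathbf 0)|\le\sum_{k=1}^m\omega_{p_k}(h_k,|u_{k-1}|)\prod_{i\ne k}\|h_i\|_{p_i}\le C|\mathbf u|^\gamma$, after which (B3) applies with the same $\gamma$.

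\n
\textbf{(B5) and (B1).} For (B5) I would reduce to (B4) by verifying $h_i\in\Lip(\mathbb T;p_i,\gamma_i)$ with $\gamma_i=1/p_i-\alpha_i$: splitting $\|h_i(\cdot-t)-h_i(\cdot)\|_{p_i}$ at $|\lambda|=2|t|$ and using $|h_i(\lambda)|\le C|\lambda|^{-\alpha_i}$ on the inner range and $|h_i'(\lambda)|\le C|\lambda|^{-(\alpha_i+1)}$ on the outer one gives $\omega_{p_i}(h_i,\delta)\le C\delta^{1/p_i-\alpha_i}$. To invoke (B4) one needs a common exponent $\gamma=1/p_i-\alpha_i$ for all $i$ together with $\sum_i1/p_i\le1$; since $\sum_i1/p_i=m\gamma+\alpha$, the optimal choice is $m\gamma+\alpha=1$, i.e. $\gamma=\tfrac1m(1-\alpha)$, which is precisely (\ref{d2}) (and $\alpha<1$ guarantees $p_i>1$). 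Finally (B1) is handled separately: since $\mathcal F_1(\mathbb T)$ is closed under products (Remark \ref{rem3-1}(a)), $\prod_i h_i\in\mathcal F_1(\mathbb T)$, and the exact Fourier expansion of $\tr\left[\prod_i B_T(h_i)\right]$ differs from its diagonal value $\tr\left[B_T(\prod_i h_i)\right]$ only through ``defect'' weights $|k|/T$ and Fourier tails $\sum_{|k|\ge T}$, both controlled by $\sum_k|k|\,|\widehat{(\prod_i h_i)}(k)|<\infty$; this produces the sharp $O(T^{-1})$ with no $\varepsilon$--loss.
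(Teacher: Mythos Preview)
Your overall architecture is exactly the paper's: (B3) is the pivot; (B4) reduces to (B3) via the telescoping/H\"older argument you wrote (this is verbatim the paper's proof); (B5) reduces to (B4) by the Lipschitz-membership lemma you sketched (the paper isolates this as a separate lemma, with the same split at $|\lambda|=2|t|$ and the same choice $1/p_i=\alpha_i+\gamma$, $\gamma=\tfrac1m(1-\alpha)$); and (B2) is just (B4) with $p_i=\infty$, $\gamma=1$, as you say.

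For (B3) the paper takes a cleaner route than your dyadic partition. Instead of $|D_T(\theta)|\le\min\{T,\pi/|\theta|\}$ and case-splitting on which arguments are small, it uses the single interpolated bound $|D_T(u)|\le 2T^{\delta}|u|^{\delta-1}$ with $\delta=(1+\varepsilon-\gamma)/m$. This turns $|\Phi_T(\mathbf u)|$ into $C\,T^{m\delta-1}\prod|u_j|^{\delta-1}\,|u_1+\cdots+u_{m-1}|^{\delta-1}$, and since $m\delta-1=-\gamma+\varepsilon$ the whole problem becomes the finiteness of two fixed integrals: $\int_{E}|u_i|^{\gamma}\prod|u_j|^{-\beta}|u_1+\cdots+u_{m-1}|^{-\beta}\,d\mathbf u$ and $\int_{E^c}\prod|u_j|^{-\beta}|u_1+\cdots+u_{m-1}|^{-\beta}\,d\mathbf u$ with $\beta=1-\delta$, handled by two short lemmas that iterate the one-variable identity $\int_{\mathbb R}|x|^{-\alpha}|x+y|^{-\beta}\,dx=C|y|^{1-\alpha-\beta}$. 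No $\log T$ bookkeeping is needed; the $\varepsilon$-loss is absorbed once, in the choice of $\delta$.

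Your (B1) has a genuine gap. The defect between $\tr\bigl[\prod_i B_T(h_i)\bigr]$ and $T\cdot M_{\mathbb T,\mathcal H}$ is \emph{not} expressible in terms of $\widehat{\prod_i h_i}$, so the closure of $\mathcal F_1(\mathbb T)$ under products is irrelevant. After the change of variables $s_i=t_i-t_{i+1}$ the trace becomes $\sum_{\mathbf s}\bigl(T-l(\mathbf s)\bigr)\hat h_1(s_1)\cdots\hat h_{m-1}(s_{m-1})\hat h_m(-s_1-\cdots-s_{m-1})$ with $|l(\mathbf s)|\le C\sum_i|s_i|$, while $T\cdot M$ is the same sum with weight $T$ over all of $\mathbb Z^{m-1}$. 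The difference is therefore controlled by $\sum_{i}\bigl(\sum_{s_i}|s_i|\,|\hat h_i(s_i)|\bigr)\prod_{j\ne i}\|\hat h_j\|_{\ell^1}$, which is finite precisely because each $h_i\in\mathcal F_1(\mathbb T)$ \emph{individually} (boundedness of $\hat h_j$ together with $\sum|k|\,|\hat h_j(k)|<\infty$ gives $\hat h_j\in\ell^1$). Replace your product argument by this direct estimate and (B1) goes through with the sharp $O(T^{-1})$.
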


\begin{rem}
\label{rem3-2}
{\rm Assertion (B1) was proved in Taniguchi \cite{Ta}
(see, also, \cite{TK}). Assertion (B2), which is weaker than (B1),
but under weaker conditions than those in (B1), was proved in
Lieberman and Phillips \cite{LP}.
Assertions (B3)--(B5) for $m=4$ were proved in Ginovyan and
Sahakyan \cite{GS6}.}
\end{rem}

\begin{rem}
\label{rem3-3}
{\rm It is easy to see that under the conditions of {\bf(B2)}
we have $h_i\in{\rm Lip}(\mathbb{T}; p, 1)$ for any $i=1,2,\ldots m$
and $p\ge 1$. Hence {\bf(B4)} implies {\bf(B2)}.}
\end{rem}

\begin{exa}
\label{exa1}
{\rm Let $h_i(\lambda)=|\lambda|^{-\alpha_i}$, $\lambda\in [-\pi,\pi],$
$i=1,2,\ldots m$, with $0<\alpha_i<1$ and $\alpha:=\sum_{i=1}^m\alpha_i<1$.
It is easy to see that the conditions of {\bf(B5)} are satisfied,
and hence we have (\ref{t2}) with $\gamma $ as in (\ref{d2}).}
\end{exa}

The next results (cf. Ginovyan \cite{G4}) show that for special case $m=2$ the rates
in Theorem \ref{T3} (B4) and (B5) can be substantially improved.
\begin{thm}
\label{T4-1}
Let $h_i(\la)\in\Lip(\mathbb{T}; p_i, \g_i)$ with $p_i>1$,
$1/p_1+1/p_2 = 1$ and $\g_i \in (0,1],$ $i=1,2$, and let
\begin{equation*}
\De_{2, B}(T):=
\left|\frac1T\tr[B_T(h_1)B_T(h_2)]- 2\pi
\int_{\mathbb{T}}h_1(\la)h_2(\la)\,d\la\right|.
\end{equation*}
Then
\begin{equation*}
\De_{2, B}(T)= \left \{
           \begin{array}{lll}
           O(T^{-(\g_1+\g_2)}), & \mbox {if \, $\g_1+\g_2<1$}\\
           O(T^{-1}\ln T), & \mbox {if \, $\g_1+\g_2=1$}\\
           O(T^{-1}), & \mbox {if \, $\g_1+\g_2>1$}.
           \end{array}
           \right.
\end{equation*}
\end{thm}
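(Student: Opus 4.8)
The plan is to reduce $\De_{2,B}(T)$ to a single integral against the Fej\'er kernel and then to exploit both Lipschitz conditions \emph{simultaneously} through a product-of-differences identity for the correlation function $\varphi$.

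First I would write out the trace explicitly. Since $h_1,h_2$ are real and symmetric, $\widehat h_i(-k)=\widehat h_i(k)$, and counting the diagonals of the product gives
\[
\frac1T\tr[B_T(h_1)B_T(h_2)]=\sum_{|k|<T}\Big(1-\tfrac{|k|}{T}\Big)\widehat h_1(k)\widehat h_2(k),
\]
while Parseval yields $2\pi\inl h_1(\la)h_2(\la)\,d\la=\sum_{k}\widehat h_1(k)\widehat h_2(k)$. Introducing the function $\varphi(u)=\inl h_1(\la)h_2(\la-u)\,d\la$ (the $m=2$ case of (\ref{in-6})), whose Fourier coefficients are exactly $\widehat\varphi(k)=\widehat h_1(k)\widehat h_2(k)$, together with the Fej\'er kernel $F_T(u)=\sum_{|k|<T}(1-|k|/T)e^{iku}=\tfrac1T\big(\sin(Tu/2)/\sin(u/2)\big)^2\ge0$, for which $\inl F_T(u)\,du=2\pi$, the two quantities become $\inl\varphi(u)F_T(u)\,du$ and $2\pi\varphi(0)$ respectively. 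Hence
\[
\De_{2,B}(T)=\Big|\inl[\varphi(u)-\varphi(0)]F_T(u)\,du\Big|\le\inl|\varphi(u)-\varphi(0)|\,F_T(u)\,du,
\]
which reduces everything to the local behaviour of $\varphi$ at the origin, weighted by an approximate identity of width $1/T$.

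The crux is the pointwise estimate $|\varphi(u)-\varphi(0)|\le C|u|^{\g_1+\g_2}$, in which both Lipschitz exponents appear \emph{additively}. The naive bound $|\varphi(u)-\varphi(0)|\le\|h_1\|_{p_1}\,\omega_{p_2}(h_2,|u|)$ and its mirror image only produce $|u|^{\g_2}$ or $|u|^{\g_1}$, which is not enough. The key observation is the identity, obtained by expanding the product and using shift-invariance of the integral together with the evenness of $\varphi$ (itself a consequence of the symmetry of $h_1,h_2$),
\[
\inl[h_1(\la+u)-h_1(\la)][h_2(\la+u)-h_2(\la)]\,d\la=2[\varphi(0)-\varphi(u)].
\]
Applying H\"older with $1/p_1+1/p_2=1$ to the left-hand side and then the hypotheses $h_i\in\Lip(\mathbb{T};p_i,\g_i)$ gives $|\varphi(u)-\varphi(0)|\le\tfrac12\,\omega_{p_1}(h_1,|u|)\,\omega_{p_2}(h_2,|u|)\le C|u|^{\g_1+\g_2}$, valid for every $\g_1+\g_2\in(0,2]$ and, crucially, requiring no separate treatment of the cases $p_i\le 2$ and $p_i> 2$.

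It then remains to insert this bound and use the standard moment asymptotics of the Fej\'er kernel, obtained by splitting the integral at $|u|=1/T$ and using $F_T(u)\le\min\{T,\,C/(Tu^2)\}$:
\[
\inl |u|^{\g_1+\g_2} F_T(u)\,du \le C\begin{cases} T^{-(\g_1+\g_2)}, & \g_1+\g_2<1,\\ T^{-1}\ln T, & \g_1+\g_2=1,\\ T^{-1}, & \g_1+\g_2>1.\end{cases}
\]
Combining the two displays yields exactly the three regimes asserted. The main obstacle is the crux estimate, i.e.\ producing the additive exponent $\g_1+\g_2$ rather than $\max(\g_1,\g_2)$; this is precisely what the product-of-differences identity delivers. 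I would emphasize that a purely Fourier-analytic route—bounding $\sum_{|k|<T}(|k|/T)|\widehat h_1(k)\widehat h_2(k)|+\sum_{|k|\ge T}|\widehat h_1(k)\widehat h_2(k)|$ by Hausdorff--Young on dyadic blocks—fails to be sharp when $p_1\ne 2$, since for $L^p$ with $p>2$ one controls the coefficients only in $\ell^2$ and loses a power of the dyadic scale; working in physical space with the identity above is what circumvents this difficulty, and it is also what avoids the spurious logarithmic factor in the case $\g_1+\g_2>1$.
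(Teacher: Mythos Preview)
Your proof is correct and follows essentially the same approach as the paper: the paper does not spell out the proof of Theorem~\ref{T4-1} but proves its continuous analogue (Theorem~\ref{T5-2}) via exactly the same product-of-differences identity, H\"older's inequality, and the Fej\'er kernel moment estimates (see (\ref{F18})--(\ref{F20}) in Section~\ref{PR}). Your route through the correlation function $\varphi$ and its Fourier coefficients is simply a clean discrete-time rendering of that argument.
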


\begin{thm}
\label{T4-2}
Assume that the functions $h_i(\lambda)$, $i=1,2$, satisfy the conditions
of Theorem \ref{T3} (B5) with $m=2$.
Then
\begin{equation}
\label{t020}
\De_{2, B}(T)= O\left(T^{-1+(\al_1+\al_2)}\right)
\quad\text{as}\quad T\to\infty.
\end{equation}
\end{thm}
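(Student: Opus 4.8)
The plan is to exploit the special structure available when $m=2$: unlike a generic product of $m$ Toeplitz matrices, the trace of $B_T(h_1)B_T(h_2)$ collapses to a single sum over Fourier coefficients. Indeed, since the $(s,t)$ entry of $B_T(h_i)$ is $\widehat h_i(s-t)$ and, by the evenness of $h_2$, $\widehat h_2(t-s)=\widehat h_2(s-t)$, one gets $\tr[B_T(h_1)B_T(h_2)]=\sum_{s,t=1}^T\widehat h_1(s-t)\widehat h_2(s-t)$. Collecting terms according to the difference $k=s-t$, which occurs exactly $T-|k|$ times, yields $\frac1T\tr[B_T(h_1)B_T(h_2)]=\sum_{|k|<T}(1-|k|/T)\,\widehat h_1(k)\widehat h_2(k)$. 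On the other hand, Parseval's identity for the coefficients (\ref{FC}) gives $2\pi\inl h_1(\la)h_2(\la)\,d\la=\sum_{k=-\f}^\f\widehat h_1(k)\widehat h_2(k)$. Subtracting, the approximation error splits cleanly into a ``weight'' part and a ``tail'' part:
\beqq
\De_{2,B}(T)\le \frac1T\sum_{1\le|k|<T}|k|\,|\widehat h_1(k)|\,|\widehat h_2(k)| + \sum_{|k|\ge T}|\widehat h_1(k)|\,|\widehat h_2(k)|.
\eeqq

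The second ingredient is a pointwise bound on the Fourier coefficients. Under the hypotheses of Theorem \ref{T3}\,(B5) with $m=2$, namely $|h_i(\la)|\le C_{1i}|\la|^{-\al_i}$ and $|h_i'(\la)|\le C_{2i}|\la|^{-(\al_i+1)}$ with $0<\al_i<1$, I would show that $|\widehat h_i(k)|\le C|k|^{\al_i-1}$ for $k\ne0$. The standard route is to split $\widehat h_i(k)=\inl e^{ik\la}h_i(\la)\,d\la$ at $|\la|=1/|k|$: on $|\la|<1/|k|$ the integrand is estimated directly, $\int_{|\la|<1/|k|}|\la|^{-\al_i}\,d\la=O(|k|^{\al_i-1})$, while on $1/|k|<|\la|\le\pi$ one integrates by parts. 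The boundary contribution at $|\la|=1/|k|$ is $O(|k|^{-1}\cdot|k|^{\al_i})=O(|k|^{\al_i-1})$, and the remaining integral is controlled by the derivative bound, $|k|^{-1}\int_{|\la|>1/|k|}|\la|^{-(\al_i+1)}\,d\la=O(|k|^{\al_i-1})$.

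It then remains to insert $|\widehat h_i(k)|\le C|k|^{\al_i-1}$ and sum, writing $\al=\al_1+\al_2$. The weight part becomes $\frac CT\sum_{1\le|k|<T}|k|^{\al-1}$; since $\al-1\in(-1,0)$, this partial sum is $O(T^{\al})$, so the whole part is $O(T^{-(1-\al)})=O(T^{-1+(\al_1+\al_2)})$. The tail part becomes $C\sum_{|k|\ge T}|k|^{\al-2}$; since $\al-2<-1$, the convergent tail is $O(T^{\al-1})=O(T^{-1+(\al_1+\al_2)})$. Both pieces match the asserted rate, proving (\ref{t020}).

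The main obstacle is genuinely the Fourier-coefficient estimate of the second step: one must keep the constants uniform and treat the boundary of $\mathbb{T}$ at $\pm\pi$ correctly under the periodic extension, so that the boundary terms arising there from integration by parts are truly $O(|k|^{-1})$ and hence negligible against $|k|^{\al_i-1}$. It is worth stressing where the improvement over the general-$m$ rate $\g=\frac1m(1-\al)$ originates: for $m=2$ the trace equals the \emph{exact} single Fourier sum above, so no multidimensional convolution estimate (as required through $\varphi$ in (B3)--(B5)) enters, and it is precisely this exact collapse that doubles the exponent from $\frac12(1-\al)$ to $1-\al$.
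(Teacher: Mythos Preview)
Your argument is correct, but it takes a genuinely different route from the paper's. The paper (arguing by analogy with the proof of the continuous-time twin, Theorem~\ref{T5-3}) proceeds indirectly: it first shows, via the discrete analog of Lemma~\ref{R1}, that the hypotheses of (B5) force $h_i\in\Lip(\mathbb{T};p_i,1/p_i-\al_i)$ for conjugate exponents $p_i$ chosen with $\al_i<1/p_i$, and then invokes Theorem~\ref{T4-1} with $\g_i=1/p_i-\al_i$, so that $\g_1+\g_2=1-(\al_1+\al_2)<1$ yields the rate $T^{-(1-\al)}$. That proof stays on the $\la$-side throughout, using the Fej\'er-kernel representation of $S_{2,B}(T)$ together with H\"older's inequality and the $L^p$-moduli of continuity.

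Your approach is dual: you pass to the $k$-side, replacing the Fej\'er integral by the exact Ces\`aro sum $\sum_{|k|<T}(1-|k|/T)\widehat h_1(k)\widehat h_2(k)$ and the $L^p$-Lipschitz information by the pointwise decay $|\widehat h_i(k)|\le C|k|^{\al_i-1}$ (which you derive by the standard split-and-integrate-by-parts; note that the boundary contributions at $\pm\pi$ actually cancel by evenness and $e^{\pm ik\pi}=(-1)^k$, so your caveat there is harmless). This is more elementary and self-contained for the specific statement: it avoids the detour through Lipschitz classes and Theorem~\ref{T4-1}. The paper's route, on the other hand, is more unified with the surrounding framework and makes the connection to the general $\Lip$-scale explicit. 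One small point worth a sentence in your write-up: the Parseval identity $2\pi\int h_1h_2=\sum_k\widehat h_1(k)\widehat h_2(k)$ is not immediate because the $h_i$ need not lie in $L^2$; it is justified by choosing conjugate exponents $p_1,p_2$ with $\al_ip_i<1$ (possible since $\al_1+\al_2<1$), so that $h_i\in L^{p_i}(\mathbb{T})$, and then approximating one factor by its Fej\'er means in $L^{p_i}$.
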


The next result, due to Taniguchi \cite{Ta} (see, also, \cite{TK}),
concerns the case when the product in (\ref{im-1}) involves
also inverse matrices, that is, $\tau_k=(-1)^k, \, k=\overline{1, m}$.

\begin{thm}
\label{T4}
Let $SI_{B,\mathcal{H}}(T)$, $MI_{\mathbb{T},\mathcal{H}}$ and
$\De I_{B,\mathbb{T},\mathcal{H}}(T)$ be as in (\ref{iim-07}),
and let $\mathcal{F}_1(\mathbb{T})$ and $\mathcal{F}_2(\mathbb{T})$
be as in (\ref{m-1}) and (\ref{m-2}), respectively.
If $f_i \in \mathcal{F}_{2}$ and $g_i \in \mathcal{F}_1(\mathbb{T})$,
$i=\overline{1, \nu}$, then
\beq
\label{iim-9}
\De I_{B,\mathbb{T},\mathcal{H}}(T)=O(T^{-1}) \q  {\rm as }\ T\to\f.
\eeq
\end{thm}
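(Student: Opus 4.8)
The plan is to replace each inverse Toeplitz matrix $B_T(f_i)^{-1}$ by the ordinary Toeplitz matrix $(2\pi)^{-2}B_T(1/f_i)$ built from the reciprocal symbol, thereby reducing $SI_{B,\mathcal H}(T)$ to $\frac1T\tr$ of a product of $2\nu$ ordinary Toeplitz matrices, to which Theorem \ref{T3}(B1) applies, plus a finite collection of remainder terms which I will show contribute only $O(T^{-1})$. This reduction is legitimate because, by Remark \ref{rem3-1}(b), $f_i\in\mathcal F_2$ forces $f_i^{-1}\in\mathcal F_2\subset\mathcal F_1$, so both $1/f_i$ and $g_i$ are smooth symbols in $\mathcal F_1$, and by Remark \ref{rem3-1}(a) all products of such symbols again lie in $\mathcal F_1$.

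First I would quantify the error of one such replacement. Since the constant symbol $1$ has $\widehat 1(0)=2\pi$ and vanishing off-diagonal coefficients, $B_T(1)=2\pi I_T$, whence $2\pi B_T(f_i\cdot 1/f_i)=2\pi B_T(1)=(2\pi)^2 I_T$. Setting $D_i:=B_T(f_i)B_T(1/f_i)-2\pi B_T(f_i\cdot 1/f_i)$ and $E_i:=(2\pi)^{-2}D_i$, one gets $B_T(f_i)\,(2\pi)^{-2}B_T(1/f_i)=I_T+E_i$, and hence the exact identity $R_i:=B_T(f_i)^{-1}-(2\pi)^{-2}B_T(1/f_i)=-B_T(f_i)^{-1}E_i$. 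The one analytic fact on which everything rests is the uniform nuclear-norm bound $\|D_i\|_1=O(1)$ for ARMA symbols; granting it, $\|E_i\|_1=O(1)$ and, by submultiplicativity, $\|R_i\|\le\|R_i\|_1\le\|B_T(f_i)^{-1}\|\,\|E_i\|_1=O(1)$.

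With $B_T(f_i)^{-1}=(2\pi)^{-2}B_T(1/f_i)+R_i$ available, I would expand $\prod_{i=1}^\nu B_T(f_i)^{-1}B_T(g_i)$ into $2^\nu$ terms. The single \emph{main term}, in which every inverse is replaced by $(2\pi)^{-2}B_T(1/f_i)$, equals $(2\pi)^{-2\nu}\,\frac1T\tr[\prod_{i=1}^\nu B_T(1/f_i)B_T(g_i)]$; applying Theorem \ref{T3}(B1) to these $2\nu$ factors of $\mathcal F_1$ gives $(2\pi)^{-2\nu}[(2\pi)^{2\nu-1}\int_{\mathbb T}\prod_i g_i/f_i\,d\la+O(T^{-1})]=MI_{\mathbb T,\mathcal H}+O(T^{-1})$. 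Each of the other $2^\nu-1$ terms carries at least one factor $R_j=-B_T(f_j)^{-1}E_j$; cycling the trace I would write such a term as $\pm\tr[E_j W]$, where $W$ collects the remaining factors together with $B_T(f_j)^{-1}$, and bound it by $|\tr[E_j W]|\le\|E_j\|_1\,\|W\|$. Here $\|W\|=O(1)$ because every factor has bounded operator norm: $g_i$ and $1/f_i$ are bounded (hence $\|B_T(g_i)\|,\|B_T(1/f_i)\|=O(1)$), $\|B_T(f_i)^{-1}\|\le(2\pi c_i)^{-1}$ with $c_i>0$ a lower bound of the ARMA density $f_i$, and $\|R_i\|=O(1)$ from the previous step. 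Thus each remainder term is $O(T^{-1})$, and summing the finitely many terms yields $\De I_{B,\mathbb T,\mathcal H}(T)=O(T^{-1})$.

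The \textbf{main obstacle} is the uniform nuclear-norm bound $\|D_i\|_1=O(1)$, since trace-class estimates are far more delicate than operator- or Hilbert--Schmidt-norm ones. I would establish it by computing the entries $(D_i)_{s,t}=-\sum_{u\notin\{1,\dots,T\}}\widehat f_i(s-u)\,\widehat{(1/f_i)}(u-t)$, which exhibits $D_i$ as a sum of two blocks supported near the top-left and bottom-right corners of the matrix. For ARMA symbols the Fourier coefficients $\widehat f_i$ and $\widehat{(1/f_i)}$ decay exponentially, so these corner blocks have exponentially decaying entries; estimating the nuclear norm of each block by its entrywise $\ell^1$ mass (or by a bounded-rank-plus-decay argument) then produces a bound independent of $T$. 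This is exactly the point at which the ARMA hypothesis $f_i\in\mathcal F_2$, rather than mere membership in $\mathcal F_1$, is needed.
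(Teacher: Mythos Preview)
The paper does not prove Theorem~\ref{T4}; it is quoted as a result of Taniguchi \cite{Ta} (see also \cite{TK}). Your proposal reconstructs essentially the standard argument: replace each $B_T(f_i)^{-1}$ by $(2\pi)^{-2}B_T(1/f_i)$, control the discrepancy in trace norm, expand the product, invoke (B1) of Theorem~\ref{T3} for the main term, and bound each remainder by $\|R_j\|_1\cdot\prod\|A_k\|$. The algebra and the use of Remark~\ref{rem3-1} are correct, and the entrywise $\ell^1$ bound for the nuclear norm is valid (since $\|A\|_1=\sup_{\|U\|\le1}|\tr(UA)|\le\sum_{s,t}|A_{s,t}|$).

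One correction of emphasis: the nuclear bound $\|D_i\|_1=O(1)$ does \emph{not} require the ARMA hypothesis; it already holds whenever both symbols lie in $\mathcal F_1(\mathbb T)$. Writing the corner block $\sum_{u\le0}\widehat f(s-u)\widehat{(1/f)}(u-t)$ as a product $PQ$ with $P_{s,u}=\widehat f(s-u)$ and $Q_{u,t}=\widehat{(1/f)}(u-t)$, one has
\[
\|PQ\|_1\le\|P\|_2\,\|Q\|_2,\qquad
\|P\|_2^2\le\sum_{v\ge1}v\,|\widehat f(v)|^2\le\Bigl(\sup_v|\widehat f(v)|\Bigr)\sum_{v\ge1}v\,|\widehat f(v)|<\infty
\]
by the $\mathcal F_1$ condition, and similarly for $Q$. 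The genuine role of $f_i\in\mathcal F_2$ is earlier in your argument: it guarantees that $f_i$ is bounded away from zero (so $B_T(f_i)$ is invertible with $\|B_T(f_i)^{-1}\|$ uniformly bounded) and that $1/f_i\in\mathcal F_2\subset\mathcal F_1$ via Remark~\ref{rem3-1}(b). Your exponential-decay computation is correct but stronger than needed for this step.
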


\section{The Trace Problem for Toeplitz Operators}
\label{ATO}
In this section we consider Problems (A) and (B) for Toeplitz operators,
that is, in the case where the generating functions are defined on the
real line. Again, Problem (A) involves $o(1)$ approximation and
Problem (B) involves $O(T^{-\g})$ approximation with $\g>0$.
The theorems in this section are  proved in Section \ref{PR}.

Let $f(\la)$ be an integrable real symmetric function defined on
$\mathbb{R}$.
The analogue of the Fourier coefficients $\widehat f(k)$
in (\ref{FC}) is the Fourier transform $\widehat f(t)$ of $f(\la)$:
\beq
\label{FT}
\widehat f(t)=\itf e^{i\la t}\,f(\la)\,d\la, \q
t\in \mathbb{R}.
\eeq
The $\widehat f$ in (\ref{FT}) will play the role of
kernel in an integral operator.

Given $T>0$ and an integrable real symmetric function $f(\la)$ defined on
$\mathbb{R}$, the {\it $T$-truncated Toeplitz operator\/} generated by $f(\la)$,
denoted by $W_T(f)$, is defined by the following equation
(see, e.g., \cite{GS2,GS,I}):
\beq
\label{MT3-1}
[W_T(f)u](t)=\int_0^T\hat f(t-s)u(s)ds,
\q u(s)\in L^2[0,T],
\eeq
where $\widehat f$ is as in (\ref{FT}).

It follows from (\ref{FT}), (\ref{MT3-1}) and the formula for traces of
integral operators (see, e.g., \cite{GK}, p. 114) that
\beq
\label{MT3-2}
\tr\left[W_T(f)\right]= \int_0^T\hat f(t-t)dt= T\hat f(0) = T\itf f(\la)d\la.
\eeq
We pose the same question as in the case of Toeplitz matrices:
what happens when the single operator $W_T(f)$ is replaced by
a product of such operators? Observe that the product of
Toeplitz operators again is not a Toeplitz operator.

The approach is similar to that of Toeplitz matrices -
to approximate the trace of the product of Toeplitz operators
by the trace of a Toeplitz operator generated
by the product of generating functions. More precisely,
let $\mathcal{H}=\{h_1,h_2,\ldots,h_m\}$ be a collection of integrable
real symmetric functions defined on $\mathbb{R}$. Define
\bea
\label{n 4-4}
S_{W,\mathcal{H}}(T):=\frac1T\tr\left[\prod_{i=1}^m W_T(h_i)\right],\q M_{\mathbb{R},\mathcal{H}}:=(2\pi)^{m-1}\int_{-\f}^{\f}
\left[\prod_{i=1}^m h_i(\la)\right]\,d\la,
\eea
and let
\bea
\label{n 4-5}
&&\De(T):=\De_{W,{\mathbb R},\cal H}(T)=|S_{W,\mathcal{H}}(T)-
M_{{\mathbb R},\mathcal{H}}|.
\eea
Observe that by (\ref{MT3-2}),
\bea
\label{im-77}
M_{\mathbb{R},\mathcal{H}}=(2\pi)^{m-1}\int_{-\f}^{\f}
\left[\prod_{i=1}^m h_i(\la)\right]\,d\la
=\frac1T\tr \left[W_T\left(\prod_{i=1}^m h_i(\la)\right)\right].
\eea
How well is $S_{W,\mathcal{H}}(T)$ approximated by 
$M_{\mathbb{R},\mathcal{H}}$? What is the rate of
convergence to zero of approximation error
$\De_{W,\mathbb{R},\cal H}(T)$ as $T\to\f$?
These are Problems (A) and (B) in this case.

\subsection{Problem (A) for Toeplitz Operators}

\n
In Theorem \ref{T5} and Remark \ref{rem4-1} we summarize the results
concerning Problems (A) for Toeplitz operators in the case where
$\tau_k=1, \, k=\overline{1, m}$.
\begin{thm}
\label{T5}
Let $\De(T):=\De_{W,{\mathbb R},\cal H}(T)$ be as in (\ref{n 4-5}).
Each of the following conditions is sufficient for
\beq
\label{n 4-7}
\De(T)=o(1) \q {\rm as} \q T\to\f.
\eeq
\begin{itemize}
\item[{\bf (A1)}]
$h_i\in L^1(\mathbb{R})\cap L^{p_i}(\mathbb{R})$, $p_i>1$, $i=\overline{1, m}$,
with $1/p_1+\ldots+1/p_m\le 1$.

\item[{\bf (A2)}]
The function $\varphi({\bf u})$ defined by
\beq
\label{n 4-6}
\varphi({\bf u}):\,=\,
\itf h_1(\la)h_2(\la-u_1)h_3(\la-u_2)\cdots h_m(\la-u_{m-1})\,d\la,
\eeq
where ${\bf u}=(u_1,u_2,\ldots,u_{m-1})\in \mathbb{R}^{m-1}$,
belongs to $L^{m-2}(\mathbb{R}^{m-1})$ and is continuous at
${\bf 0}=(0,0,\ldots,0)\in \mathbb{R}^{m-1}$.
\end{itemize}
\end{thm}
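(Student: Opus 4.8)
The plan is to reduce the normalized trace to an integral of $\varphi$ against a product of Dirichlet-type kernels, to recognize that product (after dividing by $T$) as an approximate identity concentrating at the origin, and then to read off the limit as the value $\varphi(\mathbf 0)$. First I would compute the trace. The operator $\prod_{i=1}^m W_T(h_i)$ is an integral operator on $L^2[0,T]$ whose kernel is the $(m-1)$-fold composition of the kernels $\widehat h_i(t-s)$, so the trace formula for integral operators gives
\[
\tr\Big[\prod_{i=1}^m W_T(h_i)\Big]=\int_{[0,T]^m}\widehat h_1(t_0-t_1)\widehat h_2(t_1-t_2)\cdots \widehat h_m(t_{m-1}-t_0)\,dt_0\cdots dt_{m-1}.
\]
Substituting $\widehat h_i(s)=\itf e^{i\la_i s}h_i(\la_i)\,d\la_i$ and carrying out the $t$-integrations, each time variable $t_j$ contributes a factor $\Phi_T(c_j):=\int_0^T e^{ic_j t}\,dt$, where the $c_j$ are telescoping linear forms in the frequencies. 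Changing frequency variables via $\la_1=\la$, $\la_{k+1}=\la-u_k$ (Jacobian one), the inner $\la$-integral is exactly the function $\varphi(\mathbf u)$ appearing in (A2), while the forms become $c_1=-u_1$, $c_j=u_{j-1}-u_j$ $(2\le j\le m-1)$, $c_0=u_{m-1}$, which sum to zero and depend only on $\mathbf u$. This yields
\[
S_{W,\mathcal H}(T)=\frac1T\int_{\mathbb R^{m-1}}\varphi(\mathbf u)\,G_T(\mathbf u)\,d\mathbf u,\qquad G_T(\mathbf u)=\prod_{j=0}^{m-1}\Phi_T(c_j(\mathbf u)),
\]
the interchange being justified by Fubini's theorem (with a routine approximation of the $h_i$ in the general case). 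Since $\varphi(\mathbf 0)=\itf\prod_i h_i(\la)\,d\la$, the target is $M_{\mathbb R,\mathcal H}=(2\pi)^{m-1}\varphi(\mathbf 0)$.

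Next I would establish the approximate-identity structure of $\frac1T G_T$. Integrating $G_T$ over $\mathbf u$ and using $\itf e^{iux}\,du=2\pi\delta(x)$ collapses the $t$-integration to the diagonal $t_0=\cdots=t_{m-1}$, giving $\frac1T\int_{\mathbb R^{m-1}}G_T(\mathbf u)\,d\mathbf u=(2\pi)^{m-1}$. Hence
\[
\De(T)=\Big|\frac1T\int_{\mathbb R^{m-1}}\big[\varphi(\mathbf u)-\varphi(\mathbf 0)\big]G_T(\mathbf u)\,d\mathbf u\Big|,
\]
and the only facts about the kernels I expect to need are the elementary bounds $|\Phi_T(x)|\le\min(T,2/|x|)$ together with the Plancherel identity $\itf|\Phi_T(x)|^2\,dx=2\pi T$.

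To prove (A2) I would split the integral at $|\mathbf u|=\delta$. On $\{|\mathbf u|\le\delta\}$ continuity of $\varphi$ at $\mathbf 0$ reduces matters to a uniform mass bound $\sup_T\frac1T\int_{|\mathbf u|\le\delta}|G_T|\,d\mathbf u\le C$, obtained by repeated Cauchy--Schwarz together with $\itf|\Phi_T|^2=2\pi T$; this piece is then $O\big(\sup_{|\mathbf u|\le\delta}|\varphi(\mathbf u)-\varphi(\mathbf 0)|\big)$. On $\{|\mathbf u|>\delta\}$ the chain structure forces at least one form to satisfy $|c_j|\gtrsim\delta$, so its factor obeys $|\Phi_T(c_j)|\le 2/|c_j|$ and does not grow with $T$; estimating the surviving factors through $\itf|\Phi_T|^2=2\pi T$ and pairing the resulting bound against $\varphi\in L^{m-2}(\mathbb R^{m-1})$ by Hölder's inequality produces a quantity in which the powers of $T$ cancel the prefactor $1/T$ and which tends to $0$ for each fixed $\delta$. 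Letting $T\to\infty$ and then $\delta\to0$ gives $\De(T)\to0$. \emph{This tail term is the main obstacle}: the exponents must be balanced so that the integrability index $m-2$ assumed on $\varphi$ exactly absorbs the kernel factors left after the $L^2$ estimates have consumed the prefactor $1/T$, and one must extract genuine smallness (an $o(1)$), not merely boundedness, from the off-origin decay of the $\Phi_T$.

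Finally, I would deduce (A1) by showing it implies the hypotheses of (A2). Reducing to $1/p_1+\cdots+1/p_m=1$ (permissible since each $h_i\in L^1(\mathbb R)\cap L^{p_i}(\mathbb R)$ interpolates to smaller exponents), the generalized Hölder inequality gives $|\varphi(\mathbf u)|\le\prod_i\|h_i\|_{p_i}$, so $\varphi$ is bounded, while continuity of translation in $L^{p_i}$ makes $\varphi$ continuous, in particular at $\mathbf 0$. Tonelli's theorem applied with $h_i\in L^1(\mathbb R)$ gives $\int_{\mathbb R^{m-1}}|\varphi|\,d\mathbf u\le\prod_i\|h_i\|_1<\f$, so $\varphi\in L^1\cap L^\infty\subset L^{m-2}(\mathbb R^{m-1})$. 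Thus (A2) holds, and the conclusion follows from the preceding step.
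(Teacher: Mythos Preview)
Your overall architecture coincides with the paper's: represent $S_{W,\mathcal H}(T)$ as $(2\pi)^{m-1}\int_{\mathbb R^{m-1}}\Psi(\mathbf u)\Phi_T(\mathbf u)\,d\mathbf u$ with $\Phi_T$ a Dirichlet-product kernel of total mass one, then split at $|\mathbf u|=\delta$ and use continuity of $\varphi$ at $\mathbf 0$ plus the uniform $L^1$ bound on the kernel for the near part. Your reduction (A1)$\Rightarrow$(A2) is also essentially the paper's, though you gloss over the case where some $p_i=\infty$ (translation is not continuous in $L^\infty$); the paper handles this by a short case analysis, decomposing the finite-$p$ factor into a bounded piece plus a small-$L^{p}$ piece.

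The genuine gap is the tail term in (A2), which you yourself flag. Your proposed mechanism --- bound one factor by $2/|c_j|$, control the remaining ones via $\int|\Phi_T|^2=2\pi T$, and pair against $\varphi\in L^{m-2}$ by H\"older --- does not yield $o(1)$. What it yields (and what the paper proves as Lemma~\ref{lem73}\,d)) is a \emph{uniform} bound
\[
\sup_{T>0}\int_{\mathbb E_\delta^c}|\Phi_T(\mathbf u)|^{p(m)}\,d\mathbf u\le C_\delta,\qquad p(m)=\tfrac{m-2}{m-3},
\]
the H\"older conjugate of $m-2$. Pairing this with $\|\varphi\|_{m-2}$ gives only $O(1)$. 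The paper extracts the missing smallness by an additional decomposition: given $\varepsilon>0$, write $\Psi=\Psi_1+\Psi_2$ with $\|\Psi_1\|_{m-2}\le \varepsilon/\sqrt{C_\delta}$ and $\|\Psi_2\|_\infty<\infty$. The $\Psi_1$ piece is then $\le\varepsilon$ by the H\"older bound above, while the $\Psi_2$ piece is bounded by a constant times $\int_{\mathbb E_\delta^c}|\Phi_T|\,d\mathbf u$, which tends to $0$ as $T\to\infty$ by the approximate-identity property (Lemma~\ref{lem73}\,c)). This two-step splitting of $\Psi$ is the idea you are missing; without it, the exponents do balance but you are stuck at boundedness rather than convergence.
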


\begin{rem}
\label{rem4-1}
{\rm For the special case $m=4$, $h_1=h_3:=f$ and $h_2=h_4:=g$,
Ginovyan and Sahakyan \cite{GS2} prove that the following
conditions are also sufficient for (\ref{n 4-7}):

\n {\bf (A3)}
$f\in L^1(\mathbb{R})\cap L^2(\mathbb{R})$,
\,$g\in L^1(\mathbb{R})\cap L^2(\mathbb{R})$,
\,$fg\in L^2(\mathbb{R})$ and
\beqq
\itf f^2(\la)g^2(\la-\mu)\,d\la \longrightarrow
 \itf f^2(\la)g^2(\la)\,d\la \quad {\rm as} \quad \mu\to0.
\eeqq
{\bf (A4)}
The functions $f$ and $g$ are integrable on $\mathbb{R}$,
bounded on $\mathbb{R}\setminus (-\pi, \pi)$, and satisfy
$$
f(\lambda)\le |\lambda|^{-\alpha}L_1(\lambda) \quad \text{and}\quad
|g(\lambda)|\le |\lambda|^{-\beta}L_2(\lambda)\quad \text{for} \quad
\lambda\in [-\pi, \pi],
$$
for some $\alpha<1,\ \beta<1$ with  $\alpha+\beta\le1/2,$
and $L_i\in SV(\mathbb{R})$,
$\ \lambda^{-(\alpha+\beta)}L_i(\lambda)\in L^2(\mathbb{T}),\ \ i=1,2$,
where $SV(\mathbb{R})$ is the class of slowly varying
at zero functions $u(\lambda)$, $\lambda\in\mathbb{R}$, satisfying
$u(\lambda)\in L^\infty(\mathbb{R}),$\
$\lim_{\lambda\to0}u(\lambda)=0,$ \
$u(\lambda)=u(-\lambda)$ and $0<u(\lambda)<u(\mu)$\ for\ $0<\lambda<\mu.$
}
\end{rem}

\begin{rem}
\label{rem2-122}
{\rm It would be of interest to extend the results of (A3) and
(A4) to arbitrary $m>4$.}
\end{rem}

\subsection{Problem (B) for Toeplitz Operators}

\n
In Theorem \ref{T5-1} below we summarize the results concerning
Problems (B) for Toeplitz operators in the case where
$\tau_k=1, \, k=\overline{1, m}$. Let
\bea
\label{mr-1}
\mathcal{F}_1(\mathbb{R}):= \left\{f\in L^1(\mathbb{R}): \, \int_{-\f}^\f|t|
|\widehat f(t)|dt<\f \right\},
\eea
where $\widehat f(t)=\int_\mathbb{R} e^{i\la t}\,f(\la)\,d\la$, \
$\widehat f(-t)=\widehat f(t)$.

For $\psi\in L^p(\rr)$, $1\le p\le\infty$
let
$\omega_p(\psi,\delta)$ denote the $L^p$--modulus of continuity of $\psi$:
$$
\omega_p(\psi,\delta) :=\sup_{0<h\le \delta}\|\psi(\cd+h)-\psi(\cd)\|_p,\q \delta>0.
$$
Given numbers $0 < \g \le 1$ and $1\le p\le\infty$, we denote by
$\Lip (\mathbb{R}; p, \g)$ the $L^p$-Lipschitz
class of functions defined on $\rr$ (see, e.g.,  \cite{BN}):
$$
\Lip (\mathbb{R}; p, \g) = \{\psi(\la)\in L^p(\rr)\ : \
\om_p(\psi;\de) = O(\de ^\g)\ \text{as}\
  \de \to 0\}.$$

Theorem \ref{T5-1} is the continuous version of  Theorem \ref{T3}.
\begin{thm}
\label{T5-1}
Let ${\cal H}=\{h_1,h_2,\ldots,h_m\}$, and $\De_{{W,\mathbb R},\cal H}(T)$
and $\varphi({\bf u})$ be as in (\ref{n 4-5}) and (\ref{n 4-6}), respectively.
The following assertions hold:
\begin{itemize}
\item[{\bf (B1)}]
If $h_i \in \mathcal{F}_1(\mathbb{R})$, $i=\overline{1, m}$, then
\beq
\label{mr-2 new}
\De_{W,{\mathbb R},\cal H}(T)=O(T^{-1}) \q  {\rm as }\q T\to\f.
\eeq

\item[{\bf (B2)}]
Assume that $\varphi ({\bf u})\in L^\infty(\rr^{m-1})$
and with some constants $C>0$ and $\g\in(0,1]$
\beq
\label{n 5-3}  |\varphi ({\bf
u})-\varphi ({\bf 0})|\leq C|{\bf u}|^\g, \q
{\bf u}=(u_1,u_2,\ldots,u_{m-1})\in \rr^{m-1},
\eeq where
${\bf 0}=(0,0,\ldots,0)$ and $|{\bf u}|=|u_1|+ |u_2| +\cdots+|u_{m-1}|$.
Then for any $\varepsilon >0$
\beq
\label{tt2 new}
\De_{W,{\mathbb R},\cal H}(T)=O\left(T^{-\g+\varepsilon}\right)
\quad\text{as}\quad T\to\infty. \eeq

\item[{\bf (B3)}]
Let $h_i(\la)\in\Lip(\mathbb{R}; p_i, \g)$, $i=1,2,\ldots,m$,
$1/p_1+\cdots+1/p_m\leq 1$ and $\g \in (0,1]$.
Then (\ref{tt2 new}) holds for any $\varepsilon >0$.

\item[{\bf (B4)}]
Let $h_i(\lambda)$, $i=1,2,\ldots,m$, be differentiable functions
defined on $\mathbb{R}\setminus\{0\}$, such that for some
constants $C_{i}>0$ and
$\sigma_i>0$, $\delta_i>1$, $i=1,2,\ldots,m$ with
$\sigma:=\sum_{i=1}^m\sigma_i<1$
\begin{equation}
\label{prime new}
|h_i(\lambda)|\le
\begin{cases}
C_{i}|\lambda|^{-\sigma_i}& \text {if}\q |\lambda|\le 1\cr
C_{i}|\lambda|^{-\delta_i}& \text {if} \q |\lambda|>1
\end{cases}, \quad
|h_i^\prime(\lambda)|\le
\begin{cases}
C_{i}|\lambda|^{-\sigma_i-1}& \text {if} \q |\lambda|\le 1\cr
C_{i}|\lambda|^{-\delta_i-1}& \text {if} \q |\lambda|>1
\end{cases}
\end{equation}
for all \, $i=1,2,\ldots,m$. Then for any $\varepsilon >0$
\beq
\label{n 5-6}
\Delta_{W,{\mathbb R},{\cal H}}(T)=O\left(T^{-\g+\varepsilon}\right)
\quad\text{as}\quad T\to\infty
\eeq
with
\begin{equation}
\label{n 5-7}
\gamma =\frac1m(1-\sigma).
\end{equation}
\end{itemize}
\end{thm}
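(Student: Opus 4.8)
My plan is to reduce all four assertions to a single Fourier-domain integral representation, to treat \textbf{(B2)} as the analytic core from which \textbf{(B3)} and \textbf{(B4)} follow, and to dispatch \textbf{(B1)} by a separate time-domain argument. Writing $K_T(x):=\int_0^T e^{ixt}\,dt$, inserting the Fourier transforms $\widehat h_i$ into the kernel of the product $\prod_{i=1}^m W_T(h_i)$ and integrating the cyclic $t$-variables over $[0,T]^m$ gives
\beq
S_{W,\mathcal H}(T)=\frac1T\int_{\rr^{m-1}}\varphi(\mathbf v)\,\Phi_T(\mathbf v)\,d\mathbf v,\qquad \Phi_T(\mathbf v):=\prod_{j=0}^{m-1}K_T(v_j-v_{j+1}),
\eeq
where $v_0=v_m=0$ and $\varphi$ is as in (\ref{n 4-6}); the substitution $\la_1=s,\ \la_{j+1}=s-v_j$ is exactly what turns the $\la$-integral into $\varphi(\mathbf v)$. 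The crucial fact is the exact normalization $\frac1T\int_{\rr^{m-1}}\Phi_T(\mathbf v)\,d\mathbf v=(2\pi)^{m-1}$, which I would verify by writing each $K_T$ as $\int_0^T e^{i(v_j-v_{j+1})t_j}\,dt_j$ and integrating out $\mathbf v$ to produce $(2\pi)^{m-1}$ Dirac factors that collapse the loop to its diagonal. Combined with (\ref{im-77}) this yields the clean error identity
\beq
\label{plan-err}
\De_{W,\rr,\mathcal H}(T)=\frac1T\left|\int_{\rr^{m-1}}\big(\varphi(\mathbf v)-\varphi(\mathbf 0)\big)\Phi_T(\mathbf v)\,d\mathbf v\right|,
\eeq
on which \textbf{(B2)} will rest.

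For \textbf{(B2)} I would pass to the difference coordinates $w_j:=v_j-v_{j+1}$ (constrained by $\sum_{j=0}^{m-1}w_j=0$), so that $|\Phi_T(\mathbf v)|=\prod_j|K_T(w_j)|$ and $|\mathbf v|\le C\sum_j|w_j|$, with $\mathbf v=\mathbf 0$ precisely when $\mathbf w=\mathbf 0$. Using $|K_T(x)|\le\min(T,2/|x|)$ together with the two-sided bound $|\varphi(\mathbf v)-\varphi(\mathbf 0)|\le\min\{2\|\varphi\|_\infty,\,C|\mathbf v|^\g\}$ coming from (\ref{n 5-3}), I would estimate (\ref{plan-err}) by a dyadic decomposition of the constrained $\mathbf w$-space. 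On the core block $|\mathbf w|\lesssim 1/T$ all $m$ kernels have size $T$, the weight is $\lesssim T^{-\g}$, and the block has measure $\lesssim T^{-(m-1)}$, producing the main term $O(T^{-\g})$; on the remaining shells, where the kernels decay like $1/|w_j|$, one sums against the bounded weight, and it is exactly the borderline shells that generate the logarithmic factors responsible for the loss $T^{-\g+\vs}$. \emph{This tail analysis is the main obstacle}: since $\Phi_T$ is neither nonnegative nor absolutely integrable for $m\ge3$, one cannot simply bound $|\int(\varphi-\varphi(\mathbf 0))\Phi_T|$ by a convergent integral, and the geometry of the constraint $\sum_j w_j=0$ must be exploited to keep each shell summable.

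I would then deduce \textbf{(B3)} and \textbf{(B4)} from \textbf{(B2)} by verifying its hypotheses on $\varphi$. For \textbf{(B3)}, a telescoping of $\varphi(\mathbf u)-\varphi(\mathbf 0)$ into single-argument increments, each estimated by H\"older's inequality with exponents $p_i$ ($\sum 1/p_i\le1$) and by $\|h_k(\cd-u)-h_k\|_{p_k}=\omega_{p_k}(h_k,|u|)\le C|u|^\g$, gives both $\varphi\in L^\infty$ and $|\varphi(\mathbf u)-\varphi(\mathbf 0)|\le C|\mathbf u|^\g$, so \textbf{(B2)} applies. For \textbf{(B4)}, I would show that the power bounds (\ref{prime new}) force $h_i\in\Lip(\rr;p_i,\g_i)$ with $\g_i=1/p_i-\sigma_i$: the $h_i'$-bound controls the increment away from the singularity while the tail bound $|\la|^{-\delta_i}$, $\delta_i>1$, secures $h_i\in L^{p_i}(\rr)$. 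Choosing $1/p_i=\g+\sigma_i$ then makes $\sum 1/p_i=m\g+\sigma=1$ with the common exponent $\g=(1-\sigma)/m$ of (\ref{n 5-7}), and \textbf{(B3)} delivers (\ref{n 5-6}).

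Finally I would treat \textbf{(B1)} independently in the time domain, using $T\,S_{W,\mathcal H}(T)=\int_{[0,T]^m}\prod_{i=1}^m\widehat h_i(t_{i-1}-t_i)\,d\mathbf t$ (cyclically, $t_m=t_0$). Changing to increments $r_i=t_{i-1}-t_i$, the $t_0$-integration contributes a factor $L_T(\mathbf r)=\max(0,T-V(\mathbf r))$, where $V(\mathbf r)$ is the spread of the partial sums and satisfies $V(\mathbf r)\le C\sum_i|r_i|$; since $TM$ is the same integral with $L_T$ replaced by $T$, one gets $T(S-M)=\int_{\rr^{m-1}}\prod_i\widehat h_i(r_i)\,(L_T-T)\,d\mathbf r$ with $|L_T-T|\le C\sum_i|r_i|$. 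The condition $\int|t||\widehat h_i(t)|\,dt<\f$ defining $\mathcal F_1(\rr)$ makes the resulting integral finite and independent of $T$, which is precisely the $O(T^{-1})$ rate.
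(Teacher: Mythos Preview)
Your reductions for \textbf{(B1)}, \textbf{(B3)}, and \textbf{(B4)} coincide with the paper's: the time-domain increment argument for \textbf{(B1)} (the paper's $l(\mathbf t_{m-1})$ is your $V(\mathbf r)$), the H\"older/telescoping verification that the Lipschitz hypotheses imply those of \textbf{(B2)} for \textbf{(B3)}, and the choice $1/p_i=\sigma_i+(1-\sigma)/m$ together with the lemma that (\ref{prime new}) forces $h_i\in\Lip(p_i,1/p_i-\sigma_i)$ for \textbf{(B4)} are exactly what the paper does.

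For \textbf{(B2)} your approach diverges, and there is both a factual slip and an unnecessary detour. The slip: your claim that $\Phi_T$ is not absolutely integrable for $m\ge3$ is false. The closed product $D_T(u_1)\cdots D_T(u_{m-1})D_T(u_1+\cdots+u_{m-1})$, divided by $T$, is uniformly bounded in $L^1(\rr^{m-1})$; this is a classical lemma (Bentkus), recorded in the paper as Lemma~\ref{lem73}(a). So part of the ``main obstacle'' you flag is illusory.

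More to the point, the paper sidesteps your dyadic shell analysis entirely. From your own bound $|K_T(x)|\le\min(T,2/|x|)$ one gets, via $\min(a,b)\le a^\delta b^{1-\delta}$, the interpolated form $|D_T(u)|\le 2\,T^\delta|u|^{\delta-1}$ for every $\delta\in(0,1)$ (Lemma~\ref{lem01}). Applying this with the \emph{single} choice $\delta=(1+\varepsilon-\gamma)/m$ to all $m$ Dirichlet factors simultaneously yields
\[
|\Phi_T(\mathbf u)|\le C\,T^{m\delta-1}\,\bigl|u_1\cdots u_{m-1}(u_1+\cdots+u_{m-1})\bigr|^{\delta-1}.
\]
After multiplying by $|\Psi(\mathbf u)-\Psi(\mathbf 0)|\le C\sum_i|u_i|^\gamma$ on the unit cube $E$ and by $2\|\varphi\|_\infty$ on $E^C$, the whole estimate reduces to two $T$-free integrals of the type
\[
\int_{E}\frac{|u_i|^\gamma}{|u_1\cdots u_{m-1}(u_1+\cdots+u_{m-1})|^{1-\delta}}\,d\mathbf u,
\qquad
\int_{E^C}\frac{d\mathbf u}{|u_1\cdots u_{m-1}(u_1+\cdots+u_{m-1})|^{1-\delta}},
\]
whose finiteness is established by iterating the one-dimensional identity $\int_{\rr}|x|^{-\alpha}|x+y|^{-\beta}\,dx=C|y|^{1-\alpha-\beta}$ (Lemmas~\ref{lem2} and~\ref{lem3}). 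Since $1-m\delta=\gamma-\varepsilon$, the rate $O(T^{-\gamma+\varepsilon})$ falls out immediately. This trades all of your multi-scale bookkeeping on the constrained $\mathbf w$-hyperplane for one parameter choice and two clean integral lemmas; your route may be completable, but the paper's is both shorter and free of the acknowledged gap.
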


The next results (see \cite{G4}), which are continuous versions of Theorems
\ref {T4-1} and \ref {T4-2}, respectively, show that for the special case
$m=2$, the rates in Theorem \ref{T5-1} (B3) and (B4) can be substantially improved.

\begin{thm}
\label{T5-2}
Let $h_i(\la)\in\Lip(\mathbb{R}; p_i, \g_i)$ with $p_i>1$,
$1/p_1+1/p_2 = 1$ and $\g_i \in (0,1],$ $i=1,2$, and let
$$
\De_{2, W}(T):=
\left|\frac1T\tr[W_T(h_1)W_T(h_2)]
- 2\pi\int_{\mathbb{R}}h_1(\la)h_2(\la)\,d\la\right|.
$$
Then
$$
\De_{2, W}(T)= \left \{
           \begin{array}{lll}
           O(T^{-(\g_1+\g_2)}), & \mbox {if \, $\g_1+\g_2<1$}\\
           O(T^{-1}\ln T), & \mbox {if \, $\g_1+\g_2=1$}\\
           O(T^{-1}), & \mbox {if \, $\g_1+\g_2>1$}.
           \end{array}
           \right.
$$
\end{thm}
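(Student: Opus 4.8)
The plan is to reduce $\De_{2,W}(T)$ to a single weighted integral of $\widehat h_1\widehat h_2$ over the line, and then to control that integral by a dyadic decomposition whose geometric sums produce the three regimes.

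First I would compute the trace exactly. The operator $W_T(h_1)W_T(h_2)$ acts on $L^2[0,T]$ with kernel $K(t,s)=\int_0^T\widehat h_1(t-r)\widehat h_2(r-s)\,dr$, so its trace is $\int_0^TK(t,t)\,dt=\int_0^T\int_0^T\widehat h_1(t-r)\widehat h_2(r-t)\,dr\,dt$. Since $h_1,h_2$ are real and symmetric, the transforms $\widehat h_i$ are real and even; substituting $u=t-r$ and applying Fubini collapses the double integral to the clean identity
\[
\frac1T\tr[W_T(h_1)W_T(h_2)]=\int_{-T}^{T}\Bigl(1-\frac{|u|}{T}\Bigr)\widehat h_1(u)\widehat h_2(u)\,du.
\]
On the other hand, with the convention $\widehat f(t)=\itf e^{i\la t}f(\la)\,d\la$, Parseval's formula gives $\itf\widehat h_1(u)\widehat h_2(u)\,du=2\pi\int_{\R}h_1(\la)h_2(\la)\,d\la$, which is exactly the quantity subtracted in $\De_{2,W}(T)$. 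Subtracting, the constant term cancels and the triangle inequality yields
\[
\De_{2,W}(T)\le \frac1T\int_{-T}^{T}|u|\,|\widehat h_1(u)\widehat h_2(u)|\,du+\int_{|u|>T}|\widehat h_1(u)\widehat h_2(u)|\,du=:R_1(T)+R_2(T).
\]

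The heart of the argument is the dyadic estimate $I(N):=\int_{N\le|u|\le 2N}|\widehat h_1(u)|\,|\widehat h_2(u)|\,du=O(N^{-(\g_1+\g_2)})$ as $N\to\f$. I would prove it by Hölder followed by a Fourier-decay estimate for each factor. Because $1/p_1+1/p_2=1$, the conjugate exponents also satisfy $1/p_1'+1/p_2'=1$, so $I(N)\le \|\widehat h_1\|_{L^{p_1'}(\text{block})}\,\|\widehat h_2\|_{L^{p_2'}(\text{block})}$. For each factor I would use the difference identity $\widehat{\De_\delta h_i}(t)=(1-e^{-i\delta t})\widehat h_i(t)$, where $\De_\delta h_i(\la)=h_i(\la)-h_i(\la+\delta)$. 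Taking $\delta=\pi/(2N)$ keeps $|1-e^{-i\delta t}|$ bounded below on the block $N\le|t|\le2N$, so there $|\widehat h_i(t)|\le C\,|\widehat{\De_\delta h_i}(t)|$; the Hausdorff--Young inequality together with the Lipschitz hypothesis then gives
\[
\|\widehat h_i\|_{L^{p_i'}(\text{block})}\le C\,\|\widehat{\De_\delta h_i}\|_{p_i'}\le C\,\|\De_\delta h_i\|_{p_i}\le C\,\om_{p_i}(h_i,\delta)=O(\delta^{\g_i})=O(N^{-\g_i}),
\]
and multiplying the two factors yields $I(N)=O(N^{-(\g_1+\g_2)})$.

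With this lemma the three regimes follow from summing geometric series. Splitting $\{|u|>T\}$ into dyadic blocks $[2^jT,2^{j+1}T]$, $j\ge0$, gives $R_2(T)=O(T^{-(\g_1+\g_2)})$ since $\g_1+\g_2>0$. For $R_1(T)$ I would bound the part $|u|\le1$ by $O(T^{-1})$ (using $\widehat h_i\in L^\infty$, as $h_i\in L^1$) and decompose $1\le|u|\le T$ into blocks $[2^j,2^{j+1}]$, on which $|u|\le 2^{j+1}$ and $I(2^j)=O(2^{-j(\g_1+\g_2)})$, so each block contributes $O\bigl(T^{-1}2^{\,j(1-\g_1-\g_2)}\bigr)$. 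Summing over $0\le j\le\log_2 T$ gives $O(T^{-(\g_1+\g_2)})$ when $\g_1+\g_2<1$ (increasing geometric sum), $O(T^{-1}\ln T)$ when $\g_1+\g_2=1$, and $O(T^{-1})$ when $\g_1+\g_2>1$ (convergent sum). Combining $R_1$ and $R_2$ reproduces exactly the three cases of the statement.

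The trace computation and the geometric bookkeeping are routine; the main obstacle is the single-factor Fourier-decay step, i.e.\ extracting the sharp rate $N^{-\g_i}$ from the $L^{p_i}$-modulus of continuity. This is precisely why the hypothesis $1/p_1+1/p_2=1$ is imposed: the resulting conjugacy $1/p_1'+1/p_2'=1$ is what lets Hölder pair the two $L^{p_i'}$ estimates with no leftover power of $N$. A delicate point to isolate is the admissible range of exponents, since Hausdorff--Young requires the exponent to lie in $[1,2]$, whereas the conjugacy forces one $p_i$ above $2$ unless $p_1=p_2=2$; for that factor the decay estimate cannot come from a direct application of Hausdorff--Young and must instead exploit the additional integrability of $h_i$ built into the definition of $W_T(h_i)$. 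Establishing that factor sharply, without degrading the exponent $\g_i$, is the technical crux of the proof.
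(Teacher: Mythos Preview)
Your reduction to $R_1(T)+R_2(T)$ via the formula $\frac1T\tr[W_T(h_1)W_T(h_2)]=\int_{-T}^T(1-|u|/T)\widehat h_1(u)\widehat h_2(u)\,du$ is correct, and the dyadic bookkeeping that turns a block estimate $I(N)=O(N^{-(\g_1+\g_2)})$ into the three regimes is fine. The problem is exactly the step you flag as the ``technical crux'': it is a genuine gap, not a detail. Hausdorff--Young gives $\|\widehat{\De_\delta h_i}\|_{p_i'}\le C\|\De_\delta h_i\|_{p_i}$ only when $1\le p_i\le 2$, and under $1/p_1+1/p_2=1$ one of the $p_i$ is $\ge 2$ (strictly, unless $p_1=p_2=2$). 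For that factor the only extra information available is $h_i\in L^1$, which yields $\widehat{\De_\delta h_i}\in L^\infty$ with norm $\le\|\De_\delta h_i\|_1$; but the Lipschitz hypothesis controls $\|\De_\delta h_i\|_{p_i}$, not $\|\De_\delta h_i\|_1$, so you cannot recover $O(\delta^{\g_i})$ this way without additional assumptions. Interpolation between $L^\infty$ and $L^{p_i'}$ on a block of measure $\sim N$ introduces positive powers of $N$ that spoil the exponent. In short, your scheme proves the theorem only when $p_1=p_2=2$; for general conjugate $p_1,p_2$ the block estimate $I(N)=O(N^{-(\g_1+\g_2)})$ is not established.

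The paper takes a different and much shorter route that avoids Fourier decay entirely. It starts from the Fej\'er-kernel representation
\[
\frac1T\tr[W_T(h_1)W_T(h_2)]=2\pi\int_{\mathbb R}\int_{\mathbb R}F_T(s-t)\,h_1(s)h_2(t)\,dt\,ds,
\]
symmetrizes in $u=s-t$, and uses $\int_{\mathbb R}F_T=1$ to write
\[
\De_{2,W}(T)=\Bigl|\pi\int_{\mathbb R}F_T(u)\int_{\mathbb R}\bigl(h_1(t)-h_1(t+u)\bigr)\bigl(h_2(t+u)-h_2(t)\bigr)\,dt\,du\Bigr|.
\]
Now a single application of H\"older in $t$ (this is where $1/p_1+1/p_2=1$ is used, and no Hausdorff--Young is needed) gives
\[
\De_{2,W}(T)\le \pi\int_{\mathbb R}F_T(u)\,\om_{p_1}(h_1,|u|)\,\om_{p_2}(h_2,|u|)\,du,
\]
and the three cases follow immediately from $\om_{p_i}(h_i,|u|)\le C|u|^{\g_i}$ for $|u|\le1$, the bound $\om_{p_i}(h_i,|u|)\le 2\|h_i\|_{p_i}$ for $|u|>1$, and the standard Fej\'er estimates $\int_{|u|>1}F_T\le CT^{-1}$ and $\int_0^1F_T(u)u^\al du$ of order $T^{-\al}$, $T^{-1}\ln T$, $T^{-1}$ according as $\al<1$, $\al=1$, $\al>1$. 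This bypasses precisely the obstruction in your argument: the moduli of continuity enter directly, in the right $L^{p_i}$ norms, with no passage through $\widehat h_i$.
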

\begin{thm}
\label{T5-3}
Assume that the functions $h_i(\lambda)$, $i=1,2$, satisfy the conditions
of Theorem \ref{T5-1} (B4) with $m=2$.
Then
\begin{equation}
\label{t53}
\De_{2, W}(T)= O\left(T^{-1+(\sigma_1+\sigma_2)}\right)
\quad\text{as}\quad T\to\infty.
\end{equation}
\end{thm}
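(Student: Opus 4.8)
The plan is to compute $\frac1T\tr[W_T(h_1)W_T(h_2)]$ exactly and then compare it termwise with the target $M:=2\pi\int_\rr h_1(\la)h_2(\la)\,d\la$. First I would use the kernel representation (\ref{MT3-1}): the product $W_T(h_1)W_T(h_2)$ is the integral operator on $L^2[0,T]$ with kernel $K(t,r)=\int_0^T\widehat h_1(t-s)\widehat h_2(s-r)\,ds$, so by the trace formula (as in (\ref{MT3-2})) its trace equals $\int_0^T\int_0^T\widehat h_1(t-s)\widehat h_2(s-t)\,ds\,dt$. Since $h_2$ is real and symmetric, $\widehat h_2$ is even, and the elementary identity $\int_0^T\int_0^T\phi(t-s)\,ds\,dt=\int_{-T}^T(T-|x|)\phi(x)\,dx$ with $\phi=\widehat h_1\widehat h_2$ gives
\[
\frac1T\tr[W_T(h_1)W_T(h_2)]=\int_{-T}^T\Big(1-\frac{|x|}{T}\Big)\widehat h_1(x)\widehat h_2(x)\,dx.
\]
On the other hand, Parseval's identity in the convention (\ref{FT}) yields $M=\itf\widehat h_1(x)\widehat h_2(x)\,dx$. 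Subtracting, the Fej\'er-type weight $1-|x|/T$ produces exactly two error contributions, and therefore
\[
\De_{2,W}(T)\le\frac1T\int_{-T}^T|x|\,|\widehat h_1(x)|\,|\widehat h_2(x)|\,dx
+\int_{|x|>T}|\widehat h_1(x)|\,|\widehat h_2(x)|\,dx.
\]

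The crux of the argument, and the step I expect to be the main obstacle, is converting the pointwise bounds (\ref{prime new}) on $h_i$ and $h_i'$ into decay of the Fourier transform $\widehat h_i$. The target estimate is $|\widehat h_i(x)|\le C|x|^{\si_i-1}$ for $|x|\ge1$, together with $|\widehat h_i(x)|\le C$ for $|x|\le1$; the latter is immediate because (\ref{prime new}) forces $h_i\in L^1(\rr)$. For the decay I would split $\widehat h_i(x)$ into the three regions $|\la|\le1/|x|$, $1/|x|<|\la|\le1$, and $|\la|>1$. On the innermost region the singularity bound $|h_i(\la)|\le C_i|\la|^{-\si_i}$ integrates, using $\si_i<1$, to $O(|x|^{\si_i-1})$. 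On the middle region one integration by parts against $e^{i\la x}$ trades a factor $|x|^{-1}$ for the derivative bound $|h_i'(\la)|\le C_i|\la|^{-\si_i-1}$, and both the boundary term at $|\la|=1/|x|$ and the resulting interior integral are again $O(|x|^{\si_i-1})$. On the outer region the exponent $\delta_i>1$ makes $h_i$ and $h_i'$ integrable, so a further integration by parts gives $O(|x|^{-1})$, which is dominated by $|x|^{\si_i-1}$ since $\si_i>0$.

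Finally I would substitute these bounds into the two error pieces, writing $\si=\si_1+\si_2<1$. The tail term is controlled by $2C\int_T^\infty x^{\si-2}\,dx=O(T^{\si-1})$, the integral converging because $\si-2<-1$. For the weighted term, the part over $|x|\le1$ contributes $O(T^{-1})$, while the part over $1<|x|\le T$ equals $\frac{2C}{T}\int_1^T x^{\si-1}\,dx=O\big(T^{-1}\cdot T^{\si}\big)=O(T^{\si-1})$, using $\si<1$ so that the integral is governed by its upper endpoint. Combining the two pieces gives $\De_{2,W}(T)=O(T^{\si-1})=O\big(T^{-1+(\si_1+\si_2)}\big)$, which is the claim. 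The improvement over Theorem \ref{T5-1}~(B4), which yields only the exponent $\tfrac12(1-\si)$ when $m=2$, comes precisely from retaining the exact Fej\'er weight $1-|x|/T$ rather than bounding the composed kernel crudely.
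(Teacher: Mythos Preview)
Your proof is correct but follows a different route from the paper. The paper's argument is entirely modular: it invokes Lemma~\ref{R1} to place each $h_i$ in $\Lip(\mathbb{R};p_i,1/p_i-\sigma_i)$ for appropriate conjugate exponents $p_i$ with $1/p_1+1/p_2=1$, so that $\gamma_1+\gamma_2=1-(\sigma_1+\sigma_2)<1$, and then reads off the bound from Theorem~\ref{T5-2}. That proof of Theorem~\ref{T5-2} in turn uses the Fej\'er-kernel identity (\ref{F15}) on the \emph{spectral} side, bounding the integral $\int F_T(u)\,\omega_{p_1}(h_1;u)\,\omega_{p_2}(h_2;u)\,du$ via H\"older.

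You instead work on the \emph{time} side: you write the trace as $\int_{-T}^T(1-|x|/T)\widehat h_1(x)\widehat h_2(x)\,dx$ and derive the pointwise Fourier decay $|\widehat h_i(x)|\le C|x|^{\sigma_i-1}$ directly from the hypotheses (\ref{prime new}) by splitting and integrating by parts. This is exactly the strategy the paper employs for the more specialized Theorem~\ref{M5} (see (\ref{s7})--(\ref{s11}) and Lemma~\ref{sl1}), so your argument can be seen as lifting that computation to the general setting of (B4). What you gain is a self-contained proof that bypasses the Lipschitz machinery and Theorem~\ref{T5-2}; what the paper gains is reusability, since Lemma~\ref{R1} and Theorem~\ref{T5-2} are stated separately and serve other purposes in the paper as well.
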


\begin{rem}
{\rm It would be of interest to prove the continuous analogs of
Dahl\-haus theorem (Theorem \ref{T2}) and Taniguchi theorem
(Theorem \ref{T4}) for Toeplitz operators.}
\end{rem}

\section{Applications to Stationary Processes}
\label{ASP}
In this section we provide some applications of the trace problem
to discrete- and continuous-time stationary processes:
ARFIMA and Fractional Riesz-Bessel motions;
central and non-central limit theorems, Berry-Ess\'een bounds,
and large deviations for Toeplitz quadratic forms and functionals.

\subsection{Applications to ARFIMA time series and fractional Riesz-Bessel motions}

\n In this subsection we apply the results of Sections \ref{ATM} and \ref{ATO}
to the important special cases where the generating functions
are spectral densities of a discrete-time ARFIMA$(0,d,0)$ stationary
processes or continuous-time stationary fractional Riesz-Bessel motions.

We use the following notation: $m=2\nu$;
\beaa
&&h_1(\la)= h_3(\la)=\cdots =h_{2\nu-1}(\la):=f_1(\la)\\
&&h_2(\la)= h_4(\la)=\cdots =h_{2\nu}(\la):=f_2(\la);
\eeaa
and
\bea
\label{m4-4}
&&S_{\nu,A}(T)=\frac1T\tr[A_T(f_1)A_T(f_2)]^\nu,\\
\label{m4-5}
&&\Delta_{\nu,A}(T):=
\left|S_{\nu,A}(T)- (2\pi)^{2\nu-1}\int_{\Lambda}[f_1(\la)f_2(\la)]^\nu\,d\la\right|,
\eea
where either $A_T(f_i)=B_T(f_i)$ and $\Lambda=\mathbb{T}$
or $A_T(f_i)=W_T(f_i)$ and $\Lambda=\mathbb{R}$, $i=1,2$.

\subsubsection{Applications to ARFIMA time series}

The next theorem gives an error bound for $\De_{2,B}(T)$
in the case where the corresponding Toeplitz matrices are
generated by spectral densities of
two discrete-time ARFIMA$(0,d,0)$ stationary processes.
\begin{thm}
\label{FM}
Let $f_i(\la)$, $i=1,2$, be the spectral density functions
of two ARFIMA$(0,d,0)$ stationary processes defined as
\beq
\label{fs1}
f_i(\la)=\frac{\sigma^2_i}{2\pi}\left|1-e^{i\lambda}\right|^{-2d_i}, \q i=1,2
\eeq
with $0<\sigma^2_i<\infty$ and $0<d_i<1/2$. 
Then under  $d:\,=d_1+d_2<1/(2\nu)$, $\nu\in\mathbb{N}$, for any $\varepsilon >0$,
\begin{equation}
\label{t20}
\Delta_{\nu,B}(T)=O\left(T^{-\gamma+\varepsilon}\right)
\quad\text{as}\quad T\to\infty
\end{equation}
with
\begin{equation}
\label{d21}
\g=\frac1{2\nu}-(d_1+d_2).
\end{equation}
\end{thm}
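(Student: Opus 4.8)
The plan is to obtain Theorem \ref{FM} as a direct application of Theorem \ref{T3}\,(B5), after checking that the ARFIMA spectral densities satisfy its hypotheses with the exponents read off from their behaviour at the origin. First I would put $m=2\nu$ and take the collection $\mathcal{H}=\{h_1,\ldots,h_{2\nu}\}$ with $h_1=h_3=\cdots=h_{2\nu-1}=f_1$ and $h_2=h_4=\cdots=h_{2\nu}=f_2$. Then
$$
S_{B,\mathcal{H}}(T)=\frac1T\tr\Big[\prod_{i=1}^{2\nu}B_T(h_i)\Big]
=\frac1T\tr[B_T(f_1)B_T(f_2)]^\nu=S_{\nu,B}(T),
$$
and $M_{\mathbb{T},\mathcal{H}}=(2\pi)^{2\nu-1}\int_{\mathbb{T}}[f_1(\la)f_2(\la)]^\nu\,d\la$, so that $\De_{B,\mathbb{T},\mathcal{H}}(T)$ coincides with $\De_{\nu,B}(T)$. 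The whole matter is therefore reduced to verifying the pointwise bounds in (B5).

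Next I would establish those bounds. Since $|1-e^{i\la}|=2|\sin(\la/2)|$, we have $f_i(\la)=\frac{\sigma_i^2}{2\pi}\big(2|\sin(\la/2)|\big)^{-2d_i}$, which is real, symmetric, and differentiable on $\mathbb{T}\setminus\{0\}$. As $\la\to0$ one has $2|\sin(\la/2)|\sim|\la|$, whence $f_i(\la)\sim\frac{\sigma_i^2}{2\pi}|\la|^{-2d_i}$, and differentiating gives $|f_i'(\la)|\sim C|\la|^{-2d_i-1}$. Away from the origin $f_i$ and $f_i'$ are bounded, and since $|\la|^{-\al}\ge\pi^{-\al}$ on $\mathbb{T}$ any bounded quantity is dominated by a constant times the corresponding power of $|\la|$; hence on all of $\mathbb{T}\setminus\{0\}$,
$$
|f_i(\la)|\le C_{1i}|\la|^{-2d_i},\qquad |f_i'(\la)|\le C_{2i}|\la|^{-(2d_i+1)},\qquad i=1,2.
$$
Thus the relevant exponents are $\al_i=2d_i$, and $0<d_i<1/2$ forces $0<\al_i<1$ as (B5) requires.

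Finally I would check the summability condition and read off the rate. The collection contains $\nu$ copies of $f_1$ and $\nu$ copies of $f_2$, so the total exponent is
$$
\al=\sum_{i=1}^{2\nu}\al_i=\nu\cd2d_1+\nu\cd2d_2=2\nu(d_1+d_2)=2\nu d,
$$
and the hypothesis $d<1/(2\nu)$ is exactly $\al<1$. Theorem \ref{T3}\,(B5) then yields $\De_{\nu,B}(T)=O(T^{-\g+\vs})$ with $\g=\frac1{m}(1-\al)=\frac1{2\nu}(1-2\nu d)=\frac1{2\nu}-(d_1+d_2)$, which is precisely (\ref{d21}), establishing (\ref{t20}). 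Because this is a verbatim specialization of (B5), there is no genuine analytic obstacle; the only point requiring a little care is pinning down the derivative asymptotics at the origin and confirming that the two power bounds extend from a neighbourhood of $0$ to the whole of $\mathbb{T}\setminus\{0\}$, which is routine since each $f_i$ is smooth and strictly positive off the origin.
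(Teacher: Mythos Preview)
Your proposal is correct and follows essentially the same route as the paper: both reduce the claim to Theorem~\ref{T3}\,(B5) by verifying the bounds $|f_i(\la)|\le C|\la|^{-2d_i}$ and $|f_i'(\la)|\le C|\la|^{-(2d_i+1)}$ on $\mathbb{T}\setminus\{0\}$, then reading off $\al_i=2d_i$ and $\g=\frac1{2\nu}(1-2\nu d)$. Your write-up is in fact a bit more explicit than the paper's in spelling out the choice $m=2\nu$, the identification $\De_{B,\mathbb{T},\mathcal{H}}(T)=\De_{\nu,B}(T)$, and the computation of the total exponent $\al=2\nu d$.
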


\begin{proof} Assuming  that $\lambda\in (0,\pi]$
(the case $\lambda\in [-\pi,0)$ is treated similarly),
and taking into account $|1-e^{i\lambda}|=2\sin(\lambda/2)$,
we have for $i=1,2$
\bea
\nonumber
&&f_i(\lambda)= \frac{\sigma^2_i}{2\pi} \cdot 2^{-2d_i }
\left[\sin\frac \lambda 2\right]^{-2d_i},\\
&&f_i^\prime(\lambda)=\frac{\sigma^2_i}{2\pi} \cdot \left[-2d_i2^{-2d_i-1}
\label{t200}
\left(\sin\frac \lambda 2\right)^{-2d_i-1}\cos\frac \lambda 2\right].
\eea
It is clear that the conditions of Theorem \ref{T3} {\bf(B5)} are satisfied
with  $\alpha_i=2d_i$ and $C_{1i}=C_{2i} =\sigma_i^2$,  $i=1,2$,
and the result follows.
\end{proof}

The next theorem, which was proved in Lieberman and Phillips \cite{LP},
gives an explicit second-order asymptotic expansion for $S_{1,B}(T)$
in the case where the Toeplitz matrices are generated by the spectral
densities given by (\ref{fs1}), and shows that in this special case
a second-order asymptotic expansion successfully removes the singularity
and delivers a substantially improved approximation.

\begin{thm}
\label{FM5}
Let $f_i(\la)$, $i=1,2,$ be the spectral density functions
of two ARFIMA$(0,d,0)$ stationary processes defined by (\ref{fs1})
with $0<\sigma^2_i<\infty$ and $0<d_i<1/2$, $i=1,2$.
Then under  $d:\,=d_1+d_2<1/2$
\bea
\label{fs2}
\nonumber
S_{1,B}(T):&=&\frac1T\tr[B_T(f_1)B_T(f_2)]\\
&=&2\pi\inl f_1(\la)f_2(\la)\,d\la -
\frac{C(d_1,d_2)}{T^{1-2d}}
+o\left(\frac{1}{T^{1-2d}}\right)
\eea
as $T\to\f$, where
\beq
\label{fs3}
C(d_1,d_2)=
\frac{2\si_1^2\si_2^2\pi^2}{\cos(\pi d_1)\cos(\pi d_2)\G(2d_1)\G(2d_2)}
\cdot\frac{1}{2d(1-2d)}.
\eeq
\end{thm}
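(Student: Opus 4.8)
The plan is to exploit the fact that for $m=2$ the trace collapses to a single lag sum that can be analysed directly from the explicit autocovariances of the ARFIMA model. Writing $\widehat f_i(k)$ for the Fourier coefficients in (\ref{FC}), the entries of $B_T(f_i)$ are $\widehat f_i(s-t)$, so
\[
S_{1,B}(T)=\frac1T\tr[B_T(f_1)B_T(f_2)]
=\frac1T\sum_{s,t=1}^T\widehat f_1(s-t)\,\widehat f_2(t-s).
\]
Since $f_2$ is real and symmetric, $\widehat f_2$ is even, and collecting the $T-|k|$ pairs with $s-t=k$ gives
\[
S_{1,B}(T)=\sum_{|k|<T}\Bigl(1-\frac{|k|}{T}\Bigr)\widehat f_1(k)\,\widehat f_2(k).
\]
By Parseval's identity $\sum_{k=-\f}^{\f}\widehat f_1(k)\widehat f_2(k)=2\pi\inl f_1(\la)f_2(\la)\,d\la$, which is exactly the leading term in (\ref{fs2}). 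Subtracting, the whole problem reduces to the asymptotic evaluation of
\[
S_{1,B}(T)-2\pi\inl f_1(\la)f_2(\la)\,d\la
=-\frac1T\sum_{|k|<T}|k|\,\widehat f_1(k)\widehat f_2(k)
-\sum_{|k|\ge T}\widehat f_1(k)\widehat f_2(k).
\]

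Next I would bring in the closed-form ARFIMA$(0,d,0)$ autocovariances. For $f_i$ as in (\ref{fs1}) one has Hosking's formula
\[
\widehat f_i(k)=\frac{\si_i^2\,\Gamma(1-2d_i)}{\Gamma(d_i)\,\Gamma(1-d_i)}\cdot\frac{\Gamma(k+d_i)}{\Gamma(k+1-d_i)},
\]
and hence, via the standard asymptotics of the Gamma ratio, $\widehat f_i(k)=c_i\,k^{2d_i-1}\bigl(1+O(k^{-1})\bigr)$ as $k\to\f$, with $c_i=\frac{\si_i^2\Gamma(1-2d_i)}{\Gamma(d_i)\Gamma(1-d_i)}$. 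Consequently $\widehat f_1(k)\widehat f_2(k)=c_1c_2\,k^{2d-2}+O(k^{2d-3})$, where $d=d_1+d_2$. Using Hosking's exact formula here is convenient because it removes any need to approximate $2\sin(\la/2)$ by $\la$: the only approximation left is the Gamma-ratio expansion.

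I would then evaluate the two sums to leading order by comparison with integrals (Euler--Maclaurin). Because $0<2d<1$, the first sum obeys $\frac1T\sum_{0<|k|<T}|k|^{2d-1}\sim\frac{T^{2d-1}}{d}$, while, since $2d-2<-1$, the tail obeys $\sum_{|k|\ge T}|k|^{2d-2}\sim\frac{2\,T^{2d-1}}{1-2d}$. Multiplying by $c_1c_2$ and combining the two contributions with the signs above yields
\[
S_{1,B}(T)-2\pi\inl f_1(\la)f_2(\la)\,d\la
\sim-c_1c_2\Bigl(\frac1d+\frac{2}{1-2d}\Bigr)T^{2d-1}
=-\frac{c_1c_2}{d(1-2d)}\,T^{-(1-2d)},
\]
which identifies $C(d_1,d_2)=\frac{c_1c_2}{d(1-2d)}$. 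The stated form (\ref{fs3}) is then the result of rewriting $c_1$ and $c_2$ through the reflection identities $\Gamma(d)\Gamma(1-d)=\pi/\sin(\pi d)$ and $\Gamma(2d)\Gamma(1-2d)=\pi/\sin(2\pi d)$ together with $\sin(2\pi d)=2\sin(\pi d)\cos(\pi d)$, which express each $c_i$ in terms of $\cos(\pi d_i)$ and $\Gamma(2d_i)$ (the exact numerical prefactor being governed by the Fourier/spectral normalization in (\ref{fs1})).

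The \emph{main obstacle} is not this leading-order arithmetic but making the $o\bigl(T^{-(1-2d)}\bigr)$ remainder rigorous. One must verify that both (i) the $O(k^{-1})$ relative corrections in the Gamma-ratio asymptotics of $\widehat f_i(k)$ and (ii) the Euler--Maclaurin remainders in the two power sums contribute only at lower order. Here $d<1/2$ provides the safety margin: the correction to the product is $O(k^{2d-3})$, whose partial sums over $|k|<T$ converge, so after division by $T$ it contributes $O(T^{-1})=o\bigl(T^{-(1-2d)}\bigr)$; the corresponding tail correction is $O(T^{2d-2})=o\bigl(T^{2d-1}\bigr)$, and the Euler--Maclaurin errors are likewise $O(T^{-1})$. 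Tracking all of these subleading terms carefully, and confirming that they stay strictly below the order $T^{-(1-2d)}$ uniformly in the range $0<d<1/2$, is the step requiring the most care.
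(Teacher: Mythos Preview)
Your approach is correct and is essentially the discrete analogue of the proof the paper gives for the continuous version (Theorem~\ref{M5}): write the trace via the Fej\'er-weighted lag sum, split off the Parseval limit to obtain the two remainder sums $I_1$ (tail) and $I_2$ (weighted partial sum), and evaluate each using the power-law asymptotics of the autocovariances. The paper itself does not supply a proof of Theorem~\ref{FM5} but attributes it to Lieberman and Phillips, noting in Remark~\ref{FM-rem} that it was obtained by ``direct calculations using the explicit forms of functions $f_i$'', which is exactly what you do via Hosking's formula and the Gamma-ratio expansion.
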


\begin{rem}
\label{FM-rem}
{\rm The asymptotic relation (\ref{fs2}) in Lieberman and Phillips
\cite{LP} was established by direct calculations using the explicit
forms of functions $f_i$ given by (\ref{fs1}). On the other hand,
as it follows from (\ref{t200}), the functions $f_{i}$ ($i=1,2$)
satisfy conditions of Theorem \ref{T4-2} with $\al_i=2d_i$ ($i=1,2$),
and hence (\ref{fs2}) is a special case of Theorem  \ref{T4-2}.}
\end{rem}

\subsubsection{Applications to fractional Riesz-Bessel motions}

Now we assume that the underlying model is a continuous-time
stationary process specified by a fractional Riesz-Bessel motion.
The following result is an immediate consequence of Theorem \ref{T5} {\bf(A1)}.
\begin{thm}
\label{M51}
Let $f_1(\la)=f(\la)$ be the spectral density
of a fractional Riesz-Bessel motion defined by (\ref{rb1}),
and let $f_2(\la)=g(\la)$ be an integrable real symmetric function on
$\mathbb{R}$.
If for some $p, q\ge 1$ with $1/p+1/q\le1/\nu$ ($\nu\in\mathbb{N}$) we have
$g(\la)\in L^q(\mathbb{R})$ and $0<\al<1/(2p),$ $\al+\be>1/{2},$
then
$$
\De_{\nu,W}(T)=o(1) \q {\rm as} \q T\to\f.
$$
\end{thm}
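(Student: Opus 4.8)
The plan is to verify that the hypotheses of Theorem~\ref{M51} are precisely an instance of condition \textbf{(A1)} in Theorem~\ref{T5}, applied to the collection $\mathcal{H}=\{h_1,h_3,\ldots,h_{2\nu-1},h_2,h_4,\ldots,h_{2\nu}\}$ with $h_{2j-1}=f$ and $h_{2j}=g$ for $j=\overline{1,\nu}$. Since $m=2\nu$ and the product $\prod_{i=1}^m W_T(h_i)$ appearing in $S_{\nu,W}(T)$ (see~(\ref{m4-4}) with $A_T=W_T$) is exactly the product considered in Theorem~\ref{T5}, the statement $\De_{\nu,W}(T)=o(1)$ follows as soon as I exhibit exponents $p_i>1$ with $\sum_{i=1}^m 1/p_i\le 1$ such that each $h_i\in L^1(\mathbb{R})\cap L^{p_i}(\mathbb{R})$.

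First I would record the integrability and $L^p$-membership of $f$. From~(\ref{rb1}) we have $f(\la)=c\,|\la|^{-2\al}(1+\la^2)^{-\be}$, which behaves like $|\la|^{-2\al}$ near the origin and like $|\la|^{-2(\al+\be)}$ at infinity. Under the stated hypotheses $0<\al<1/(2p)$ and $\al+\be>1/2$, I would check that $f\in L^1(\mathbb{R})\cap L^{p}(\mathbb{R})$: near zero, $f\in L^p$ requires $2\al p<1$, i.e.\ $\al<1/(2p)$, which is assumed; at infinity, $f\in L^p$ requires $2(\al+\be)p>1$, and $f\in L^1$ requires $2(\al+\be)>1$, both guaranteed by $\al+\be>1/2$ (the $L^p$ condition at infinity follows since $p\ge 1$). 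For $g$ the hypothesis directly gives $g\in L^q(\mathbb{R})$, and I would note that $g$, being a spectral density (hence integrable by assumption), lies in $L^1(\mathbb{R})$ as well.

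Next I would assign the exponents: set $p_{2j-1}=p$ and $p_{2j}=q$ for each $j=\overline{1,\nu}$. Then
\[
\sum_{i=1}^{m}\frac1{p_i}=\nu\left(\frac1p+\frac1q\right)\le\nu\cdot\frac1\nu=1,
\]
where the inequality uses the hypothesis $1/p+1/q\le 1/\nu$. Thus every $h_i$ lies in $L^1(\mathbb{R})\cap L^{p_i}(\mathbb{R})$ with $p_i>1$ and the reciprocal exponents sum to at most $1$, so condition \textbf{(A1)} of Theorem~\ref{T5} is satisfied and the conclusion follows.

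The only genuinely delicate point—the \emph{main obstacle}—is the verification of $f\in L^p(\mathbb{R})$ near the origin, since this is where the strict inequality $\al<1/(2p)$ is used and is the one place the singularity of the fractional Riesz-Bessel spectral density enters. The behavior at infinity, controlled by the Bessel-potential factor $(1+\la^2)^{-\be}$ with $\al+\be>1/2$, is comparatively routine, and the remaining bookkeeping is purely a matter of matching exponents. Everything else reduces to citing Theorem~\ref{T5}~\textbf{(A1)} verbatim.
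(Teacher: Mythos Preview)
Your proposal is correct and follows the same route as the paper, which simply declares the result an immediate consequence of Theorem~\ref{T5}~\textbf{(A1)}; you have merely (and appropriately) spelled out the verification that $f\in L^1(\mathbb{R})\cap L^p(\mathbb{R})$ under $\al<1/(2p)$, $\al+\be>1/2$, and that the $2\nu$ reciprocal exponents sum to at most $1$. One cosmetic point: $g$ is assumed integrable directly in the hypothesis, not because it is a spectral density.
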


\begin{thm}
\label{M52}
Let $f_1(\la)=f(\la)$ be as in (\ref{rb1}) with $0<\al<1/({2p})$ and
$\al+\be>1/{2}$, and let $f_2(\la)=g(\la)$ be an integrable real
symmetric function from the
class $\Lip(q,1/p-2\al)$ with $1/p+1/q\le1/\nu$, $\nu\in\mathbb{N}$.
Then for any $\varepsilon >0$
\begin{equation}
\label{t020}
\Delta_{\nu,W}(T)=O\left(T^{-\gamma+\varepsilon}\right)
\quad\text{as}\quad T\to\infty.
\end{equation}
with $\g=1/p-2\al$.
\end{thm}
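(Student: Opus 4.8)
The plan is to recognize $\Delta_{\nu,W}(T)$ as the error $\Delta_{W,\mathbb{R},\mathcal{H}}(T)$ associated with the collection $\mathcal{H}=\{h_1,\ldots,h_m\}$ of $m=2\nu$ functions in which $h_{2j-1}=f$ and $h_{2j}=g$ for $j=\overline{1,\nu}$, and then to invoke Theorem \ref{T5-1}\,{\bf(B3)}. That theorem requires every $h_i$ to lie in an $L^{p_i}$-Lipschitz class with one common exponent $\g$ and with $\sum_{i=1}^m 1/p_i\le 1$. I would assign the index $p_i=p$ to each of the $\nu$ copies of $f$ and $p_i=q$ to each of the $\nu$ copies of $g$, so that $\sum_{i=1}^m 1/p_i=\nu(1/p+1/q)\le\nu\cd(1/\nu)=1$ by the standing hypothesis, and take the common exponent $\g=1/p-2\al$. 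Since $0<\al<1/(2p)$ and $p\ge 1$ force $0<\g<1/p\le1$, the requirement $\g\in(0,1]$ holds, and Theorem \ref{T5-1}\,{\bf(B3)} then yields $\Delta_{\nu,W}(T)=O(T^{-\g+\vs})$ with $\g=1/p-2\al$, as claimed.

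Since $g\in\Lip(q,1/p-2\al)$ is assumed, the only substantive task is to verify that the fractional Riesz--Bessel density $f(\la)=c\,|\la|^{-2\al}(1+\la^2)^{-\be}$ belongs to $\Lip(\mathbb{R};p,1/p-2\al)$. First I would check $f\in L^p(\mathbb{R})$: near the origin the factor $(1+\la^2)^{-\be}$ is bounded, so $|f(\la)|^p\lesssim|\la|^{-2\al p}$, which is integrable because $\al<1/(2p)$; while for $|\la|>1$ one has $|f(\la)|^p\lesssim|\la|^{-2(\al+\be)p}$, integrable because $\al+\be>1/2\ge1/(2p)$ for $p\ge1$.

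The heart of the argument is the $L^p$-modulus-of-continuity estimate $\om_p(f,\de)=O(\de^{1/p-2\al})$. I would split $\itf|f(\la+h)-f(\la)|^p\,d\la$ at $|\la|=2h$. On the near-singularity part $|\la|\le 2h$ I would use $|f(\la+h)-f(\la)|^p\le 2^{p-1}(|f(\la+h)|^p+|f(\la)|^p)$ and integrate the $|\la|^{-2\al}$ singularity to obtain a bound $O(h^{1-2\al p})$. On the far part $|\la|>2h$ I would apply the mean value inequality together with the derivative bound $|f'(\la)|\lesssim|\la|^{-2\al-1}$ near the origin (with the faster polynomial decay at infinity supplied by $\al+\be>1/2$), giving $h^p\int_{|\la|>2h}|\la|^{-(2\al+1)p}\,d\la=O(h^{1-2\al p})$, where the integral converges at its lower endpoint because $(2\al+1)p>1$. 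Taking $p$-th roots yields $\om_p(f,\de)=O(\de^{1/p-2\al})$, so $f\in\Lip(\mathbb{R};p,1/p-2\al)$, and the theorem follows from the reduction in the first paragraph.

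The main obstacle is precisely this modulus-of-continuity estimate: one must confirm that neither the smooth Bessel factor $(1+\la^2)^{-\be}$ nor the decay at infinity degrades the exponent $1/p-2\al$ dictated by the origin singularity, and that in the derivative the singular term $|\la|^{-2\al-1}$ dominates the subordinate term $|\la|^{-2\al+1}$ arising from differentiating $(1+\la^2)^{-\be}$. Once $f\in\Lip(\mathbb{R};p,1/p-2\al)$ is in hand, I note that Theorem \ref{T5-1}\,{\bf(B4)} cannot be used here, since $g$ is only assumed Lipschitz rather than subject to the pointwise derivative bounds of {\bf(B4)}; this is why the Lipschitz route through {\bf(B3)} is the natural one, and why the resulting rate is $\g=1/p-2\al$ rather than the $\tfrac1m(1-\si)$ rate of {\bf(B4)}.
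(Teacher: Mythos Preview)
Your proposal is correct and follows essentially the same route as the paper: reduce to Theorem \ref{T5-1}\,{\bf(B3)} with the $2\nu$-fold collection $(f,g,\ldots,f,g)$ and exponents $p,q,\ldots,p,q$, and verify $f\in\Lip(\mathbb{R};p,1/p-2\al)$ by the near/far splitting at $|\la|\sim h$ combined with the mean value bound on $f'$. The paper packages that Lipschitz verification as a separate lemma (Lemma \ref{R1}) for any $f$ satisfying the two-sided power bounds (\ref{prime1}) with $\sigma=2\al$, $\de=2(\al+\be)$, but the underlying computation is exactly what you outline.
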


\begin{proof}
The result follows from Theorem \ref{T5-1} {\bf(B3)} and
the following lemma, which is proved in the Appendix.
\begin{lem}
\label{R1}
Let $p>1$,  $0<\sigma<1/{p}$  and let $f(\la)$ be
differentiable function defined on $\mathbb R\setminus\{0\}$, such
that for some constant  $C >0$
\begin{equation}\label {prime1}
|f(\lambda)|\le
\begin{cases}
C|\lambda|^{-\sigma}& \text {if} \q |\lambda|\le 1\cr
C|\lambda|^{-\de}& \text {if} \q |\lambda|>1
\end{cases}, \quad
|f^\prime(\lambda)|\le
\begin{cases}
C|\lambda|^{-\sigma-1}& \text {if} \q |\lambda|\le 1\cr
C|\lambda|^{-\de-1}& \text {if} \q |\lambda|>1
\end{cases}.
\end{equation}
Then $f\in \Lip(p,1/p-\sigma)$.
\end{lem}

Now to prove (\ref{t020}) observe that by (\ref{rb1}),
\beq
\label{rb10}
f^\prime(\la)=-\frac{2\al+2(\al+\be)\la^2}{\la^{2\al+1}(1+\la)^{\be+1}}.
\eeq
It follows from (\ref{rb1}) and (\ref{rb10}) that the functions
 $f$ and $f^\prime$  satisfy conditions (\ref{prime1})
 with $\sigma=2\al$  and $\delta =2\alpha +2\beta$.
Hence by Lemma \ref{R1}, $f(\la)\in\Lip(p, 1/p-2\al).$
Therefore the result follows from Theorem \ref{T5-1} {\bf(B3)}.
\end{proof}

\begin{thm}
\label{M53}
Let $f_i(\la)$, $i=1,2,$ be the spectral density functions
of two fractional Riesz -Bessel motions defined as
\beq
\label{s1}
 f_i(\la)=\frac {C_i} {|\la|^{2\al_i}{(1+\la^2)^{\be_i}}}, \q
 0<\al_i<1/2, \q \al_i+\be_i>1/2, \q i=1, 2.
 \eeq
If \  $\al_1+\al_2<1/({2\nu})$, then (\ref{t020}) holds for any
$\varepsilon >0$ with
\beq \label{tt3}
\g=\frac1{2\nu}-(\al_1+\al_2).
\eeq
\end{thm}

\begin{proof}
It follows from (\ref{rb1}) and (\ref{rb10}) that the functions
$f_1$ and $f_2$  satisfy conditions of Theorem  \ref{T5-1} {\bf(B4)}
with $\sigma_i=2\alpha_i$ and $\delta_i=2\alpha_i+2\beta_i$, $i=1,2$,
and the result follows.
\end{proof}

The next theorem, which is a continuous version of Theorem \ref{FM5},
contains an explicit second-order asymptotic expansion for $S_{1,W}(T)$
in the case where the Toeplitz operators are generated by the spectral
densities given by (\ref{s1}), and shows that in this special case
a second-order asymptotic expansion successfully removes the singularity
and delivers a substantially improved approximation.
\begin{thm}
\label{M5}
Let $f_i(\la)$, $i=1,2,$ be as in (\ref{s1}).
Then under  $\al:\,=\al_1+\al_2<1/2$
 \bea
 \label{s2}
 \nonumber
 S_{1,W}(T):&=&\frac1T\tr[W_T(f_1)W_T(f_2)]\\
 &=&2\pi\itf f_1(\la)f_2(\la)\,d\la -
 \frac{C(\al_1,\al_2)}{T^{1-2\al}}
 +o\left(\frac{1}{T^{1-2\al}}\right)
 \eea
 as $T\to\f$, where
 \beq
 \label{s3}
 C(\al_1,\al_2)=
 \frac{2 C_1C_2\pi^2}{\cos(\pi\al_1)\cos(\pi\al_2)\G(2\al_1)\G(2\al_2)}
 \cdot\frac{1}{2\al(1-2\al)}.
 \eeq
 \end{thm}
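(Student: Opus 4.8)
The plan is to collapse the trace into a one–dimensional integral and then read off the second–order term from the behaviour of $\widehat f_1,\widehat f_2$ at infinity, which is dictated by the singularities of $f_1,f_2$ at the origin. First, since $f_1,f_2$ are real and even, so are $\widehat f_1,\widehat f_2$, and $W_T(f_i)$ has kernel $\widehat f_i(t-s)$ on $[0,T]^2$; composing and taking the trace gives
\[
\tr[W_T(f_1)W_T(f_2)]
=\int_0^T\!\!\int_0^T \widehat f_1(t-s)\,\widehat f_2(t-s)\,ds\,dt
=\int_{-T}^{T}(T-|u|)\,\Phi(u)\,du ,
\]
where $\Phi(u):=\widehat f_1(u)\widehat f_2(u)$, using the evenness of $\widehat f_2$ and the substitution $u=t-s$. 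The conditions $0<\al_i<1/2$, $\al_i+\be_i>1/2$ give $f_i\in L^1(\mathbb R)$, so by the convolution theorem $\Phi=\widehat{f_1*f_2}$; since $\al<1/2$ will be seen to force $\Phi\in L^1(\mathbb R)$, Fourier inversion together with the evenness of $f_2$ yields $\itf\Phi(u)\,du=2\pi(f_1*f_2)(0)=2\pi\itf f_1 f_2\,d\la$, the leading term in (\ref{s2}). Dividing by $T$ and using that $\Phi$ is even, I am left with
\[
S_{1,W}(T)-2\pi\itf f_1 f_2\,d\la
=-\,2\int_T^{\f}\Phi(u)\,du-\frac{2}{T}\int_0^T u\,\Phi(u)\,du .
\]

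Next I would determine the tail of $\Phi$. Near the origin $f_i(\la)=C_i|\la|^{-2\al_i}(1+O(\la^2))$, and the classical transform $\itf|\la|^{-2\al}e^{i\la u}\,d\la=2\G(1-2\al)\sin(\pi\al)|u|^{2\al-1}$ (valid for $0<\al<1/2$), combined with an Abelian argument showing that the smooth, integrable factor $(1+\la^2)^{-\be_i}=1+O(\la^2)$ perturbs this only at strictly lower order, gives
\[
\widehat f_i(u)=2C_i\,\G(1-2\al_i)\sin(\pi\al_i)\,|u|^{2\al_i-1}+O\!\left(|u|^{2\al_i-3}\right),
\qquad |u|\to\f .
\]
Since $\widehat f_i$ is bounded near $u=0$ (because $f_i\in L^1$), multiplying the two expansions produces $\Phi(u)=K|u|^{2\al-2}+O(|u|^{2\al-4})$ with $\al=\al_1+\al_2$ and $K=4C_1C_2\,\G(1-2\al_1)\G(1-2\al_2)\sin(\pi\al_1)\sin(\pi\al_2)$; as $2\al-2<-1$ this confirms $\Phi\in L^1(\mathbb R)$.

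Finally I would substitute this expansion into the two error integrals. Because $\al<1/2$, the remainder $O(|u|^{2\al-4})$ contributes $O(T^{2\al-3})$ to the tail integral and $O(T^{-1})$ to the averaged integral, both $o(T^{2\al-1})$, while the main terms give $2\int_T^\f Ku^{2\al-2}\,du=\tfrac{2K}{1-2\al}T^{2\al-1}$ and $\tfrac{2}{T}\int_0^T Ku^{2\al-1}\,du=\tfrac{K}{\al}T^{2\al-1}+O(T^{-1})$. Adding these,
\[
S_{1,W}(T)-2\pi\itf f_1 f_2\,d\la
=-\left(\frac{2}{1-2\al}+\frac1\al\right)K\,\frac{1}{T^{1-2\al}}+o\!\left(\frac{1}{T^{1-2\al}}\right)
=-\frac{K}{\al(1-2\al)}\,\frac{1}{T^{1-2\al}}+o\!\left(\frac{1}{T^{1-2\al}}\right),
\]
so that $C(\al_1,\al_2)=K/[\al(1-2\al)]$. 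The reflection formula $\G(2\al_i)\G(1-2\al_i)=\pi/\sin(2\pi\al_i)$ together with $\sin(2\pi\al_i)=2\sin(\pi\al_i)\cos(\pi\al_i)$ turns each factor $2\G(1-2\al_i)\sin(\pi\al_i)$ into $\pi/[\cos(\pi\al_i)\G(2\al_i)]$, and substituting these into $K$ reproduces exactly the constant (\ref{s3}).

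The main obstacle is the Abelian step: rigorously confirming that the subleading contributions to $\widehat f_i$, and hence to $\Phi$, remain $o(T^{2\al-1})$ after the averaging $\tfrac1T\int_0^T u\,\Phi(u)\,du$. This requires uniform control of $\Phi$ on all of $[0,\f)$ rather than merely its leading asymptotics, and it is here that the explicit second-order structure of the Riesz–Bessel densities (the $1+O(\la^2)$ expansion at the origin and the $|\la|^{-2\al_i-2\be_i}$ decay at infinity) is essential.
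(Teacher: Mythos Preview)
Your proof is correct and follows the same route as the paper: collapse the trace to $\int_{-T}^T(1-|u|/T)\,r_1(u)r_2(u)\,du$, split the discrepancy into the tail $\int_{|u|>T}r_1r_2\,du$ and the weighted piece $\frac1T\int_{-T}^T|u|\,r_1r_2\,du$, and evaluate both from the large--$|u|$ asymptotic $r_i(u)\sim\dfrac{\pi C_i}{\cos(\pi\al_i)\Gamma(2\al_i)}\,|u|^{2\al_i-1}$. The paper isolates this asymptotic as a separate lemma proved by a change of variable and the Bojanic--Karamata theorem, needing only the $(1+o(1))$ form; your sharper $O(|u|^{2\al_i-3})$ remainder is therefore unnecessary, and the Abelian concern you flag is handled simply by splitting $[0,T]=[0,M]\cup[M,T]$ with $M$ fixed, the first piece contributing $O(T^{-1})$.
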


The proof is based on the following lemma, which contains an
asymptotic formula for the covariance function of a fRBm process.
It is proved in the Appendix.

\begin{lem}
\label{sl1}
Let $f(\la)$ be as in (\ref{rb1}) with $0<\al<1/2$ and $\be>1/2$,
and let $r(t):\,=\hat f(t)$ be the Fourier transform of $f(\la)$.
Then
\beq
\label{s4}
r(t)=t^{2\al-1}\cd\frac{\pi C}{\cos(\pi\al)\G(2\al)}
\cdot\left(1+o(1)\right)\q {\rm as} \q t\to\f.
\eeq
\end{lem}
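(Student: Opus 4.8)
The plan is to reduce the $t\to\f$ asymptotics of $r(t)=\hat f(t)$ to the behaviour of $f$ at its only singularity, the origin, where $f(\la)\sim C|\la|^{-2\al}$. Since $f$ is even, I would first write
\[
r(t)=\itf e^{i\la t}f(\la)\,d\la = 2C\int_0^\f \cos(\la t)\,\la^{-2\al}(1+\la^2)^{-\be}\,d\la .
\]
Because $0<2\al<1$, the smooth factor $(1+\la^2)^{-\be}$ equals $1+O(\la^2)$ near $\la=0$ and decays at infinity, so it should affect only the lower-order term; the leading behaviour ought to be that of the pure power $\la^{-2\al}$.

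Next I would extract the exact constant. The classical cosine transform of a power gives, for $0<2\al<1$ and as an improper integral,
\[
\int_0^\f \la^{-2\al}\cos(\la t)\,d\la = \G(1-2\al)\sin(\pi\al)\,t^{2\al-1}.
\]
Applying the reflection formula $\G(1-2\al)\G(2\al)=\pi/\sin(2\pi\al)$ together with $\sin(2\pi\al)=2\sin(\pi\al)\cos(\pi\al)$ reduces the coefficient to $\pi/(2\cos(\pi\al)\G(2\al))$, so that multiplying by $2C$ yields precisely the claimed leading factor $\pi C/(\cos(\pi\al)\G(2\al))$.

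It then remains to show that replacing $\la^{-2\al}$ by $f(\la)/C=\la^{-2\al}(1+\la^2)^{-\be}$ costs only $o(t^{2\al-1})$. I would write the error as $2C\int_0^\f\cos(\la t)\,\la^{-2\al}[(1+\la^2)^{-\be}-1]\,d\la$ and split it at $\la=1$. On $[0,1]$ the bracket is $O(\la^2)$, so the integrand $\la^{-2\al}[(1+\la^2)^{-\be}-1]$ extends to a $C^1$ function vanishing at the origin---this is exactly where $\al<1/2$ enters, since it makes $\la^{2-2\al}$ continuously differentiable up to $0$---and one integration by parts gives $O(t^{-1})$. On $[1,\f)$ the integrand and its derivative are integrable (here $\be>1/2$ supplies the decay), and a second integration by parts again gives $O(t^{-1})$. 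Since $2\al>0$ forces $t^{-1}=o(t^{2\al-1})$, collecting the pieces would yield $r(t)=t^{2\al-1}\,\pi C/(\cos(\pi\al)\G(2\al))\,(1+o(1))$, which is (\ref{s4}).

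I expect the main obstacle to be bookkeeping rather than depth: pinning down the exact constant through the reflection formula, and---more delicately---justifying the integration by parts on $[0,1]$, where the regularity of $\la^{-2\al}[(1+\la^2)^{-\be}-1]$ at the origin must be checked carefully and rests squarely on the hypothesis $\al<1/2$.
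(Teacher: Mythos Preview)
Your approach is correct and yields the lemma, but it is genuinely different from the paper's.  The paper does not split off the pure power $\la^{-2\al}$; instead it substitutes $\la=1/(tu)$ to write
\[
r(t)=2C\,t^{2\al-1}\int_0^\infty L(tu)\,k(u)\,du,\qquad L(u)=\frac{u^{2\be}}{(1+u^2)^\be},\quad k(u)=u^{2\al-2}\cos(1/u),
\]
and then invokes the Bojanic--Karamata theorem (an Abelian theorem for integrals against slowly varying functions) to pass to the limit $\int_{0+}^{\infty-}k(u)\,du$, which is evaluated via the same cosine-transform identity you quote.  Your route is more elementary---no regular-variation machinery---and actually delivers the sharper remainder $O(t^{-1})$ rather than just $o(t^{2\al-1})$.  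The paper's route, on the other hand, would adapt immediately to any $f(\la)=|\la|^{-2\al}\ell(\la)$ with $\ell$ slowly varying at the origin, not only to the specific Bessel factor.

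One small correction to your $[1,\infty)$ estimate: the function $g(\la)=\la^{-2\al}\bigl[(1+\la^2)^{-\be}-1\bigr]$ is \emph{not} absolutely integrable there, since $g(\la)\sim -\la^{-2\al}$ with $2\al<1$.  Your integration by parts still works, but for the right reason: $g(\la)\to 0$ as $\la\to\infty$ (so the boundary term vanishes) and $g'(\la)\sim 2\al\,\la^{-2\al-1}$ \emph{is} integrable on $[1,\infty)$.  The hypothesis $\be>1/2$ does not supply this decay; it enters only to guarantee $f\in L^1(\mathbb{R})$ so that $r(t)$ is well defined in the first place.
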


\begin{rem}
{\rm Taking into account the reflection formula
$\G(2\al)\G(1-2\al)=\pi/\sin(2\pi\al),$
the asymptotic relation (\ref{s4}) can be written in the following form}
\beq
\label{s04}
r(t)=C t^{2\al-1}\sin(\pi \al)\G(1-2\al)
\cdot\left(1+o(1)\right)\q {\rm as} \q t\to\f.
\eeq
\end{rem}

\n {\it Proof of Theorem \ref{M5}.\/}
By Lemma \ref{lem72} 1) and Parseval-Plancherel theorem we have
\beq
\label{s7}
S_{T,1}=\int_{-T}^T\left(1-\frac{|t|}T\right) r_1(t)r_2(t)\,dt=
2\pi\itf f_1(\la)f_2(\la)\,d\la - I_1-I_2,
\eeq
where
\beq
\label{s8}
I_1=\int_{|t|>T} r_1(t)r_2(t)\,dt
\eeq
and
\beq
\label{s9}
I_2=\frac1T\int_{-T}^T |t|r_1(t)r_2(t)\,dt.
\eeq
Hence, for $\al:\,=\al_1+\al_2<1/2$ from Lemma \ref{sl1} and (\ref{s8}),
we have as $T\to\f$
\bea
\label{s10}
\nonumber
I_1&=&\int_{|t|>T} r_1(t)r_2(t)\,dt
=2\int_{t>T} r_1(t)r_2(t)\,dt\\
\nonumber
&=&\frac{2C_1C_2\pi^2}{\cos(\pi\al_1)\cos(\pi\al_2)\G(2\al_1)\G(2\al_2)}
\cd \int_{t>T}t^{2(\al-1)}\left(1+o(1)\right)\\
&=&\frac{2C_1C_2\pi^2}{\cos(\pi\al_1)\cos(\pi\al_2)\G(2\al_1)\G(2\al_2)}
\cd\frac1{1-2\al}\cd T^{2\al-1}\left(1+o(1)\right).
\eea
For $I_2$, from Lemma \ref{sl1} and (\ref{s9}),
we have as $T\to\f$ and $\al:\,=\al_1+\al_2<1/2$
\bea
\label{s11}
\nonumber
I_2&=&\frac1T\int_{-T}^T |t|r_1(t)r_2(t)\,dt
=\frac2T\int_{0}^T t\, r_1(t)r_2(t)\,dt\\
\nonumber
&=&\frac2T\cd\frac{C_1C_2\pi^2}{\cos(\pi\al_1)\cos(\pi\al_2)\G(2\al_1)\G(2\al_2)}
\int_{0}^Tt^{2\al-1}\,dt\left(1+o(1)\right)\\
&=&\frac{2 C_1C_2\pi^2}{\cos(\pi\al_1)\cos(\pi\al_2)\G(2\al_1)\G(2\al_2)}
\cd\frac1{2\al}\cd T^{2\al-1}\left(1+o(1)\right).
\eea
From (\ref{s7}), (\ref{s10}) and (\ref{s11}) the result follows.
Theorem \ref{M5} is proved.

\begin{rem}
\label{FMC-rem}
\begin{itemize}
{\rm
\item[(a)]
As in Remark \ref{FM-rem}, the rate in (\ref{s2})
can be obtained from Theorem \ref{T5-3}.

\item[(b)]
We analyze the behavior of the approximations as
$\al_1, \al_2\to 1/4$.
First observe that the first-order asymptotic formula has
a pole when $\al:=\al_1+\al_2=1/2$. In particular,
denoting $\be:=\be_1+\be_2$, and using the change of variable
$\la^2=u$, we have
\bea
\label{s12}
\nonumber
2\pi\itf f_1(\la)f_2(\la)\,d\la
&=&
2\pi C_1C_2\itf  \frac 1 {|\la|^{2\al}(1+\la^2)^{\be}}d\la\\
&=& 2\pi C_1C_2 \int_0^\f \frac{u^{-1/2-\al}}{(1+u)^{\be}}du.
\eea
\n Applying the formula (see, e.g., \cite{Dv})
\bea
\label{s14}
\int_0^\f
\frac{u^{m-1}}{(1+u)^{m+n}}du=\frac{\G(m)\G(n)}{\G(m+n)},
\q m>0, \,\, n>0,
\eea
with $m=1/2-\al$ and $n=\be-m=\al+\be-1/2$, from (\ref{s12}) we find
\bea
\label{s15}
2\pi C_1C_2\itf f_1(\la)f_2(\la)\,d\la=
2\pi C_1C_2\frac{\G(1/2-\al)\G(\al+\be-1/2)}{\G(\be)}.
\eea
Then, using the Laurent expansion of the gamma function
$\G(1/2-\al)$ around the pole $\al=1/2$, from (\ref{s15}) we obtain: 
as $\al=\al_1+\al_2\to1/2$
\bea
\label{s16}
2\pi\itf f_1(\la)f_2(\la)\,d\la
= \frac{4 \pi C_1C_2}{1-2\al}+O(1).
\eea
However, the asymptotic behavior of the second-order term
$C(\al_1,\al_2)$ (see (\ref{s3})), as $\al_1,\al_2\to1/4$
 is readily seen to be
\bea
\label{s17}
\nonumber
C(\al_1,\al_2)&=&
 \frac{2 \pi^2 C_1C_2}{\cos(\pi\al_1)\cos(\pi\al_2)\G(2\al_1)\G(2\al_2)}
\cdot\frac{1}{2\al(1-2\al)}\\
\nonumber
&=& \frac{2 \pi^2 C_1C_2}{\cos^2(\pi/4)[\G(1/2)]^2}
\cdot\frac{1}{1-2\al}+O(1)\\
&=&\frac{4 \pi C_1C_2}{1-2\al}+O(1).
\eea
Thus, the pole in the first-order approximation is removed by the
second-order approximation, so that the approximation
\bea
\label{s18}
2\pi\itf f_1(\la)f_2(\la)\,d\la -
\frac{C(\al_1,\al_2)}{T^{1-2(\al_1+\al_2)}}
\eea
is bounded as $\al_1,\al_2\to1/4$.

\n This good behavior explains why the second-order
approximation produces a good approximation that does uniformly
well over $\al_1,\al_2\in[0,1/4]$, including the limits of the domain.

\item[(c)]
The second-order equivalence holds
along an arbitrary ray for which $\al=\al_1+\al_2\to 1/2$.
\n Indeed, let $\al_1^0\in[0,1/2]$ be any fixed number such that
$\al_1\to\al_1^0$ and $\al=\al_1+\al_2\to 1/2$, then
the representation (\ref{s16}) for the first-order asymptotic term
continues to apply.

\n On the other hand, using the reflection formula
$\G(z)\G(1-z)=\pi/\sin(\pi z)$ with $z=2\al_1^0$, we have
from (\ref{s3}) as $\al_1\to\al_1^0$ and $\al=\al_1+\al_2\to 1/2$
$$
C(\al_1,\al_2)=\frac{2\pi^2 C_1C_2}{\cos(\al_1\pi)\cos(\al_2\pi)\G(2\al_1)\G(2\al_2)}
\cdot\frac{1}{2\al(1-2\al)}
$$
\bea
\label{s20}
\nonumber
&&=\frac{2\pi^2 C_1C_2}{\cos(\al_1^0\pi)
\cos[(1-2\al_1^0)\pi/2]\G(2\al_1^0)\G(1-2\al_1^0)}
\cdot\frac{1}{1-2\al}+O(1)\\
\nonumber
&&=\frac{2\pi^2C_1C_2}{\cos(\al_1^0\pi)\sin(\al_1^0\pi)}
\cd\frac{\sin(2\al_1^0\pi)}{\pi}
\cdot\frac{1}{1-2\al}+O(1)\\
\nonumber
&&=\frac{4\pi C_1C_2}{1-2\al}+O(1),
\eea
and again the second-order equivalence holds.

\n Thus, in this special case, the second-order asymptotic
expansion removes the singularity in the first-order approximation,
and provides a substantially improved approximation to the original
functional.}
\end{itemize}
\end{rem}

\subsection{Limit Theorems for Toeplitz Quadratic Functionals}
\label{CLT}

In this section we examine the limit behavior of quadratic forms
and functionals of discrete- and continuous-time stationary Gaussian
processes with possibly long-range dependence.
The matrix and the operator that characterize the quadratic form and
functional are Toeplitz.

Let $\{X(u), \ u\in \mathbb{U}\}$ be a centered real-valued
Gaussian stationary process with spectral density
$f(\la)$, $\la\in \Lambda$ and covariance function
$r(t):=\widehat f(t)$, $t\in \mathbb{U}$, where $\mathbb{U}$
and $\Lambda$ are as in Section \ref{model}.
\n We are interested in the asymptotic distribution
(as $T\to\f$) of the following Toeplitz type quadratic
functionals of the process $X(u)$:
\beq
\label{c-1}
Q_T:=
 \left \{
 \begin{array}{ll}
\int_0^T\int_0^T\widehat g(t-s)X(t)X(s)\,dt\,ds&  
\mbox{in the continuous-time case}\\
\\
\sum_{k=1}^T\sum_{j=1}^T\widehat g(k-j)X(k)X(j) &
\mbox{in the discrete-time case},
\end{array}
\right.
\eeq
where
\begin{equation}
\label{c-2}
\widehat g(t)=\int_\Lambda e^{i\la t}\,g(\la)\,d\la,\,\,t\in \mathbb{U}
\end{equation}
is the Fourier transform of some real, even, integrable
function $g(\la),$ $\la\in\Lambda$.
We will refer $g(\la)$ as a
generating function for the functional $Q_T$.
In the discrete-time case the functions $f(\la)$ and
$g(\la)$ are assumed to be $2\pi$-periodic and periodically
extended to $\mathbb{R}$.

The limit distributions of the functionals (\ref{c-1}) are
completely determined by the spectral density $f(\la)$ and the
generating function $g(\la)$, and depending on their properties
the limit distributions can be either Gaussian
(i.e., $Q_T$ with an appropriate normalization obeys central
limit theorem), or non-Gaussian.
The following two questions arise naturally:

\vskip1mm

a) Under what conditions on $f(\la)$ and $g(\la)$ will
the limits be Gaussian?

b) Describe the limit distributions, if they are non-Gaussian.

\subsubsection{Central limit theorems for Toeplitz quadratic functionals}
\label{CLT-1}
We first discuss the question a), that is,
finding conditions on the spectral density $f(\la)$ and the
generating function $g(\la)$ under which the functional $Q_T$,
defined by (\ref{c-1}), obeys central limit theorem.

\n This question goes back to the classical monograph by Grenander and
Szeg\"o \cite{GS}, where the problem was considered for discrete
time processes, as an application of the authors' theory of the
asymptotic behavior of the trace of products of truncated Toeplitz
matrices.

Later the problem a) was studied by Ibragimov \cite{I} and
Rosenblatt \cite{R2}, in connection with the statistical estimation of
the spectral ($F(\la)$) and covariance ($r(t)$) functions,
respectively. Since 1986, there has been a renewed interest in
both questions a) and b), related to the statistical inferences
for long-memory processes (see, e.g., Avram \cite{A}, Fox
and Taqqu \cite{FT1}, Giraitis and Surgailis \cite{GSu},
Giraitis et al. \cite{GKSu}, Terrin
and Taqqu \cite{TT3}, Taniguchi \cite{Ta1}, Taniguchi  and
Kakizawa \cite{TK}, Ginovyan and Sahakyan \cite{GS1}, and
references therein). In particular, Avram \cite{A}, Fox and Taqqu
\cite{FT1}, Giraitis and Surgailis \cite{GSu}, Ginovyan and
Sahakyan \cite{GS1} have obtained sufficient conditions for
the quadratic form $Q_T$ to obey the central limit theorem (CLT),
when the model $X(t)$ is a discrete-time process.

For continuous time processes the question a) was studied in
Ibragimov \cite{I}, Ginovyan \cite{G1,G3},
and Ginovyan and Sahakyan \cite{GS2}.

Let $Q_T$ be as in (\ref{c-1}). We will use the following notation:
By $\widetilde Q_T$ we denote the standard normalized
quadratic functional:
\begin{equation}
\label{c-3}
 \widetilde Q_T=\frac1{\sqrt T}\,\left(Q_T-EQ_T\right).
\end{equation}
The notation
\begin{equation}
\label{c-4}
\widetilde Q_T\an N(0,\sigma^2) \q {\rm as}\q T\to\f
\end{equation}
will mean that the distribution of the random variable
$\widetilde Q_T$ tends (as $T\to\f$) to the centered normal
distribution with variance $\sigma^2$.

Our study of the asymptotic distribution of the quadratic
functionals (\ref{c-1}) is based on the following representation
of the $k$--th order cumulant $\chi_k(\cd)$ of $\widetilde Q_T$,
which follows from (\ref{MTc-5}) (see, also, \cite{GS,I}):
\beq
\label{c-5}
\chi_k(\widetilde Q_T)= \left \{
\begin{array}{ll}
0, & \mbox{for $k = 1$}\\
\\[-1mm]
 T^{-k/2}2^{k-1}(k-1)! \,
\text{tr} \, [A_T(f)A_T(g)]^k, & \mbox{for $k \ge 2$,}
\end{array}
\right.
\eeq
where
$A_T(f)$ and $A_T(g)$ denote either the $T$-truncated Toeplitz
operators (for continuous-time case), or the $T\times T$ Toeplitz matrices
(for discrete-time case) generated by the functions $f$ and $g$ respectively,
and  $\tr[A]$ stands for the trace of an operator $A$.

The next result contains sufficient conditions in terms
of $f(\la)$ and $g(\la)$ ensuring central limit theorems
for standard normalized quadratic functionals $\widetilde Q_T$ both
for discrete- and continuous-time processes.

Below we assume that $f, g \in L^1(\Lambda)$, and with no
loss of generality, that $g\ge 0$.   Also, we set
\beq
\label{c-6}
\si^2_0 = 16\pi^3\int_\Lambda f^2(\la)g^2(\la)\,d\la.
\eeq
As usual $\Lambda=\mathbb{T}=(-\pi, \pi]$ or $\Lambda=\mathbb{R}=(-\f,\f)$. 
The following theorem includes
both discrete-time ($\Lambda=(-\pi, \pi]$) and
continuous-time ($\Lambda=(-\f,\f)$).
\begin{thm}
\label{cth1}
Each of the following conditions is sufficient for
\beq
\label{c-7}
\widetilde Q_T\an N(0,\sigma_0^2) \q {\rm as}\q T\to\f,
\eeq
with $\si^2_0$ given by (\ref{c-6}).
\begin{itemize}
\item[{\bf (A)}]
$f\cdot g\in L^1(\Lambda)\cap L^2(\Lambda)$ and
\beq
\label{c-8}
\chi_2(\widetilde Q_T):=
\frac2T\tr\bigl[B_T(f)B_T(g)\bigr]^2 \longrightarrow
\sigma_0^2<\f.
\eeq
\item[{\bf (B)}]
The function
\beq
\label{c-9}
\varphi({\bf u}):=\varphi(u_1, u_2,u_3)=\int_\Lambda
f(\la)g(\la-u_1)f(\la-u_2)g(\la-u_3)\,d\la
\eeq
belongs to $L^2(\Lambda^3)$ and is continuous at ${\bf 0}=(0,0,0).$

\item[{\bf (C)}]
$f \in L^1(\Lambda)\cap L^p(\Lambda)$ $(p\ge2)$
and  $g\in L^1(\Lambda)\cap L^q(\Lambda)$ $(q\ge2)$
with $$1/p+1/q\le1/2.$$

\item[{\bf (D)}]
$f\in L^1(\Lambda)\cap L^2(\Lambda)$,
\,$g\in L^1(\Lambda)\cap L^2(\Lambda)$,
$fg\in L^2(\Lambda)$ and
\beq
\label{c-10}
\nonumber
\int_\Lambda  f^2(\la)g^2(\la-\mu)\,d\la \longrightarrow
 \int_\Lambda f^2(\la)g^2(\la)\,d\la \quad {\rm as} \quad \mu\to0.
\eeq

\item[{\bf (E)}]
The spectral density $f(\la)$ and the generating function $g(\la)$ satisfy
\beq
\label{c-11}
\nonumber
f(\lambda)\le |\lambda|^{-\alpha}L_1(\lambda), \q
|g(\lambda)|\le |\lambda|^{-\beta}L_2(\lambda),\q \lambda\in\Lambda,
\eeq
for some $\alpha<1,\ \beta<1$ with
$$\alpha+\beta\le1/2 \q {\rm and} \q L_i\in SV(\mathbb{R}),
\,\, \lambda^{-(\alpha+\beta)}L_i(\lambda)\in L^2(\Lambda),\ \ i=1,2,$$
where $SV(\mathbb{R})$ is the class of slowly varying
at zero functions $u(\lambda)$, $\lambda\in\mathbb{R}$, satisfying
$u(\lambda)\in L^\infty(\mathbb{R}),$\
$\lim_{\lambda\to0}u(\lambda)=0,$ \
$u(\lambda)=u(-\lambda)$ and $0<u(\lambda)<u(\mu)$\ for\ $0<\lambda<\mu.$

In the continuous-time case, we also assume that the functions $f(\la)$ and $g(\la)$ are bounded on
$\mathbb{R}\setminus (-\pi, \pi)$.
\end{itemize}
\end{thm}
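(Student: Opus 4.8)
The plan is to use the classical method of cumulants. Since the centered normal law $N(0,\sigma_0^2)$ is uniquely determined by its cumulants --- the first two being $0$ and $\sigma_0^2$ and all higher ones vanishing --- it suffices to prove that $\chi_1(\widetilde Q_T)=0$, that $\chi_2(\widetilde Q_T)\to\sigma_0^2$, and that $\chi_k(\widetilde Q_T)\to 0$ for every $k\ge 3$. The first is immediate from the centering in (\ref{c-3}). For the remaining cumulants I would start from the trace representation (\ref{c-5}) and observe that, for the alternating collection $\mathcal{H}=\{f,g,f,g,\ldots\}$ with $m=2k$, one has $\tr[A_T(f)A_T(g)]^k=T\,S_{A,\mathcal{H}}(T)$ in the notation of (\ref{im-1}), so that $\chi_k(\widetilde Q_T)=2^{k-1}(k-1)!\,T^{1-k/2}\,S_{A,\mathcal{H}}(T)$. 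This reduces the whole problem to the trace-approximation results of Sections \ref{ATM} and \ref{ATO}.

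For the variance ($k=2$) this gives $\chi_2(\widetilde Q_T)=2\,S_{A,\mathcal{H}}(T)$ with $m=4$ and $h_1=h_3=f$, $h_2=h_4=g$. Each of the five hypotheses is designed precisely so that this $m=4$ instance of Problem (A) holds: condition {\bf (C)} is the integrability case (A1) of Theorems \ref{T1} and \ref{T5} (since $1/p+1/q\le 1/2$ forces $1/p_1+\cdots+1/p_4\le 1$); condition {\bf (B)} is the continuity case (A2); conditions {\bf (D)} and {\bf (E)} are the cases (A3) and (A4) recorded in Remarks \ref{rem2-2} and \ref{rem4-1}; and condition {\bf (A)} simply assumes the convergence (\ref{c-8}) outright. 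In every case $S_{A,\mathcal{H}}(T)\to(2\pi)^3\int_\Lambda f^2(\la)g^2(\la)\,d\la$, so by (\ref{c-6}) indeed $\chi_2(\widetilde Q_T)\to 16\pi^3\int_\Lambda f^2 g^2=\sigma_0^2$.

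The substance of the proof lies in the higher cumulants. Because $f\ge 0$ and $g\ge 0$, the nonzero eigenvalues $\la_j$ of $A_T(f)A_T(g)$ coincide with those of the positive operator $A_T^{1/2}(f)A_T(g)A_T^{1/2}(f)$ and are therefore nonnegative (cf. item 1 of Section \ref{link}). Writing $\rho_T:=\|A_T(f)A_T(g)\|_{\mathrm{op}}=\max_j\la_j$ and noting that $\sum_j\la_j^2=\tr[A_T(f)A_T(g)]^2=O(T)$ from the variance step, the elementary inequality $\sum_j\la_j^k\le\rho_T^{\,k-2}\sum_j\la_j^2$ (valid for $k\ge 2$ since $\la_j\ge 0$) yields $|\chi_k(\widetilde Q_T)|\le c_k\,T^{1-k/2}\rho_T^{\,k-2}$. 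Everything then comes down to controlling $\rho_T$. In the strict interior regimes --- condition {\bf (E)} with $\alpha+\beta<1/2$, and likewise the strict version of {\bf (C)} --- one has $\rho_T=O(T^{1/2-\varepsilon})$ for some $\varepsilon>0$ (via $\|A_T(f)\|_{\mathrm{op}}=O(T^{\alpha})$, $\|A_T(g)\|_{\mathrm{op}}=O(T^{\beta})$, or the corresponding Schatten--H\"older estimates on the trace in the spirit of Avram), so that $|\chi_k(\widetilde Q_T)|=O(T^{-(k-2)\varepsilon})\to 0$ for all $k\ge 3$, completing the argument there.

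The main obstacle is the boundary case, $\alpha+\beta=1/2$ (equivalently the borderline situations covered by {\bf (A)}, {\bf (B)}, {\bf (D)}). There $\rho_T$ is only of exact order $T^{1/2}$, and the crude bound above gives merely $|\chi_k(\widetilde Q_T)|=O(1)$ rather than decay. To recover $\chi_k\to 0$ I would abandon the operator norm and instead return to the integral (diagram) representation of $\tr[A_T(f)A_T(g)]^k$, exploiting the extra regularity built into the hypotheses --- the continuity of $\varphi$ at the origin in {\bf (B)}, or the convergence $\int_\Lambda f^2(\la)g^2(\la-\mu)\,d\la\to\int_\Lambda f^2 g^2$ as $\mu\to 0$ in {\bf (D)} and {\bf (E)} --- to show directly that $\tr[A_T(f)A_T(g)]^k=o(T^{k/2})$ for $k\ge 3$. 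This is the Giraitis--Surgailis/Ginovyan--Sahakyan refinement of the Fox--Taqqu argument, and it is where the real work lies. The continuous-time statements then follow by the same scheme, using the operator trace-approximation theorems of Section \ref{ATO} together with the boundedness of $f$ and $g$ off $(-\pi,\pi)$ to secure integrability.
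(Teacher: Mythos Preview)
The paper does not give a self-contained proof of this theorem: it sets up the cumulant representation (\ref{c-5}), states the result, and then in the two remarks that follow attributes each part to the literature and records the logical hierarchy (``Assertion {\bf (A)} implies assertions {\bf (B)}--{\bf (E)}; assertion {\bf (B)} implies {\bf (C)} and {\bf (D)}''). So the paper's ``proof'' is: establish the CLT under {\bf (A)} (citing Giraitis--Surgailis in the discrete case, Ginovyan and Ginovyan--Sahakyan in the continuous case), and then verify that each of {\bf (B)}--{\bf (E)} implies the two hypotheses of {\bf (A)} --- which is precisely what the trace-approximation theorems of Sections \ref{ATM}--\ref{ATO} (in their $m=4$ incarnations (A1)--(A4)) deliver. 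Your outline is entirely consistent with this: the cumulant method, the reduction of the variance step to Theorems \ref{T1}/\ref{T5} and Remarks \ref{rem2-2}/\ref{rem4-1}, and the identification of the higher cumulants as the crux are all correct.

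Where your route diverges slightly from the paper's implied structure is organizational: you attack each of {\bf (A)}--{\bf (E)} separately for the higher cumulants, splitting into ``strict interior'' versus ``boundary'' regimes, whereas the paper funnels everything through {\bf (A)}. Your interior argument via the spectral-radius bound $\sum_j\la_j^k\le\rho_T^{k-2}\sum_j\la_j^2$ is correct and is indeed the standard opening move. The honest gap you flag in the boundary case is real, and your proposed fix --- returning to the multi-integral (diagram) representation and exploiting the continuity/convergence hypotheses --- is one of the two ways the cited references proceed (this is the Ginovyan--Sahakyan route for {\bf (B)} and {\bf (E)}). The other route, used by Giraitis--Surgailis for {\bf (A)} itself, is a truncation argument: approximate $f,g$ by bounded functions, invoke the classical bounded case, and control the remainder in variance using $fg\in L^2$ together with the assumed convergence (\ref{c-8}). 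Either way, since the paper also defers this step to the literature, your proposal is at least as complete as the paper's own account; the only thing missing from a fully self-contained proof is the execution of that boundary step, which you correctly isolate and correctly attribute.
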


\begin{rem}
{\rm For discrete-time case:
assertions (A) and (D) were proved in Giraitis and Surgailis \cite{GSu}
(see also Giraitis et al. \cite{GKSu});
assertions (B) and (E) were proved in Ginovyan and Sahakyan \cite{GS1};
assertion (E) with $\al+\be<1/2$ was first obtained by Fox and Taqqu \cite{FT1};
assertion (C) for $p=q=\f$ was first established by Grenander and Szeg\"o
(\cite{GS}, Sec. 11.7),
while the case $p=2$, $q=\f$ was proved by Ibragimov
\cite{I} and Rosenblatt \cite{R2}, in the general discrete-time case
assertion (D) was proved by Avram \cite{A}.

For continuous-time case assertions (A) -- (E) were proved in Ginovyan \cite{G3}
and Ginovyan and Sahakyan \cite{GS2}.}
\end{rem}

\begin{rem}
{\rm Assertion (A) implies assertions (B) -- (E).
Assertion (B) implies assertions (C) and (D).
On the other hand, for functions $f(\la)=\la^{-3/4}$ and $g(\la)=\la^{3/4}$
satisfying the conditions of (E), the function
$\I(t_1, t_2,t_3)$ is not defined for \,$t_2=0$, $t_1\neq 0$, $t_3\neq 0$,
showing that assertion (B) generally does not imply assertion (E)
(see Ginovyan and Sahakyan \cite{GS1}).}
\end{rem}

\begin{rem}
{\rm Examples of spectral density $f(\la)$ and
generating function $g(\la)$ satisfying the conditions
of Theorem \ref{cth1} (E) provide the functions
\beq
\label{c-13}
f(\la)= |\la|^{-\al}|\ln|\la||^{-\g} \q \mbox{and} \q
g(\la)= |\la|^{-\be}|\ln|\la||^{-\g},
\eeq
where $\al<1,$\ $\be<1,$\ $\al+\be\le1/2$ and $\g>1/2$
(see Ginovyan and Sahakyan \cite{GS1}).}
\end{rem}
\begin{rem}
{\rm The functions $f(\la)$ and $g(\la)$ in Theorem \ref{cth1} (E)
have singularities at the point $\la=0$, and are bounded in
any neighborhood of this point. It can be shown that the choice
of the point $\la=0$ is not essential, and instead
it can be taken to be any point $\la_0\in[-\pi, \pi]$.
Using the properties of the products of Toeplitz matrices
and operators it can be shown that Theorem \ref{cth1} (E)
remains valid when $f(\la)$ and $g(\la)$ have
singularities of the form (\ref{c-11}) at the
same finite number of points of the segment $[-\pi, \pi]$
(see Ginovyan and Sahakyan \cite{GS1}).}
\end{rem}

\begin{rem}
{\rm
The next proposition shows that the condition of positiveness
and finiteness of asymptotic variance of quadratic form $Q_T$
is not sufficient for $Q_T$ to obey CLT as it was conjectured
in Giraitis and Surgailis \cite{GSu}, and Ginovyan \cite{G2}.}
\end{rem}
\begin{pp}
There exist a spectral density $f(\la)$ and a generating function
$g(\la)$ such that
\beq
\label{c-14}
0<\inl f^2(\la)\,g^2(\la)\,d\la<\f
\eeq
and
\beq
\label{c-15}
\lim_{T\to \infty}\sup\chi_2(\widetilde Q_T)
=\lim_{n\to \infty}\sup\frac2T\tr\left(B_T(f)B_T(g)\right)^2=\infty,
\eeq
that is, the condition (\ref{c-14}) does not guarantee convergence
in (\ref{c-8}).
\end{pp}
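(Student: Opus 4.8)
The plan is to exhibit an explicit pair $(f,g)$ for which the ``naive'' asymptotic variance $\sigma_0^2=16\pi^3\inl f^2(\la)g^2(\la)\,d\la$ is finite and positive, yet $\chi_2(\widetilde Q_T)=\frac2T\tr[B_T(f)B_T(g)]^2$ fails to converge. Guided by the pair already singled out in the preceding remark, I would take $f(\la)=|\la|^{-3/4}$ and $g(\la)=|\la|^{3/4}$ on $\mathbb T=(-\pi,\pi]$. Both are nonnegative, even and integrable, so $f$ is a legitimate spectral density and $g$ a legitimate generating function; moreover $f^2(\la)g^2(\la)\equiv1$, whence $\inl f^2g^2\,d\la=2\pi$ and \eqref{c-14} holds. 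Thus the whole issue is to prove $\limsup_T\frac1T\tr[B_T(f)B_T(g)]^2=\infty$, i.e. that finiteness of $\inl f^2g^2$ does not force \eqref{c-8}.

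The natural starting point is the exact expansion
\beqq
\frac1T\tr[B_T(f)B_T(g)]^2=\frac1T\sum_{a,b,c,d=1}^T \widehat f(a-b)\,\widehat g(b-c)\,\widehat f(c-d)\,\widehat g(d-a),
\eeqq
which, after grouping the four lattice variables, can be rewritten as $\int_{\mathbb T^3}\varphi(\mathbf u)\,\mathcal D_T(\mathbf u)\,d\mathbf u$, where $\varphi$ is exactly the function \eqref{c-9} and $\mathcal D_T$ is a product of four Dirichlet kernels whose arguments sum to zero. The point is that $\varphi(\mathbf 0)=\inl f^2g^2\,d\la$ is finite, but $\varphi$ is \emph{not} continuous at $\mathbf 0$: on the hyperplane $\{u_2=0\}$ the integrand picks up the factor $|\la|^{-3/4}|\la|^{-3/4}=|\la|^{-3/2}$, which is non-integrable, so $\varphi\equiv+\infty$ there, and a short computation gives the blow-up rate $\varphi(u_1,u_2,u_3)\sim C|u_1u_3|^{3/4}|u_2|^{-1/2}$ as $u_2\to0$. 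This singular ridge through every neighbourhood of $\mathbf 0$ --- precisely the failure of the continuity hypothesis underlying assertions (B) and (D) of Theorem \ref{cth1} --- is the engine that defeats the heuristic Parseval identity $\sum_{x,y,z}\widehat f(x)\widehat g(y)\widehat f(z)\widehat g(x+y+z)=(2\pi)^3\inl f^2g^2$.

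To turn this into a divergent lower bound I would use the Fourier-coefficient asymptotics $\widehat f(k)\sim c_f|k|^{-1/4}$ with $c_f>0$ and $\widehat g(k)\sim c_g|k|^{-7/4}$ with $c_g<0$. On the range where all four index differences are large the summand is a product of two (positive) $\widehat f$-factors and two (negative) $\widehat g$-factors, hence positive; restricting to this range and replacing the lattice count by its value $(T-\text{spread})_+$ leads, after summing successively in the three difference variables, to $\sum_{|x|\le T}|x|^{-1/2}\asymp T^{1/2}$, so this ``fully off-diagonal'' part grows like $T^{1/2}$. The complementary contribution comes from the partial diagonals $b=c$ or $d=a$, where $\widehat g(0)>0$ reverses a sign and produces negative terms; a parallel estimate shows these are also of order $T^{1/2}$. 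Establishing the Proposition therefore reduces to showing that these competing order-$T^{1/2}$ blocks do not cancel to leave a finite limit --- equivalently, that the Fejér-weighted, only conditionally convergent triple series has $\limsup=\infty$ along a suitable subsequence of $T$.

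This last step is the main obstacle, and it is genuinely forced: I would first note that cancellation is unavoidable, since if both $f$ and $g$ were positive definite (all Fourier coefficients of one sign) then every summand would be nonnegative and dominated convergence would give $\frac1T\tr\to(2\pi)^3\inl f^2g^2<\infty$; divergence can only come from the sign change of $\widehat g$ at the origin. To control the signed competition I would localize the kernel integral to the slab $|u_2|\lesssim1/T$ with $u_1,u_3$ on a mesoscopic scale bounded away from $0$, on which the $|u_2|^{-1/2}$ blow-up of $\varphi$ is exhibited directly against the factors $D_T(u_1-u_2)D_T(u_2-u_3)$, and extract the leading asymptotics (tracking the constants $c_f,c_g,\widehat g(0)$ and the phase of $\mathcal D_T$) rather than crude absolute-value bounds. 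Along the subsequence of $T$ on which the relevant phases align, the positive ridge contribution dominates and forces $\frac1T\tr[B_T(f)B_T(g)]^2\to\infty$, which is \eqref{c-15}; the precise computation of these constants and the choice of subsequence is where the real work lies.
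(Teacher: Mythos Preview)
Your proof has a genuine gap, and it is not merely that ``the real work lies'' in computing constants: the very choice of example is in doubt. For $f(\la)=|\la|^{-3/4}$ and $g(\la)=|\la|^{3/4}$ you correctly identify the ridge singularity $\varphi(u_1,u_2,u_3)\sim C|u_1u_3|^{3/4}|u_2|^{-1/2}$, but notice that this expression \emph{vanishes} as $\mathbf u\to\mathbf 0$ along the diagonal (and more generally whenever $|u_2|\gg|u_1u_3|^{3/2}$), which is precisely where the Dirichlet--Fej\'er kernel $\Phi_T$ concentrates its mass. A scaling check on the region $|u_1|,|u_3|\sim T^{-a}$, $|u_2|\sim T^{-1}$ shows that the contribution of the singular ridge to $\int\varphi\,\Phi_T$ is $O(T^{-\theta})$ for some $\theta>0$ for every $0<a\le 1$; the divergence you need would have to come from a delicate failure of cancellation between the signed, non-absolutely-convergent pieces you describe, and you give no mechanism for this. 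The mere fact that $\varphi$ is infinite on a lower-dimensional set does not force $\limsup_T\int\varphi\,\Phi_T=\infty$.

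The paper sidesteps all of this by choosing a different pair. Fix $p\ge 2$ and $q>1$ with $1/p+1/q>1$, and define $f_0$ and $g_0$ on $(0,1]$ (then extend evenly) by
\[
f_0(\la)=\Bigl(\tfrac{2^{s}}{s^{2}}\Bigr)^{1/p}\ \text{on }[2^{-s-1},2^{-s}]\ (s\ \text{even}),\qquad
g_0(\la)=\Bigl(\tfrac{2^{s}}{s^{2}}\Bigr)^{1/q}\ \text{on }[2^{-s-1},2^{-s}]\ (s\ \text{odd}),
\]
and $0$ on the complementary intervals; then set $f=f_0$ and $g=g_0+C$ for a constant $C>0$. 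The point of the alternating supports is twofold: (i) $f_0g_0\equiv 0$, so $fg=Cf_0$ and $\int f^2g^2=C^2\int f_0^2<\infty$ (this uses $p\ge 2$); (ii) a shift of order $2^{-s}$ slides the support of $g_0$ onto that of $f_0$, producing $\int f_0^2(\la)g_0^2(\la-u)\,d\la \gtrsim (2^{s}/s^{2})^{2(1/p+1/q)}\cdot 2^{-s}\to\infty$ as $u\to 0$. Here the blow-up of the $\varphi$-type integrand near $\mathbf 0$ carries \emph{no} compensating factor like your $|u_1u_3|^{3/4}$, and this is what makes the divergence of $\chi_2(\widetilde Q_T)$ verifiable (the details are in Ginovyan--Sahakyan \cite{GS1}). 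In short, the paper engineers a genuine, unmitigated singularity at $\mathbf 0$, whereas your power-law pair has a self-cancelling one; if you want to salvage your approach you would need an argument of a quite different kind, and it may simply be false for that pair.
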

To construct functions $f(\la)$ and $g(\la)$ satisfying
(\ref{c-14}) and (\ref{c-15}), for a fixed $p\ge2$
we choose a number $q>1$ to satisfy $1/p+1/q>1$,
and for such $p$ and $q$ we consider the functions
$f_0(\la)$ and $g_0(\la)$ defined by
\beq
\label{c-16}
f_0(\la)=
 \left \{
 \begin{array}{ll}
\left(\frac{2^s}{s^2}\right)^{1/p}, & \mbox{if $2^{-s-1}\le
\la \le 2^{-s},\,s=2m$}\\
0,&  \mbox{if $2^{-s-1}\le\la \le 2^{-s},\,s=2m+1$}
\end{array}
\right.
\eeq
\beq
\label{c-17}
g_0(\la)=
 \left \{
\begin{array}{ll}
 \left(\frac{2^s}{s^2}\right)^{1/q}, & \mbox{if $2^{-s-1}\le
\la \le 2^{-s},\,s=2m+1$}\\
0,& \mbox{if $2^{-s-1}\le\la \le
2^{-s},\,s=2m$},\end{array} \right.
\eeq
where $m$ is a positive integer. For an
arbitrary finite positive constant $C$ we set
$g_\pm(\la)=g_0(\la)\pm C$.
Then the functions $f=f_0$ and $g=g_+$ or $g=g_-$
satisfy (\ref{c-14}) and (\ref{c-15})
(for details we refer to Ginovyan and Sahakyan \cite{GS1}.
Consequently, for these functions
the standard normalized quadratic form $Q_T$ does not obey CLT,
and it is of interest to describe the limiting non-Gaussian
distribution of $Q_T$ in this special case.

\subsubsection{Non-central Limit Theorems}
\label{CLT-2}

The problem b) for discrete-time processes, that is,
the description of the limit distributions of the
quadratic form
\beq
\label{nc-1}
Q_T:=\sum_{k=1}^T\sum_{j=1}^T\widehat g(k-j)X(k)X(j),
\q T\in\mathbb{N}
\eeq
if it is non-Gaussian, goes back to the papers
by Rosenblatt \cite{R1}-\cite{R3}.

Later this problem was studied in a series of papers
by Taqqu, and Terrin and Taqqu (see, e.g., in \cite{Tq1},
\cite{Tq4}, \cite{TT1}, \cite{TT3}, and references therein).
Specifically, suppose that the spectral density $f(\la)$ and
the generating function $g(\la)$ are regularly varying functions
at the origin:
\beq
\label{nc-2}
f(\la)= |\la|^{-\al}L_1(\la) \q {\rm and} \q
g(\la)= |\la|^{-\be}L_2(\la), \q \al<1, \be<1,
\eeq
where $L_1(\la)$ and $L_2(\la)$ are slowly varying functions
at zero, which are bounded on bounded intervals.
The conditions $\al<1$ and $\be<1$ ensure that the Fourier
coefficients of $f$ and $g$ are well defined. When $\al>0$
the model $\{X(t), t\in\mathbb{Z}\}$ exhibits long memory.

It is the sum $\al+\be$ that determines the asymptotic behavior
of the quadratic form $Q_T$. If $\al+\be\le 1/2$, then by
Theorem \ref{cth1}(E) the standard normalized quadratic form
$$T^{-1/2}\left(Q_T-EQ_T\right)$$
converges in distribution to a Gaussian random variable.
If $\al+\be>1/2$, convergence to Gaussian fails.

Consider the embedding of the discrete sequence
$\{Q_T, \,T\in\mathbb{N}\}$ into a conti\-nuous-time process
$\{Q_T(t), \,T\in\mathbb{N}, t\in\mathbb{R}\}$ defined by
\beq
\label{nc-3}
Q_T(t):=\sum_{k=1}^{[Tt]}\sum_{j=1}^{[Tt]}\widehat g(k-j)X(k)X(j),
\eeq
where $[\,\,]$ stands for the greatest integer.
Denote by $Z(\cd)$ the complex-valued Gaussian random measure defined
on the Borel $\si$-algebra $\mathcal{B}(\mathbb{R})$, and satisfying
$EZ(B)=0$, $E|Z(B)|^2=|B|$, and $\ol{Z(-B)}=Z(B)$ for any
$B\in \mathcal{B}(\mathbb{R})$.

The next result, proved in Terrin and Taqqu \cite{TT1},
describes the non-Gaussian limit distribution
of the suitable normalized process $Q_T(t)$.
\begin{thm}
\label{NCT}
Let $f(\la)$ and $g(\la)$ be as in (\ref{nc-2}) with $\al<1$, $\be<1$
and slowly varying at zero and bounded on bounded intervals factors
$L_1(\la)$ and $L_2(\la)$. Let the process $Q_T(t)$ be as in (\ref{nc-3}).
Then for $\al+\be>1/2$
\begin{equation}
\label{1-16}
 \widehat Q_T(t):=\frac1{T^{\al+\be}L_1(1/T)L_2(1/T)}\,
 \left(Q_T(t)-E[Q_T(t)]\right)
\end{equation}
converges (as $T\to\f$) weakly in $D[0,1]$ to
\beq
\label{nc-4}
Q(t):=\int_{\mathbb{R}^2}^{''}K_t(x,y)dZ(x)dZ(y),
\eeq
where
\beq
\label{nc-5}
K_t(x,y)=|xy|^{-\al/2}\int_\mathbb{R}\frac{e^{it(x+u)}-1}{i(x+u)}
\cd \frac{e^{it(y-u)}-1}{i(y-u)}|u|^{-\be}du,
\eeq
The double prime in the integral (\ref{nc-4}) indicates that the
integration excludes the diagonals $x=\pm y$.
\end{thm}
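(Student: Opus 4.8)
The plan is to represent the centered functional $Q_T(t)-E[Q_T(t)]$ as a double Wiener--It\^o integral and then to pass to the limit at the level of kernels. First I would use the spectral representation of the Gaussian stationary sequence,
\beqq
X(k)=\int_{-\pi}^{\pi}e^{ik\la}\sqrt{f(\la)}\,dW(\la),
\eeqq
where $W$ is the complex Gaussian white-noise measure on $(-\pi,\pi]$ with Lebesgue control measure and $\ol{W(-B)}=W(B)$. Substituting $\widehat g(k-j)=\int_{-\pi}^{\pi}e^{i(k-j)\mu}g(\mu)\,d\mu$ into (\ref{nc-3}) and applying the product (diagram) formula $X(k)X(j)=I_2(\cd)+E[X(k)X(j)]$ for Wiener--It\^o integrals, I would obtain
\beqq
Q_T(t)-E[Q_T(t)]=\int_{(-\pi,\pi]^2}^{''}Q_{T,t}(\la_1,\la_2)\,dW(\la_1)\,dW(\la_2),
\eeqq
where the centering removes exactly the contraction (expectation) term and the symmetric off-diagonal kernel is
\beqq
Q_{T,t}(\la_1,\la_2)=\sqrt{f(\la_1)f(\la_2)}\int_{-\pi}^{\pi}g(\mu)\Big(\sum_{k=1}^{[Tt]}e^{ik(\la_1+\mu)}\Big)\Big(\sum_{j=1}^{[Tt]}e^{ij(\la_2-\mu)}\Big)\,d\mu.
\eeqq

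The second step is a rescaling. Setting $\la_i=x_i/T$, $\mu=u/T$ and using the $L^2$-isometry of the double integral (which contributes a Jacobian $T^{-1}$ under this change of variables), one sees that $\widehat Q_T(t)$ has the same law as $\int_{\rr^2}^{''}\tilde K_{T,t}(x,y)\,dZ(x)\,dZ(y)$, where $Z$ is the white-noise measure of the statement and
\beqq
\tilde K_{T,t}(x,y)=\frac{1}{T^{1+\al+\be}L_1(1/T)L_2(1/T)}\,Q_{T,t}(x/T,\,y/T).
\eeqq
Now the Dirichlet sums satisfy $\sum_{k=1}^{[Tt]}e^{ik(x+u)/T}\sim T\,(e^{it(x+u)}-1)/(i(x+u))$, while the regular variation (\ref{nc-2}) gives $f(x/T)\sim T^{\al}|x|^{-\al}L_1(1/T)$ and $g(u/T)\sim T^{\be}|u|^{-\be}L_2(1/T)$. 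Collecting the powers of $T$ --- namely $T^{\al}$ from the two factors $\sqrt{f}$, $T^{\be}$ from $g$, $T^{2}$ from the two Dirichlet sums and $T^{-1}$ from $d\mu=T^{-1}du$ --- gives $Q_{T,t}(x/T,y/T)\sim T^{1+\al+\be}L_1(1/T)L_2(1/T)\,K_t(x,y)$, so that $\tilde K_{T,t}(x,y)\to K_t(x,y)$ pointwise, with $K_t$ as in (\ref{nc-5}).

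The final step is to upgrade this pointwise convergence of kernels to convergence in $D[0,1]$. Since the double Wiener--It\^o integral is, up to a constant, an isometry from the symmetric $L^2(\rr^2)$ into $L^2(\Om)$, it suffices to show that $\tilde K_{T,t}\to K_t$ in $L^2(\rr^2)$ off the diagonals; this yields $L^2(\Om)$-convergence of $\widehat Q_T(t)$, hence convergence in distribution, and by the Cram\'er--Wold device convergence of all finite-dimensional distributions. Tightness in $D[0,1]$ would then follow from a uniform increment bound $E|\widehat Q_T(t)-\widehat Q_T(s)|^2\le C|t-s|^{\ka}$ with $\ka>1$, obtained by applying the isometry to the kernel increments. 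The hard part will be the $L^2$ convergence of the kernels: one must dominate the rescaled integrands uniformly in $T$ in the presence of the singular factors $|x|^{-\al}$, $|y|^{-\al}$ and $|u|^{-\be}$, using the uniform convergence theorem and Potter bounds for the slowly varying $L_1,L_2$, and one must check that neighborhoods of the excluded diagonals $x=\pm y$ contribute negligibly, so that the double-prime integral in (\ref{nc-4}) is well defined. The standing hypothesis $\al+\be>1/2$ (with $\al,\be<1$) is precisely what makes $K_t$ belong to $L^2(\rr^2)$ off the diagonals and renders these singularities square-integrable.
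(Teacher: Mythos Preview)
The paper does not give its own proof of this theorem; it simply attributes the result to Terrin and Taqqu \cite{TT1} and states it without argument. Your sketch is essentially the strategy carried out in that reference: write the centered quadratic form as a double Wiener--It\^o integral via the spectral representation and the diagram formula, rescale the integration variables by $1/T$, use regular variation of $f$ and $g$ together with the Dirichlet-sum asymptotics to identify the limiting kernel $K_t$, deduce finite-dimensional convergence from $L^2$-convergence of the kernels, and finish with a tightness bound. So there is nothing in the present paper to compare against, and your outline is aligned with the original source.

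Two small cautions on the tightness step. First, since $Q_T(t)$ depends on $[Tt]$, the process $\widehat Q_T$ is piecewise constant and your second-moment increment bound cannot hold literally for $|t-s|<1/T$; in the Terrin--Taqqu argument this is handled by a Billingsley-type condition or by restricting to increments $|t-s|\ge 1/T$ and treating the discretization separately. Second, a bound $E|\widehat Q_T(t)-\widehat Q_T(s)|^2\le C|t-s|^{\ka}$ with $\ka>1$ is not by itself the Kolmogorov tightness criterion (which requires the moment exponent plus one to exceed the time exponent); the standard fix is to pass to fourth moments, which for second-chaos variables are controlled by squares of second moments via hypercontractivity, yielding an exponent $2\ka>1$. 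These are routine adjustments, but they are where the actual work of \cite{TT1} lies beyond the formal kernel identification you have written down.
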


\begin{rem}
{\rm The limiting process in (\ref{nc-4}) is real-valued, non-Gaussian,
and satisfies $EQ(t)=0$ and $EQ^2(t)=\int_{\mathbb{R}^2}|K_t(x,y)|^2dxdy$.
It is self-similar with parameter $H=\al+\be\in(1/2,2)$, that is, the
processes $\{Q(at), t\ge0\}$ and $\{a^HQ(t), t\ge0\}$ have the same
finite dimensional distributions for all $a>0$.}
\end{rem}

\begin{rem}
{\rm In \cite{R1} (see also \cite{R3}) Rosenblatt showed that if a
discrete-time centered Gaussian process $X(t)$ has covariance function
$r(t)=(1+t^2)^{\al/2-1/2}$ with $1/2<\al<1$, then the random variable $$Q_T:=T^{-\al}\sum_{k=1}^T\left[X^2(k)-1\right]$$
has
a non-Gaussian limiting distribution, and described this distribution
in terms of characteristic functions. This is a special case of
Theorem \ref{NCT} with $t=1$, $1/2<\al<1$ and $\be=0$.
In \cite{Tq1} (see also \cite{Tq4}) Taqqu extended Rosenblatt's result
by showing that the stochastic process
$$Q_T(t):=T^{-\al}\sum_{k=1}^{[Tt]}\left[X^2(k)-1\right]$$
converges (as $T\to\f$) weakly to a process (called Rosenblatt process)
which has the double Wiener-It\^o integral representation
\beq
\label{nc-6}
Q(t):=C_\al\int_{\mathbb{R}^2}^{''}\frac{e^{it(x+y)}-1}{i(x+y)}
|x|^{-\al/2}|y|^{-\al/2}dZ(x)dZ(y).
\eeq
The distribution of the random variable $Q(t)$ in (\ref{nc-6}) for
$t =1$ is described in Veillette and Taqqu \cite{VT}.
}
\end{rem}

\begin{rem}
{\rm The slowly varying functions $L_1$ and $L_2$ in (\ref{nc-2})
are of importance because they provide a great flexibility
in the choice of spectral density $f$ and generating function $g$.
Observe that in Theorem \ref{NCT} the functions $L_1$ and $L_2$
influence only the normalization (see (\ref{1-16})),
but not the limit $Q(t)$.
Theorem \ref{cth1}(E) shows that in the critical case $\al+\be=1/2$
the limit distribution of the standard normalized quadratic form $Q_T$
is Gaussian and essentially depends on slowly varying factors
$L_1$ and $L_2$.

Note also that the critical case $\al+\be=1/2$ was partially
investigated by Terrin and Taqqu in \cite{TT3}.
Starting from the limiting random variable $Q(1)= Q(1;\al, \be)$,
which exists only when $\al+\be>1/2$, they showed the random variable
$$(\al+\be-1/2)Q(1;\al, \be)$$
converges in distribution to a Gaussian
random variable as $\al+\be$ approaches to $1/2$.}
\end{rem}

\begin{rem}
{\rm For continuous-time processes the problem b) has not been investigated, 
and it would be of interest to describe the limiting non-Gaussian distribution 
of the quadratic functional $Q_T$.}
\end{rem}

\subsection{Berry-Ess\'een Bounds and Large Deviations for
Toeplitz Quadratic Functionals}

In this section, we briefly discuss Berry-Ess\'een bounds in CLT
and large deviations principle for quadratic functionals
both for continuous- and discrete-time Gaussian stationary processes
(for more about these topics we refer to
\cite{BGR1,BrD,DC,GRZ,Ih,NP,Ta1}, and reference therein).

\vskip1mm
\n {\bf Berry-Ess\'een Bounds.}
Let $Q_T$ and $\widetilde Q_T$
be as in (\ref{c-1}) and (\ref{c-3}), respectively.
Denote $\widehat Q_T:=\widetilde Q_T/\sqrt{{\rm Var}(\widetilde Q_T)}$,
and let $Z$ be the standard normal random variable: $Z \sim N(0,1)$.
The CLT for $Q_T$  (Theorem \ref{cth1}) tells us that
$\widehat Q_T\longrightarrow Z$ in distribution as $T\to\f$.
The natural next step concerns the closeness between the
distribution of $\widehat Q_T$ and standard normal distribution,
which means asking for the rate of convergence in the CLT.
Results of this type are known as Berry-Ess\'een bounds (or asymptotics).

In discrete-time case, for special
quadratic functionals, Berry-Ess\'een bounds were established
in  Tanoguchi \cite {Ta1},
while for continuous-time case, Berry-Ess\'een-type bounds were obtained in
Nourdin and Peccati \cite {NP}.
The next theorem captures both cases.
\begin{thm}
\label{bth1}
Let $\widetilde Q_T$ be as in (\ref{c-3}),
$\widehat Q_T:=\widetilde Q_T/\sqrt{{\rm Var}(\widetilde Q_T)}$,
and $\Phi(z)=P(Z\le z)$, where $Z \sim N(0,1)$.
Assume that $f(\la)\in L^1(\Lambda)\cap L^p(\Lambda)$ $(p\ge1)$
and  $g(\la)\in L^1(\Lambda)\cap L^q(\Lambda)$ $(q\ge1)$.
The following assertions hold.
\begin{itemize}
\item[1.]
If $1/p+1/q\le1/4$, then there exists a constant
$C=C(f,g)>0$ such that for all $T>0$ we have
\beq
\label{be1}
\sup_{z\in\mathbb{R}}|P(\widehat Q_T\le z)-\Phi(z)|
\le\frac{C}{\sqrt{T}}.
\eeq

\item[2.]
If $1/p+1/q\le1/8$ and $ \int_\Lambda f^3(\la)g^3(\la)\,d\la \ne0$,
then there exist a constant $c=c(f,g)>0$ and a number
$T_0=T_0(f,g)>0$ such that $T>T_0$ implies
\beq
\label{be2}
\sup_{z\in\mathbb{R}}|P(\widehat Q_T\le z)-\Phi(z)|
\ge\frac{c}{\sqrt{T}}.
\eeq
More precisely, for any $z\in\mathbb{R}$, we have as $T\to\f$
\beq
\label{be3}
\sqrt{T}|P(\widehat Q_T\le z)-\Phi(z)|\longrightarrow
\sqrt{\frac23}\frac{\int_\Lambda f^3(\la)g^3(\la)d\la}
{\left(\int_\Lambda f^2(\la)g^2(\la)d\la\right)^{3/2}}(1-z^2)e^{-z^2/2}.
\eeq
\end{itemize}
\end{thm}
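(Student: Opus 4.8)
The plan is to combine the cumulant/trace machinery of the previous sections with the Malliavin--Stein (Nourdin--Peccati) approach to normal approximation, exploiting that $\widehat Q_T$, after centering and normalization, is a unit-variance element of the second Wiener chaos generated by the spectral stochastic integral representation of $\{X(u)\}$. The engine throughout is the cumulant formula (\ref{c-5}), $\chi_k(\widetilde Q_T)=T^{-k/2}2^{k-1}(k-1)!\,\tr[A_T(f)A_T(g)]^k$ for $k\ge2$, fed by the Problem (A) trace approximations of type (A1) (Theorems \ref{T1} and \ref{T5}); since $f\ge0$ and $g\ge0$, the operator $A_T(f)A_T(g)$ has real nonnegative eigenvalues $\{\la_k\}$, so each $\tr[A_T(f)A_T(g)]^k=\sum_k\la_k^k$ is nonnegative and all cumulants are well defined. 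A virtue of this route is that it is simultaneously valid for $\Lambda=\mathbb T$ and $\Lambda=\mathbb R$, since both the cumulant formula and the trace theorems are stated in both settings.

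For assertion 1, I would first note that $1/p+1/q\le1/4$ is exactly the hypothesis making the $m=8$ trace approximation applicable to $\tr[A_T(f)A_T(g)]^4$: with $h_1=h_3=h_5=h_7:=f$ and $h_2=h_4=h_6=h_8:=g$ one has $\sum_i 1/p_i=4(1/p+1/q)\le1$, so condition (A1) applies. Hence $\frac1T\tr[A_T(f)A_T(g)]^4$ converges, giving $\chi_4(\widetilde Q_T)=O(T^{-1})$, while the $m=4$ case gives $\chi_2(\widetilde Q_T)\to\si_0^2>0$. After normalization, $\kappa_4(\widehat Q_T)=O(T^{-1})$, and the fourth-moment Berry--Ess\'een bound for a unit-variance element of the second chaos, $\sup_z|P(\widehat Q_T\le z)-\Phi(z)|\le C\sqrt{\kappa_4(\widehat Q_T)}$, immediately delivers the rate $O(T^{-1/2})$.

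For assertion 2, the plan is to establish a one-term Edgeworth expansion, uniform in $z$, of the form $P(\widehat Q_T\le z)-\Phi(z)=\tfrac16\,\kappa_3(\widehat Q_T)(1-z^2)\phi(z)+o(T^{-1/2})$, where $\phi$ is the standard normal density. The leading coefficient comes from the $m=6$ trace approximation: $\chi_3(\widetilde Q_T)=8T^{-3/2}\tr[A_T(f)A_T(g)]^3\sim 8(2\pi)^5T^{-1/2}\int_\Lambda f^3g^3$, so that $\sqrt T\,\kappa_3(\widehat Q_T)\to 8(2\pi)^5\int_\Lambda f^3g^3\big/(16\pi^3\int_\Lambda f^2g^2)^{3/2}$; multiplying by $\tfrac16\phi(z)(1-z^2)$ and simplifying produces the constant in (\ref{be3}). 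The hypothesis $\int_\Lambda f^3g^3\ne0$ ensures this leading term is nonzero, so choosing any $z$ with $1-z^2\ne0$ yields the matching lower bound (\ref{be2}).

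The main obstacle is the rigorous control of the Edgeworth remainder in assertion 2, namely showing that every contribution beyond the $\kappa_3$-term is genuinely $o(T^{-1/2})$ uniformly in $z$. Because $\widehat Q_T$ is a quadratic form, its characteristic function factorizes (up to centering and scaling) as a product $\prod_k|1-2is\la_k|^{-1/2}$, so the remainder in the Fourier inversion is governed by the behavior of this product over the full range of $s$, which is controlled by the high-power sums $\sum_k\la_k^{N}=\tr[A_T(f)A_T(g)]^{N}$. Guaranteeing that these are finite and of the correct order, so that the expansion is valid and the remainder is controlled as in the arguments of Taniguchi \cite{Ta1} and Nourdin and Peccati \cite{NP}, is precisely what forces the stronger integrability $1/p+1/q\le1/8$: although $\kappa_4$ and $\kappa_3^2$ are already $O(T^{-1})=o(T^{-1/2})$ under the weaker $1/4$ condition, the uniform-in-$z$ inversion and the lower bound need this additional margin (finiteness of $\int_\Lambda f^8g^8$).
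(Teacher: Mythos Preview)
Your proposal is correct and follows essentially the same route the paper indicates: the paper does not supply its own detailed argument for Theorem~\ref{bth1} but simply records (in the remark immediately following it) that the continuous-time case is due to Nourdin and Peccati \cite{NP}, combining Stein's method with the trace/CLT machinery of Section~\ref{CLT}, and that the discrete-time case is analogous. Your outline---second-chaos structure, cumulant control via the (A1)-type trace approximations at levels $m=4,6,8$, the fourth-moment Berry--Ess\'een bound for the upper estimate, and an Edgeworth-type expansion driven by $\kappa_3$ for the exact asymptotics and lower bound---is precisely that program. One small remark: in your final paragraph you slide from the Malliavin--Stein mechanism to a characteristic-function/Fourier-inversion argument \`a la Taniguchi~\cite{Ta1}; either route works here, but the exact pointwise limit (\ref{be3}) in \cite{NP} is obtained through Stein's equation rather than through inversion, so keep the two threads separate when you write it up.
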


\begin{rem}
{\rm In the continuous-time case, Theorem \ref{bth1} was proved
in Nourdin and Peccati \cite {NP}, by appealing to
a general CLT of Section \ref{CLT} (Theorem \ref{cth1}),
and Stein's method. The proof, in the discrete-time case,
is similar to that of the continuous-time case.}
\end{rem}

\n {\bf Large Deviations.}
We now present sufficient conditions that ensure large deviations
principle (LDP) for Toeplitz type quadratic functionals of
stationary Gaussian processes.
For more about LDP we refer to Bryc and  Dembo \cite{BrD},
Bercu et al. \cite{BGR1}, Sato et al. \cite{SKT},
Taniguchi and Kakizawa \cite{TK}, and references therein.

First observe that large deviation theory can be viewed
as an extension of the law of large numbers (LLN).
The LLN states that certain probabilities converge to zero,
while the large deviation theory focuses on the rate of convergence.
Specifically, consider a sequence of random variables $\{\xi_n, \, n\ge1\}$
converging in probability to a real constant $m$.
Note that $\xi_n$ could represent, for instance, the $n$-th partial
sum of another sequence of random variables:
$\xi_n = \frac1n \sum_{k=1}^n\eta_k$,
where the sequence $\{\eta_k\}$ may be independent identically distributed,
or dependent as in an observed stretch of a stochastic process.
By the LLN, we have for $\vs>0$
\beq
\label{ld1}
\Ph\{|\xi_n - m|>\vs\} \to 0 \q {\rm as} \q n\to\f.
\eeq
It is often the case that the convergence in (\ref{ld1})
is exponentially fast, that is,
\beq
\label{ld2}
\Ph\{|\xi_n - m|>\vs\} \approx R(\cd)\exp[-nI(\vs,m)] \q {\rm as} \q n\to\f,
\eeq
where $R(\cd) = R(\vs,m,n)$ is a slowly varying
(relative to an exponential) function of $n$
and $I(\vs,m)$ is a positive quantity.
\n Loosely, if (\ref{ld2}) holds, we say that the sequence $\{\xi_n\}$
satisfies a large deviations principle.
One of the basic problems of the large deviation theory is
to determine $I(\vs,m)$ and $R(\vs,m,n).$
To be more precise, we recall the definition of
Large Deviation Principle (LDP) (see, for instance,  \cite{BrD,TK}).

\begin{den}
\label{ldd1}
{\rm Let  $\{\xi_n,$  $n \in \mathbb{Z}\}$  be a sequence of real--valued
random variables defined on the probability space $(\Om, {\cal F}, \Ph)$.
We say that $\{\xi_n \}$ satisfies a Large Deviation Principle (LDP)
with speed $a_n \to 0$ and rate function $I: \mathbb{R}\to [0,\infty]$,
if $I(x)$ is lower semicontinuous,
that is, if $x_n\to x$ then $\liminf_{n\to\f}I(x_n)\ge I(x)$, and
$$\liminf_{n\to\infty}a_n \log \Ph\{\xi_n\in A\}\geq -
\inf_{x\in A} I(x)$$
for all open subsets $A\subset \mathbb{R}$, while
$$\limsup_{n\to\infty} a_n \log \Ph\{\xi_n\in B\}\leq -
\inf_{x\in B} I(x)$$
for all closed subsets $B\subset \mathbb{R}$.
The function
$I(x)$ is called a good rate function if its level sets are compact,
that is, the set $\{x\in \mathbb{R}: I(x)\le b\}$ is compact
for each $b\in \mathbb{R}$}.
\end{den}

Now let $Q_T$ be the Toeplitz type quadratic
functionals of a process $X(u)$ defined by
(\ref{c-1}) with spectral density $f(\la)$ and generating function
$g(\la)$.

The next result states sufficient conditions in terms
of $f(\la)$ and $g(\la)$ to ensure that the LDP
for normalized quadratic functionals $\{\frac1TQ_T\}$
holds both for discrete- and continuous-time processes.

\begin{thm}
\label{4-10}
Assume that $f(\la)g(\la) \in L^\f(\Lambda)$.
Then the random variable $\{\frac1TQ_T\}$ satisfies a LDP
with the speed $a_T = \frac1T$ and a good rate function $I(x)$:
$$
I(x)=\sup_{-\infty<y<1/(2C)} \{xy-V(f,g;y)\},
$$
where $C=\hbox{ess sup} f(\la)|g(\la)|$ and for $y<1/(2C)$
$$
V(f,g;y) =-\frac{1}{4\pi}\int_\Lambda \log (1- 2y f(\la)g(\la))d\la.
$$
\end{thm}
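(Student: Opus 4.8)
The plan is to apply the G\"artner--Ellis theorem to the sequence $Z_T:=\frac1T Q_T$, whose associated normalized cumulant generating function is $\Lambda(y)=\lim_{T\to\f}\frac1T\log\mathbb{E}[\exp(yQ_T)]$. Since $f$ is a spectral density and (as in Section \ref{CLT}) we may assume $g\ge0$, the operators $A_T(f)$ and $A_T(g)$ are nonnegative definite, so by the eigenvalue representation recalled in Section \ref{link} the functional $Q_T$ has the same law as $\sum_k\la_{k,T}\xi_k^2$, where $\{\xi_k\}$ are i.i.d. $N(0,1)$ and $\{\la_{k,T}\}$ are the nonnegative eigenvalues of $A_T(f)A_T(g)$. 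Consequently the Laplace transform factorizes and
\beqq
\frac1T\log\mathbb{E}[\exp(yQ_T)]=-\frac1{2T}\sum_k\log(1-2y\la_{k,T})
=-\frac1{2T}\tr\log\bigl(I-2yA_T(f)A_T(g)\bigr),
\eeqq
which is finite precisely when $2y\,\rho\bigl(A_T(f)A_T(g)\bigr)<1$, $\rho(\cd)$ denoting the spectral radius.

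The core of the proof is to show that the right-hand side converges, for every $y\in(-\f,1/(2C))$, to $V(f,g;y)$. First I would establish a uniform spectral bound: using $fg\in L^\f(\Lambda)$ together with the norm estimates for Toeplitz operators, one shows $\limsup_T\rho(A_T(f)A_T(g))\le C$ (in the appropriate normalization), so that for $y<1/(2C)$ the quantity $2y\la_{k,T}$ stays bounded away from $1$ uniformly in $T$ and $k$. This makes the expansion $-\frac12\log(1-2yx)=\frac12\sum_{n\ge1}(2yx)^n/n$ legitimate and uniformly summable, so the $T\to\f$ limit may be taken term by term. For fixed $n$ the $n$-th term is governed by $\frac1T\tr[A_T(f)A_T(g)]^n$, which is the trace functional $S_{A,\mathcal H}(T)$ with $m=2n$ and generating functions alternating between $f$ and $g$; by the results of Section \ref{ATO} (Theorem \ref{T5}) and their discrete counterparts it converges to $(2\pi)^{2n-1}\int_\Lambda[f(\la)g(\la)]^n\,d\la$. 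Resumming over $n$ collapses the series back into a single logarithm and, after collecting the normalization constants, produces $\Lambda(y)=V(f,g;y)$; for $y\ge1/(2C)$ one checks, using that $fg$ attains values arbitrarily close to $C$ on sets of positive measure, that both $\Lambda(y)$ and $V(f,g;y)$ equal $+\f$.

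It then remains to verify the hypotheses of G\"artner--Ellis. The map $V(f,g;\cd)$ is finite and, by differentiation under the integral sign, real-analytic on the open interval $(-\f,1/(2C))$, which contains the origin; hence $\Lambda$ is finite near $0$ and lower semicontinuous on $\R$. The essential-smoothness (steepness) condition requires $V'(f,g;y)=\frac1{2\pi}\int_\Lambda\frac{fg}{1-2yfg}\,d\la\to+\f$ as $y\uparrow1/(2C)$, which follows because the denominator $1-2yfg$ tends to $0$ on the positive-measure sets where $fg$ is near its essential supremum $C$. With essential smoothness in hand, the G\"artner--Ellis theorem yields the LDP for $\{\frac1TQ_T\}$ with speed $1/T$ and good rate function $\Lambda^*(x)=\sup_{y<1/(2C)}\{xy-V(f,g;y)\}=I(x)$, as claimed.

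I expect the main obstacle to be the analytic control underlying the middle paragraph: proving the uniform-in-$T$ bound $\limsup_T\rho(A_T(f)A_T(g))\le C$ and justifying the interchange of $\lim_{T\to\f}$ with the summation over $n$. Equivalently this is a first-order Szeg\"o-type limit theorem for $\tr\,\psi(A_T(f)A_T(g))$ with $\psi(x)=\log(1-2yx)$, and it is precisely where the hypothesis $fg\in L^\f(\Lambda)$ is genuinely used; verifying steepness at the endpoint $1/(2C)$ is the secondary delicate point.
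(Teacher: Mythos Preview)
The paper does not give its own proof of this theorem: the statement is followed only by a remark referring the reader to Bryc and Dembo \cite{BrD} (case $g\equiv1$) and to Bercu, Gamboa and Rouault \cite{BGR1} for the general case. Your outline via the G\"artner--Ellis theorem, the spectral representation of $Q_T$, and a Szeg\"o-type evaluation of $\lim_T\frac1T\log\mathbb{E}e^{yQ_T}$ is exactly the strategy those references follow, and you have correctly singled out the two genuinely delicate points (the uniform spectral-radius bound and steepness at $y=1/(2C)$).

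One caution about the middle step: you propose to obtain $\frac1T\tr[A_T(f)A_T(g)]^n\to(2\pi)^{2n-1}\int_\Lambda (fg)^n\,d\la$ by invoking Theorem~\ref{T5}. That theorem, however, requires $f\in L^{p}$ and $g\in L^{q}$ with $n/p+n/q\le1$, a condition that becomes increasingly restrictive as $n$ grows and is \emph{not} implied by $fg\in L^\infty$ together with $f,g\in L^1$ (take $f(\la)\sim|\la|^{-1/2}$, $g(\la)\sim|\la|^{1/2}$ near the origin). In the cited references this is handled differently: one proves the Szeg\"o-type limit $\frac1T\tr\,\psi(A_T^{1/2}(f)A_T(g)A_T^{1/2}(f))\to\frac1{2\pi}\int_\Lambda\psi(2\pi fg)\,d\la$ first for polynomials $\psi$ using only the boundedness of $fg$ and the operator inequality $0\le A_T^{1/2}(f)A_T(g)A_T^{1/2}(f)\le A_T(fg)$, and then passes to $\psi(x)=-\tfrac12\log(1-2yx)$ by a Stone--Weierstrass approximation on the compact interval $[0,C]$. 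That argument relies on the hypothesis $fg\in L^\infty$ directly, rather than on separate integrability of $f$ and $g$, and is what you should substitute for the appeal to Theorem~\ref{T5}.
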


\begin{rem} {\rm
In the special case of $g(\la) = 1$, Theorem \ref{4-10} was proved by
Bryc and  Dembo \cite{BrD}, for general case we refer to
Bercu et al. \cite{BGR1}.}
\end{rem}

\section{Proof of Theorems \ref{T5} -  \ref{T5-3}}
\label{PR}

We only prove the results concerning Toeplitz operators
(Theorems \ref{T5} -  \ref{T5-3}). The proofs of the corresponding
results for Toeplitz matrices are similar.
First we state a number of technical lemmas, 
which are proved in the Appendix.

The following result is known (see, e.g., \cite{GS2}, \cite{GKSu}, p. 8).
\begin{lem}
\label{lem01}
Let $D_T(u)$ be the Dirichlet kernel
\begin{equation}
\label{a-2} D_T(u)=\frac{\sin(Tu/2)}{u/2}.
\end{equation}
Then, for any $\de\in(0,1)$
\beq
\label{01}
|D_T(u)|\leq 2 \,{T^\delta}{|u|^{\delta-1}},\quad u\in \rr.
\eeq
\end{lem}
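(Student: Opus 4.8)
The plan is to reduce the asserted estimate to the elementary inequality $|\sin y|\le |y|^{\delta}$, valid for every real $y$ and every $\delta\in(0,1)$. First I would rewrite the kernel in absolute value as
$$
|D_T(u)|=\frac{|\sin(Tu/2)|}{|u|/2}=\frac{2\,|\sin(Tu/2)|}{|u|},
$$
so that, after clearing the factor $|u|$, the claim $|D_T(u)|\le 2T^{\delta}|u|^{\delta-1}$ becomes equivalent to the single scalar inequality $|\sin(Tu/2)|\le T^{\delta}|u|^{\delta}$. (The value $u=0$ needs no attention, since the right-hand side of the asserted bound is infinite there, the inequality holding trivially; so I may assume $u\ne 0$.)

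Next I would establish the key inequality $|\sin y|\le |y|^{\delta}$ for all $y\in\mathbb{R}$. For $|y|\ge 1$ this is immediate, because $|\sin y|\le 1\le |y|^{\delta}$. For $0<|y|\le 1$ I would combine the standard bound $|\sin y|\le |y|$ with the observation that, when $0<|y|\le 1$, the map $t\mapsto |y|^{t}$ is nonincreasing; since $\delta<1$ this gives $|y|=|y|^{1}\le |y|^{\delta}$, and hence $|\sin y|\le |y|^{\delta}$. This is the only place where the hypothesis $\delta\in(0,1)$ is genuinely used, and the direction of the monotonicity of $|y|^{t}$ in the regime $|y|\le 1$ is the one point that warrants a moment's care.

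Finally I would substitute $y=Tu/2$ into the established inequality to obtain
$$
|\sin(Tu/2)|\le\left(\frac{T|u|}{2}\right)^{\delta}=2^{-\delta}\,T^{\delta}|u|^{\delta},
$$
and then feed this back into the rewritten kernel:
$$
|D_T(u)|=\frac{2\,|\sin(Tu/2)|}{|u|}\le 2^{1-\delta}\,T^{\delta}|u|^{\delta-1}\le 2\,T^{\delta}|u|^{\delta-1},
$$
where the last step uses $2^{1-\delta}\le 2$ for $\delta\in(0,1)$. This yields the stated bound, in fact with the marginally sharper constant $2^{1-\delta}$. I do not expect any real obstacle in this argument; the whole content is the scalar inequality $|\sin y|\le|y|^{\delta}$, and the rest is bookkeeping of the substitution and the constant.
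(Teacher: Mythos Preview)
Your argument is correct. The paper does not actually supply a proof of this lemma; it merely states that the result is known and cites \cite{GS2} and \cite{GKSu}. Your direct reduction to the scalar inequality $|\sin y|\le |y|^{\delta}$ is exactly the standard way to see it, and your observation that one in fact obtains the slightly sharper constant $2^{1-\delta}$ is accurate.
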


Denote
\bea
\label{lem72-0}
&&G_T(u):=\int_0^Te^{iTu}dt=e^{iTu/2}D_T(u),\quad u\in\mathbb{R},\\
\label{lem72-1}
&& \Phi_T({\bf u}):\,
=\frac1{(2\pi)^{m-1}T}\cd D_T(u_1)\cdots
D_T(u_{m-1})D_T(u_1+\cdots+u_{m-1}),\\
&&\label{lem72-2}
\Psi({\bf u}):\,
=\I(u_1,u_1+u_2,\ldots, u_1+\cdots+u_{m-1}),
\eea
where ${\bf u}=(u_1,\ldots, u_{m-1})\in \rr^{m-1}$ and the function
$\varphi({\bf u})$, corresponding to the collection
$\mathcal{H}=\{h_1,h_2,\ldots,h_m\}$,
is defined by (\ref{n 4-6}).

The next lemma follows from  (\ref{n 4-4}) and
(\ref{lem72-0}) -- (\ref{lem72-2})
(cf. \cite{GS2}, Lemma 1).

\begin{lem}
\label{lem72}
Let  $\mathcal{H}=\{h_1,h_2,\ldots,h_m\}$ be a collection
of integrable real symmetric functions on $\mathbb{R}$,
${\hat h}_k$ be the Fourier transform of function $h_k$ $(k=1,\ldots, m)$,
and let $S(T):=S_{W,\mathcal{H}}(T)$ be as in (\ref{n 4-4}).
The following equalities hold.
\beaa
\label{lem72-6}
&&1)\ S(T)=\frac 1T\int_{0}^T\ldots \int_0^T
{\hat h}_1(u_1-u_2){\hat h}_2(u_2-u_3)\ldots {\hat h}_m(u_m-u_1)\\
\label{lem72-7}
&&2)\ S(T)=\frac 1T \int_{\rr^m}
h_1(u_1)\ldots h_m(u_m)G_T(u_1-u_2)G_T(u_2-u_3)\cdots \\
&&\hskip5cm\times G_T(u_m-u_1)du_1\ldots du_m.\\
&&3)\ S(T)= {(2\pi)^{m-1}}\int_{\rr^{m-1}}\Psi({\bf u})
 \Phi_T({\bf u})d{\bf u}.
 \label{lem72-8}
\eeaa
\end{lem}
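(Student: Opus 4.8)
The plan is to establish the three identities successively, each feeding into the next, starting from the definition $S(T)=\frac1T\tr\left[\prod_{i=1}^m W_T(h_i)\right]$ in (\ref{n 4-4}). For 1) I would compute the kernel of the product operator. By (\ref{MT3-1}) each $W_T(h_i)$ is the integral operator on $L^2[0,T]$ with kernel $\hat h_i(t-s)$, so iterating the composition rule shows that $\prod_{i=1}^m W_T(h_i)$ is again an integral operator, with kernel $K(t,s)=\int_{[0,T]^{m-1}}\hat h_1(t-s_1)\hat h_2(s_1-s_2)\cdots\hat h_m(s_{m-1}-s)\,ds_1\cdots ds_{m-1}$. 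Applying the trace formula for integral operators (\cite{GK}, p.~114), $\tr[\,\cdot\,]=\int_0^T K(t,t)\,dt$, and relabelling $u_1=t$, $u_2=s_1,\ldots,u_m=s_{m-1}$, produces exactly the cyclic integrand $\hat h_1(u_1-u_2)\cdots\hat h_m(u_m-u_1)$ of 1) after division by $T$. All integrals here are absolutely convergent since each $\hat h_i$ is bounded (being the Fourier transform of an $L^1$ function) and the domain $[0,T]^m$ is bounded.

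To pass from 1) to 2), I would insert the Fourier representation $\hat h_k(u_k-u_{k+1})=\itf e^{i\la_k(u_k-u_{k+1})}h_k(\la_k)\,d\la_k$ from (\ref{FT}) (with the cyclic convention $u_{m+1}=u_1$) and interchange the order of integration. The interchange is legitimate by Fubini's theorem because $\int_{[0,T]^m}\int_{\rr^m}\prod_k|h_k(\la_k)|\,d\la\,du=T^m\prod_k\|h_k\|_1<\f$. After the swap the exponent $\sum_k\la_k(u_k-u_{k+1})$ is linear in each $u_j$ with coefficient $\la_j-\la_{j-1}$ (cyclically, $\la_0=\la_m$), so the integral over $[0,T]^m$ factorizes into $\prod_j\int_0^T e^{iu_j(\la_j-\la_{j-1})}\,du_j=\prod_j G_T(\la_j-\la_{j-1})$, recalling $G_T$ from (\ref{lem72-0}). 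Relabelling $\la_k\to u_k$ and then substituting $u_k\mapsto -u_k$, which is permissible since each $h_k$ is even, brings the product of $G_T$-factors into the cyclic form $G_T(u_1-u_2)\cdots G_T(u_m-u_1)$ and yields 2).

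Finally, for 3) I would perform the unimodular change of variables $\la=u_1$, $w_k=u_k-u_{k+1}$ ($k=\overline{1,m-1}$) in 2), under which $u_{j+1}=\la-(w_1+\cdots+w_j)$ and $u_m-u_1=-(w_1+\cdots+w_{m-1})$. The inner $\la$-integral then becomes $\itf h_1(\la)h_2(\la-w_1)\cdots h_m(\la-(w_1+\cdots+w_{m-1}))\,d\la=\varphi(w_1,w_1+w_2,\ldots,w_1+\cdots+w_{m-1})=\Psi(\mathbf{w})$ by (\ref{n 4-6}) and (\ref{lem72-2}), while the accompanying kernel product is $G_T(w_1)\cdots G_T(w_{m-1})G_T(-(w_1+\cdots+w_{m-1}))$. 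Writing $G_T(u)=e^{iTu/2}D_T(u)$, the exponential phases telescope to $1$ and the evenness of $D_T$ (immediate from (\ref{a-2})) turns this into $D_T(w_1)\cdots D_T(w_{m-1})D_T(w_1+\cdots+w_{m-1})=(2\pi)^{m-1}T\,\Phi_T(\mathbf{w})$ by (\ref{lem72-1}), which delivers 3).

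The argument is in essence careful bookkeeping; the two steps that genuinely require attention are the Fubini interchange in the passage $1)\to 2)$, which rests on the $L^1$-integrability of the $h_k$, and the cancellation of the phase factors $e^{\pm iTu/2}$ in 3), which is precisely what lets the non-symmetric kernels $G_T$ collapse onto the even Dirichlet kernels $D_T$ that constitute $\Phi_T$.
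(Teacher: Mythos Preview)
Your argument is correct and is exactly the computation the paper has in mind: the paper itself does not give a proof but simply says the lemma ``follows from (\ref{n 4-4}) and (\ref{lem72-0})--(\ref{lem72-2}) (cf.\ \cite{GS2}, Lemma 1)'', and what you have written is precisely the unpacking of that sentence. The three steps --- trace formula for the composite kernel, insertion of the Fourier representation with a Fubini swap, and the unimodular change of variables collapsing the $G_T$-phases --- are the standard route, and your justification of Fubini via $T^m\prod_k\|h_k\|_1<\infty$ and of the phase cancellation via $G_T(u)=e^{iTu/2}D_T(u)$ with $D_T$ even are exactly the points that need care.
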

For $m=3,4\ldots$ and $\delta>0$ we denote
$$
{\mathbb E}_\de=\{(u_1,\ldots, u_{m-1})\in {\mathbb R }^{m-1}:
\, |u_i|\le\de, \, i=1,\ldots,m-1\},\quad
{\mathbb E}_\de^c={\mathbb R}^{m-1}\sm{\mathbb E}_\de
$$ 
and
$$
p_3=2,\quad p(m)=\frac{m-2}{m-3}\quad (m>3).
$$
\begin{lem}\label{lem73}
The kernel  $\Phi_T(\uu),\ \uu\in {{\mathbb R}^{m-1}}$, $m\geq 3$
possesses the following properties:
\begin{align}
\label{z0}
a)&  \q \sup_{T}\int_{{\mathbb R}^{m-1}}|\Phi_T(\uu)|\,d\uu =C_1<\f;\notag\\
b)&  \q\int_{{\mathbb R}^{m-1}}\Phi_T(\uu)\,d\uu =1;\notag\\
c)& \q\lim_{T\to\f}\int_{{\mathbb E}^c_\de}|\Phi_T(\uu)|\,d\uu
=0 \q\text {for any} \q \de>0;\notag\\
d)&\q\text {for any $\de>0$ there exists a constant
$C_\de>0$ such that}\notag\\
& \q \int_{\mathbb{E}^c_\de}\left|\Phi_T(\uu)\right|^{p(m)}d\uu\le C_\de
\q \text {for \q $T>0$},
\end{align}
\end{lem}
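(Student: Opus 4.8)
The plan is to reduce all four properties to a single scale-free kernel by exploiting the homogeneity of the Dirichlet kernel. Writing $D_1(v):=\sin(v/2)/(v/2)$ for the kernel (\ref{a-2}) at $T=1$ and substituting $v_j=Tu_j$, one checks $D_T(u_j)=T\,D_1(v_j)$ and $D_T(u_1+\cdots+u_{m-1})=T\,D_1(v_1+\cdots+v_{m-1})$, so that
$$\Phi_T(\uu)=T^{m-1}\Phi_1({\bf v}),\qquad {\bf v}=T\uu,\qquad \Phi_1({\bf v})=\frac1{(2\pi)^{m-1}}\prod_{j=1}^{m-1}D_1(v_j)\cdot D_1\Big(\sum_{j=1}^{m-1}v_j\Big).$$
In particular the change of variables ${\bf v}=T\uu$ turns $\int_{\rr^{m-1}}|\Phi_T(\uu)|\,d\uu$ into the $T$-independent integral $(2\pi)^{-(m-1)}J_m$, where $J_m:=\int_{\rr^{m-1}}\prod_{j=1}^{m-1}|D_1(v_j)|\,|D_1(\sum_jv_j)|\,d{\bf v}$; hence $(a)$ is equivalent to $J_m<\f$.

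For $(b)$ I would avoid absolute values and compute the signed integral exactly. Using $D_T(u)=\int_{-T/2}^{T/2}e^{ius}\,ds$, I would insert this representation for the last factor $D_T(\sum_ju_j)$, apply Fubini, and use the inversion identity $\int_\rr D_T(u)e^{iut}\,du=2\pi\,{\bf 1}_{[-T/2,T/2]}(t)$ for each of the remaining $m-1$ factors. This produces $(2\pi)^{m-1}$ and collapses the integral to $\int_{-T/2}^{T/2}{\bf 1}_{[-T/2,T/2]}(t)^{m-1}\,dt=T$, giving $\int_{\rr^{m-1}}\Phi_T(\uu)\,d\uu=1$.

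Finiteness of $J_m$ in $(a)$ is delicate because each factor decays only like $|v|^{-1}$, exactly borderline for integrability; I would integrate the variables out one at a time. The one-dimensional estimate $\int_\rr|D_1(v)|\,|D_1(a-v)|\,dv\le C\,\log(2+|a|)/(1+|a|)$, which follows from $|D_1(v)|\le C\min(1,|v|^{-1})$ (a case of Lemma \ref{lem01}), shows that each integration yields a factor decaying like $(1+|a|)^{-1+\vs}$, and one verifies by induction that the accumulated logarithmic losses remain integrable, so $J_m<\f$. Property $(c)$ is then immediate from $(a)$: after the same scaling, $\int_{{\mathbb E}^c_\de}|\Phi_T|\,d\uu=(2\pi)^{-(m-1)}\int_{\{\exists i:\,|v_i|>T\de\}}\prod_j|D_1(v_j)|\,|D_1(\sum_jv_j)|\,d{\bf v}$, and since the integrand is a fixed integrable function while the region of integration shrinks to the empty set as $T\to\f$, dominated convergence forces the limit $0$.

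The heart of the matter is $(d)$, where the exponent $p(m)=(m-2)/(m-3)$ is dictated by an exact balance. Scaling gives $\int_{{\mathbb E}^c_\de}|\Phi_T|^{p}\,d\uu=T^{(m-1)(p-1)}\int_{\{\exists i:\,|v_i|>T\de\}}|\Phi_1|^{p}\,d{\bf v}$, so a growing prefactor $T^{(m-1)(p-1)}$ must be absorbed by the decay of the tail integral. Since $p>1$ the function $\phi:=|D_1|^{p}$ satisfies $\phi\in L^1(\rr)$ and $\phi(v)\le C(1+|v|)^{-p}$; integrating out the $m-2$ inner variables identifies the inner integral with the $(m-1)$-fold convolution $\phi^{*(m-1)}$, and the convolution bound $\phi^{*k}(v)\le C(1+|v|)^{-p}$ (valid for $p>1$, proved by splitting the convolution according to $|y|\le|v|/2$ or $|y|>|v|/2$) yields $\int_{\{|v_1|>R\}}|\Phi_1|^{p}\,d{\bf v}\le C\int_{|v_1|>R}(1+|v_1|)^{-2p}\,dv_1=O(R^{-(2p-1)})$ with $R=T\de$. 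The decisive arithmetic is that $2p-1=(m-1)(p-1)=(m-1)/(m-3)$ for this exact value of $p$, so the tail decay precisely cancels the prefactor and leaves a bound $C_\de=C\de^{-(2p-1)}$ independent of $T$; for small $T$ one instead bounds the tail integral by $\int_{\rr^{m-1}}|\Phi_1|^{p}\le C\|\phi\|_\infty\|\phi\|_1^{m-1}<\f$, so that $\int_{{\mathbb E}^c_\de}|\Phi_T|^{p}\le T^{(m-1)(p-1)}C\|\phi\|_\infty\|\phi\|_1^{m-1}\to0$. The main obstacle is thus twofold: controlling the borderline $|v|^{-1}$ decay in $(a)$ with its logarithmic losses, and, in $(d)$, establishing the convolution tail bound and verifying the exponent identity that makes the estimate uniform in $T$.
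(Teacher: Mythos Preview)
Your approach is correct but differs from the paper's, chiefly in part (d). The paper does not rescale; it works directly with $\Phi_T$ and uses only two elementary facts about the Dirichlet kernel, namely $\int_{\rr}|D_T(u)|^{p(m)}\,du\le C\,T^{p(m)-1}$ and $|D_T(u)|\le C_\de$ for $|u|>\de$. On the region $\{|u_1|>\de\}$ it splits further according to whether another coordinate (or the sum) is also bounded away from zero, uses the pointwise bound on that large factor, and integrates each of the remaining $m-2$ factors over $\rr$ at a cost $T^{p(m)-1}$ apiece. The resulting power of $T$ is $-p(m)+(m-2)(p(m)-1)$, which equals $0$ for $m>3$ (and is $-1$ for $m=3$), leaving a finite $\int_{|u_j|>\de/m}|u_j|^{-p(m)}\,du_j$. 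Your scaling argument is more conceptual: it explains transparently why the critical exponent $p(m)$ arises, by matching the prefactor $T^{(m-1)(p-1)}$ against the tail decay $R^{-(2p-1)}$ of the fixed integrand, and it packages the inner integration as an $(m-1)$-fold convolution. The paper's route is shorter and avoids the convolution tail estimate $\phi^{*k}(v)\le C(1+|v|)^{-p}$ altogether; yours makes the self-similarity of the kernel explicit and would generalize more readily to other homogeneous kernels. For (a)--(c) the paper simply cites earlier references (Bentkus \cite{B}, Lemma~3.2, and \cite{GS2}, Lemma~2), so your direct arguments there add content; note only that in your proof of (b) the interchange of integrals needs the absolute integrability from (a), since $D_T\notin L^1(\rr)$.
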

The proof of the next lemma can be found in \cite{GKSu}, p. 161.
\begin{lem}
\label{lem1}
Let $0<\beta<1$, $0<\alpha<1$, and $\alpha+\beta>1$.
Then for any $y\in \rr,\ y\neq0$,
\begin{equation}
\label{a-4}
\int_\rr \frac1{|x|^\alpha|x+y|^\beta}dx = \frac C
{|y|^{\alpha+\beta-1}},
\end{equation}
 where $C$ is a constant depending on $\alpha$ and $\beta$.
\end{lem}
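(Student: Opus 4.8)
The plan is to evaluate the integral by a scaling (homogeneity) argument, after first confirming that it converges. Write $I(y):=\int_\rr |x|^{-\alpha}|x+y|^{-\beta}\,dx$. First I would check integrability at the three potentially problematic points. Near $x=0$ the integrand behaves like $|x|^{-\alpha}$, which is locally integrable since $\alpha<1$; near $x=-y$ it behaves like $|x+y|^{-\beta}$, locally integrable since $\beta<1$; and as $|x|\to\f$ the integrand decays like $|x|^{-(\alpha+\beta)}$, which is integrable at infinity precisely because $\alpha+\beta>1$. Hence $I(y)$ is finite for every $y\neq 0$ under the stated hypotheses, and each of the three conditions is used exactly once.

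The key step is the change of variables $x=|y|\,t$. Writing $y=\text{sgn}(y)\,|y|$, one has $x+y=|y|\bigl(t+\text{sgn}(y)\bigr)$, so that $|x|^{-\alpha}=|y|^{-\alpha}|t|^{-\alpha}$ and $|x+y|^{-\beta}=|y|^{-\beta}|t+\text{sgn}(y)|^{-\beta}$, while $dx=|y|\,dt$. Collecting the powers of $|y|$ then gives
$$I(y)=|y|^{\,1-\alpha-\beta}\int_\rr \frac{dt}{|t|^{\alpha}\,|t+\text{sgn}(y)|^{\beta}}.$$

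Finally I would argue that the remaining integral is independent of the sign of $y$: the substitution $t\mapsto -t$ carries $\int_\rr |t|^{-\alpha}|t+1|^{-\beta}\,dt$ into $\int_\rr |t|^{-\alpha}|t-1|^{-\beta}\,dt$, so both choices $\text{sgn}(y)=\pm 1$ yield the same value. Setting $C:=\int_\rr |t|^{-\alpha}|t+1|^{-\beta}\,dt$, which is finite by the same convergence analysis as above (the case $y=1$) and depends only on $\alpha$ and $\beta$, we obtain $I(y)=C\,|y|^{-(\alpha+\beta-1)}$, as claimed. There is no genuine obstacle in this lemma; the only point demanding care is the convergence bookkeeping at $0$, at $-y$, and at infinity, and the verification that the constant $C$ does not depend on $y$. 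If an explicit value of $C$ were desired, the integral defining it could be reduced to Beta functions, but the statement requires only that $C$ be a finite constant depending on $\alpha,\beta$, so no such computation is needed.
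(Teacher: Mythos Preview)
Your argument is correct: the convergence analysis at $0$, $-y$, and $\infty$ is right, the scaling substitution $x=|y|t$ cleanly extracts the factor $|y|^{1-\alpha-\beta}$, and the parity substitution $t\mapsto -t$ shows the constant is independent of $\mathrm{sgn}(y)$. The paper does not give its own proof of this lemma but simply cites \cite{GKSu}, p.~161; your direct homogeneity argument is the standard one and is exactly what one would expect to find there.
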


Denote $E=\{(u_1,u_2,\ldots,u_n)\in\rr^n: |u_i|\leq1,\ i=1,2,\ldots,n\}$
and let $E^C=\rr^n\setminus E$.

\begin{lem}
\label{lem2}
Let \ $0<\alpha\le 1$ and $\frac n{n+1}<\beta<\frac{n+\alpha}{n+1}$. Then
\begin{equation}
\label{a-05}
B_i:=\int_{E}
\frac{|u_i|^\alpha}{|u_1\cdots u_n(u_1+\cdots+u_n)|^\beta} \,
du_1\cdots du_n<\infty,
\q i=1,\ldots,n.
\end{equation}
\end{lem}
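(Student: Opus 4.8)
The plan is to reduce the $n$-dimensional integral $B_i$ to a one-dimensional one by integrating out, one variable at a time, the $n-1$ coordinates $u_j$ with $j\neq i$, invoking Lemma \ref{lem1} at each stage. Since the denominator $|u_1\cdots u_n(u_1+\cdots+u_n)|^\beta$ is symmetric under permutations of the coordinates and only the numerator $|u_i|^\alpha$ singles out the index $i$, after relabeling I may assume $i=1$ and integrate out $u_2,\ldots,u_n$ in succession. Inside the cube $E$ the integrand is singular only on the hyperplanes $\{u_j=0\}$ and on $\{u_1+\cdots+u_n=0\}$; the numerator $|u_1|^\alpha$ is bounded on $E$ but cannot be discarded, because $\beta>n/(n+1)$ would make $B_i$ diverge were $|u_1|^\alpha$ replaced by $1$.

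For the iteration I keep track of the exponent carried by the current partial sum. As the integrand is nonnegative, at each stage I bound $\int_{-1}^1\le\int_\rr$ and apply Lemma \ref{lem1}. The first integration, over $u_n$, pairs $|u_n|^{-\beta}$ with $|u_1+\cdots+u_n|^{-\beta}$; Lemma \ref{lem1} applies since $2\beta>1$ and produces a factor $C\,|u_1+\cdots+u_{n-1}|^{-(2\beta-1)}$. I would record, as an auxiliary claim proved by an easy induction, that after $k$ such integrations the surviving singularity along the partial-sum hyperplane has exponent $(k+1)\beta-k$. Passing from $k$ to $k+1$ integrated variables uses Lemma \ref{lem1} with the pair $\beta$ and $(k+1)\beta-k$; this is legitimate provided $\beta+\bigl[(k+1)\beta-k\bigr]>1$, i.e. $\beta>(k+1)/(k+2)$, and one checks both exponents lie in $(0,1)$. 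For $k=0,1,\ldots,n-2$ the strictest of these constraints is $\beta>(n-1)/n$, which is implied by the hypothesis $\beta>n/(n+1)$; thus all $n-1$ applications are valid, and the extension $\int_{-1}^1\le\int_\rr$ is justified by nonnegativity together with convergence at infinity, the latter being exactly the requirement that the summed exponents $2\beta,\,3\beta-1,\,4\beta-2,\ldots$ all exceed $1$.

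After the $n-1$ integrations only $u_1$ survives, the partial sum having collapsed to $u_1$ itself with accumulated exponent $n\beta-(n-1)$. The remaining factors of $u_1$ are the numerator $|u_1|^\alpha$ and the original $|u_1|^{-\beta}$, so $B_i$ is bounded by a constant times
\[
\int_{-1}^1 |u_1|^{\alpha-\beta-(n\beta-(n-1))}\,du_1
=\int_{-1}^1 |u_1|^{\alpha-(n+1)\beta+(n-1)}\,du_1 .
\]
This integral is finite precisely when $\alpha-(n+1)\beta+(n-1)>-1$, that is $\beta<(n+\alpha)/(n+1)$, which is exactly the upper hypothesis. Hence both endpoints of the admissible range enter the argument: the lower bound guarantees every application of Lemma \ref{lem1}, while the upper bound secures the final one-dimensional integrability.

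The main obstacle is the bookkeeping of exponents rather than any isolated hard estimate. One must verify at each of the $n-1$ stages that the two exponents fed into Lemma \ref{lem1} lie in $(0,1)$ and sum to strictly more than $1$, that the replacement of $\int_{-1}^1$ by $\int_\rr$ is valid there, and that the arithmetic progression $(k+1)\beta-k$ of accumulated exponents is correctly matched at its two ends to the hypotheses $n/(n+1)<\beta<(n+\alpha)/(n+1)$. Once this induction is set up cleanly, the conclusion $B_i<\infty$ follows at once.
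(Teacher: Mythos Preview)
Your proposal is correct and follows essentially the same route as the paper: reduce to $i=1$ by symmetry, enlarge the domain in $u_2,\ldots,u_n$ from $[-1,1]$ to $\rr$ by nonnegativity, then iterate Lemma~\ref{lem1} to peel off $u_n,u_{n-1},\ldots,u_2$ with accumulated exponent $(k+1)\beta-k$ on the partial sum, and finish with the one-dimensional integral $\int_{|u_1|\le 1}|u_1|^{-(n+1)\beta+\alpha+n-1}\,du_1<\infty$ under $\beta<(n+\alpha)/(n+1)$. If anything, you are more explicit than the paper in checking that at every stage the two exponents lie in $(0,1)$ and sum to more than $1$.
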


\begin{lem}
\label{lem3}
Let $\frac n{n+1}<\beta<1$. Then
\begin{equation}
\label{a-6}
I:=\int_{E^C}
\frac1{|u_1\cdots u_n(u_1+\cdots+u_n)|^\beta} \, du_1\cdots du_n<\infty.
\end{equation}
\end{lem}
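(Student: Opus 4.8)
The plan is to exploit the full permutation symmetry of the integrand to replace the awkward domain $E^{C}$ by product regions, and then to integrate the variables out one at a time using Lemma~\ref{lem1}. Since $E^{C}=\{(u_1,\ldots,u_n):|u_i|>1\text{ for some }i\}$, I have the pointwise bound $\mathbf{1}_{E^{C}}\le\sum_{i=1}^{n}\mathbf{1}_{\{|u_i|>1\}}$. Because the integrand $|u_1\cdots u_n(u_1+\cdots+u_n)|^{-\beta}$ is invariant under permutations of the $u_i$, all $n$ resulting integrals coincide, so it suffices to prove that
$$
J:=\int_{|u_n|>1}\int_{\mathbb{R}^{n-1}}
\frac{du_1\cdots du_{n-1}\,du_n}{|u_1\cdots u_n(u_1+\cdots+u_n)|^{\beta}}<\infty,
$$
after which $I\le nJ$. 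The gain of this reduction is that the inner variables $u_1,\ldots,u_{n-1}$ now range over all of $\mathbb{R}$, which is exactly the setting of Lemma~\ref{lem1}; and since the integrand is nonnegative, Tonelli's theorem permits integrating in any order.

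Next I would fix $u_n$ with $|u_n|>1$ and integrate out $u_1,\ldots,u_{n-1}$ successively. At the first step I pair $|u_1|^{-\beta}$ with $|u_1+(u_2+\cdots+u_n)|^{-\beta}$ and apply Lemma~\ref{lem1} with both exponents equal to $\beta$ and $y=u_2+\cdots+u_n$; this is legitimate once $2\beta>1$ and produces $C_1|u_2+\cdots+u_n|^{-(2\beta-1)}$. Iterating, I claim that after integrating $u_1,\ldots,u_k$ the surviving factor is $C_k|u_{k+1}+\cdots+u_n|^{-b_{k+1}}$ with $b_{k+1}=(k+1)\beta-k$. Indeed, at step $k$ Lemma~\ref{lem1} is applied with $\alpha=\beta$ and second exponent $b_k=k\beta-(k-1)$, whose sum is $(k+1)\beta-(k-1)>1$ precisely when $\beta>k/(k+1)$, and the output exponent is the sum minus one, namely $(k+1)\beta-k$. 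Each intermediate exponent $b_k=k\beta-(k-1)$ lies in $(0,1)$ because $\beta\in((k-1)/k,1)$, so the hypotheses of Lemma~\ref{lem1} hold at every stage; among the inner steps the binding constraint is $k=n-1$, i.e. $\beta>(n-1)/n$, which is implied by $\beta>n/(n+1)$.

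After the $n-1$ inner integrations I am left, up to a multiplicative constant, with $|u_n|^{-(n\beta-(n-1))}$; multiplying by the remaining factor $|u_n|^{-\beta}$ gives total exponent $(n+1)\beta-(n-1)$, so that
$$
J\le C\int_{|u_n|>1}|u_n|^{-((n+1)\beta-(n-1))}\,du_n,
$$
which is finite exactly when $(n+1)\beta-(n-1)>1$, that is, when $\beta>n/(n+1)$. This is precisely the hypothesis (and the formula also covers the degenerate base case $n=1$, where $J=\int_{|u_1|>1}|u_1|^{-2\beta}\,du_1$), so the argument closes. The \emph{main obstacle} is not any single estimate but the bookkeeping: one must verify that every intermediate power $b_k$ stays inside $(0,1)$ and that every partial sum of exponents exceeds $1$, so that Lemma~\ref{lem1} is applicable at each stage, while noting that the measure-zero sets where some partial sum $u_{k+1}+\cdots+u_n$ vanishes (the excluded case $y\neq0$ in Lemma~\ref{lem1}) are irrelevant under the integral.
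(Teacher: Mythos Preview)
Your proof is correct and follows essentially the same route as the paper: bound $\mathbf{1}_{E^C}$ by $\sum_i \mathbf{1}_{\{|u_i|>1\}}$, use symmetry to reduce to a single term, and then iterate Lemma~\ref{lem1} to peel off the inner variables one at a time, arriving at $\int_{|u|>1}|u|^{-((n+1)\beta-(n-1))}\,du<\infty$. If anything, your write-up is more careful than the paper's, since you explicitly check that each intermediate exponent $b_k=k\beta-(k-1)$ lies in $(0,1)$ and that each pairwise sum exceeds $1$, which the paper leaves implicit.
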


\vskip2mm

\n {\sl Proof of Theorem \ref{T5}.} We start with {\bf (A2)}.
By Lemma \ref{lem72}, 3) and \ref{lem73}, b), we have
$\Delta(T)= (2\pi)^{m-1}|R(T)|$, where
$$
R(T)=
\int_{\mathbb{R}^3}[\Psi(\uu)-\Psi(\0)]\Phi_T(\uu)d\uu.
$$
It follows from (\ref{lem72-2}) that the function $\Psi(\uu)$ belongs to
$ L^{m-2}(\mathbb{R}^{m-1})$ and is continuous
at ${\bf 0}=(0,\ldots,0)\in \mathbb{R}^{m-1}$.
Hence for any  $\vs>0$ we can find  $\de>0$ to satisfy
\beq\label{d-3}
|\Psi(\uu)-\Psi(\0)|<\frac \vs{C_1},\quad \uu\in \mathbb{E}_\delta,
\eeq
where $C_1$ is the constant from Lemma \ref{lem73}, a).
Consider the decomposition $\Psi=\Psi_1+\Psi_2$ such that
\beq\label{d-4}
\|\Psi_1\|_{(m-2)}\le \frac \vs{\sqrt{C_\de}} \q\text {and}\q
\|\Psi_2\|_\infty<\infty,
\eeq
where ${C_\de}$ is as in Lemma \ref{lem73}, d).

Observe that $\frac 1{m-2}+\frac1{p(m)}=1$, where $p(m)=\frac{m-2}{m-3}$.
Hence, applying Lemma \ref{lem73} and (\ref{d-3}), (\ref{d-4})
for sufficiently large $T$ we obtain
\beaa\label{d-5}
\notag
|R(T)|&\le&
\int_{\mathbb{E}_\de}|\Psi(\uu)-\Psi(\0)||\Phi_T(\uu)|d\uu
+C_m\int_{\mathbb{E}^c_\de}|\Psi_1(\uu)||\Phi_T(\uu)|d\uu\\
&+&\int_{\mathbb{E}^c_\de}|\Psi_2(\uu)-\Psi(\0)||\Phi_T(\uu)|d\uu
\le  \frac\vs{C_1}\int_{\mathbb{E}_\de}|\Phi_T(\uu)|d\uu\\
&+&\|\Psi_1\|_{(m-2)}\biggl[\int_{\mathbb{E}^c_\de}\Phi_T^{p(m)}
(\uu)d\uu\biggr]^{1/{p(m)}}
+C_2 \int_{\mathbb{E}^c_\de}|\Phi_T(\uu)|d\uu\le 3\, \vs,
\eeaa
and the result follows.
\qed

\vskip2mm
\n
{\sl Proof of {\bf (A1)}}.
According to Theorem (A2) it is enough to prove that the function
\beq \label {c-20}
\varphi({\bf u}):\,=\,
\itf h_1(\la)h_2(\la-u_1)h_3(\la-u_2)\cdots h_m(\la-u_{m-1})\,d\la,
\eeq
where ${\bf u}=(u_1,\ldots,u_{m-1})\in\rr^{m-1}$, belongs to
$L^{m-2}({\rr}^{m-1})$ and is continuous at
${\bf 0}=(0,\ldots,0)\in \rr^{m-1}$, provided that
\beq\label{c-21}
h_i\in L^{1}(\rr)\bigcap L^{p_i}(\rr),\quad 1\leq p_i
\leq\infty,\quad i=1,\ldots m,\quad \sum_{i=1}^m \frac1{p_i}\leq 1.
\eeq
It follows from H\"older inequality, (\ref {c-20}) and (\ref{c-21}) that
$$
|\varphi ({\bf u})|\leq\prod_{i=1}^m||f_i||_{L^{p_i}(\rr)}<\infty,
\quad {\bf u}\in \rr^{m-1}.
$$
Hence, $\varphi\in L^\infty(\rr^{m-1})$. On the other
hand, the condition $h_i\in L^{1}(\rr)$ and (\ref{c-20}) imply
$\varphi \in L^1(\rr^{m-1})$. Therefore $\varphi \in L^{m-2}(\rr^{m-1})$.

To prove the continuity of $\varphi ({\bf u})$ at the point
${\bf 0}$ we consider three cases.

{\it Case 1.\/}  $p_i<\infty,\quad i=1,\ldots,m$.

\n For an arbitrary $\varepsilon>0$ we can find $\delta >0$
satisfying (see (\ref{c-21}))
\beq\label{c-22}
\|h_i(\la-u)-h_i(\la)\|_{L^{p_i}(\rr)}\leq\varepsilon,\ \ i=2,\ldots,m,
\quad {\rm if}\quad|u|\leq\delta.
\eeq
We fix ${\bf u}=(u_1,\ldots,u_{m-1})$ with $|{\bf u}| <\delta$ and denote
$$
\overline{h}_i(\la)=h_i(\la-u_{i-1})-h_i(\la),\quad i=2,\ldots,m.
$$
Then in view of (\ref{c-20}) we have
$$
\varphi ({\bf u})=\int_\mathbb{R}h_1(\la)\prod_{i=2}^{m-1}
\left(\overline{h}_i(\la)+h_i(\la)\right)d\la=
\varphi ({\bf 0}) +W.
$$
It follows from (\ref{c-22}) that  $\|\overline{h}_i\|_{p_i}\leq\varepsilon$,
$i=2,\ldots,m$. Observe that each of the integrals comprising $W$
contains at least one function
$\overline{h}_i$ and can be estimated as follows:
\begin{align}
\left|\int_\rr
h_1(u)\overline{h}_2(\la)h_3(\la)\ldots h_{m-1}(\la)
d\la\right|\leq & \notag \\
\|h_1\|_{L^{p_1}}\|\overline{h}_2\|_{L^{p_2}}\|h_3\|_{L^{p_3}}
\ldots \|h_m\|_{L^{p_m}}
\leq & \ C \varepsilon. \notag
\end{align}

{\it Case 2.\/}  $p_i \leq\infty,\ i=1,\ldots,m,\ \ \sum\limits_{i=m}^m \frac1{p_i}<1$.

\n There exist finite numbers $p'_i <p_i,\ i=1,\ldots, m$, such that
$\sum_{i=1}^m 1/{p'_i}\leq 1$. Hence according to (\ref{c-21}) we
have $h_i\in L^{p_i'}\ i=1,\ldots,m$ and $\varphi$ is continuous
at ${\bf 0}$ as in the case 1.

{\it Case 3.\/} $p_i \leq\infty,\ i=1,\ldots,m,\ \sum\limits_{i=1}^m \frac1{p_i}=1$.

\n First observe that at least one of the numbers $p_i$ is finite.
Suppose, without loss of generality, that $p_1<\infty$. For any
$\varepsilon>0$ we can find functions $h_1', h_1''$ such that
\beq\label{c-23}
h_1=h_1'+h_1'',\quad h_1'\in L^\infty(\rr),\quad
\|h_1''\|_{L^{p_1}}<\varepsilon. \eeq
Therefore
$$
\varphi ({\bf u})=\varphi' ({\bf u})+\varphi'' ({\bf u}),
$$
where the functions $\varphi'$ and $\varphi''$  are defined as
$\varphi$ in (\ref {c-20}) with $h_1$ replaced by $h'_1$ and
$h''_1$, respectively. It follows from (\ref{c-23}) that
$\varphi'$ is continuous at ${\bf 0}$ (see Case 2), while
by H\"older inequality $|\varphi''({\bf u})|\leq C\cdot \varepsilon$.
Hence, for sufficiently small $|{\bf u}|$
$$
|\varphi({\bf u})-\varphi({\bf 0})|\leq |\varphi'({\bf
u})-\varphi'({\bf 0})|+ |\varphi''({\bf u})-\varphi''({\bf 0})|\leq
(C+1)\varepsilon,
$$
and the result follows.  Theorem \ref{T5} is proved.
\qed

\vskip2mm
{\sl Proof of Theorem \ref{T5-1}.} We start with {\bf (B1)}.
First observe that the condition $h_i\in {\cal F}_1$ implies that
\beq
\label{hi}
{\hat h}_i\in L^1(\rr)\quad {\rm and}\quad |{\hat h}_i(t)|\leq A,
\quad t\in \rr,\quad i=1,2,\ldots,m
\eeq
for some constant $A>0$.
By Lemma \ref {lem72} we have
$$
T\cdot S(T)=\int_{0}^T\ldots \int_0^T
{\hat h}_1(u_1-u_2){\hat h}_2(u_2-u_3)\ldots {\hat h}_m(u_m-u_1)du_1\ldots du_m.
$$
Making the change of variables
$$
u_1-u_2=t_1,\quad u_2-u_3=t_2, \ldots, u_{m-1}-u_m=t_{m-1},
$$
and observing that $t_1+\cdots+t_{m-1}=u_1-u_m$, we get
(below we use the notation: ${\bf t}_{m-1}= (t_1,\ldots, t_{m-1})$
and $d{\bf t}_{m-1}=dt_1\cdots dt_{m-1}$),
\bea
\label{tst}
T\cdot S(T)=
\int_{0}^T\int_{u_m-t_1-\ldots - t_{m-1}-T}^{u_m-t_1-\ldots - t_{m-1}}
\int_{u_m-t_1-\ldots - t_{m-2}-T}^{u_m-t_1-\ldots - t_{m-2}}\cdots
\eea
\beq
\nonumber
\cdots\int_{u_m-t_1-T}^{u_m-t_1}{\hat h}_1(t_1)
\cdots {\hat h}_{m-1}(t_{m-1}){\hat h}_{m}(-t_1-\ldots-t_{m-1})
d{\bf t}_{m-1}du_m
\eeq
\beq
\nonumber
=\int_{-T}^T\cdots\int_{-T}^T {\hat h}_1(t_1)\cdots
{\hat h}_{m-1}(t_{m-1})
{\hat h}_{m}(-t_1-\ldots-t_{m-1})\left[T-l({\bf t}_{m-1})\right]d{\bf t}_{m-1},
\eeq
where
\beq\label{lt}
\left|l({\bf t}_{m-1})\right|=
\left|l(t_1,\ldots,t_{m-1})\right|\leq2\left(|t_1|+\ldots+|t_{m-1}|
\right).
\eeq
On the other hand, by (\ref{n 4-4}) and Parseval's equality we have
\bea \label{m}
M:&=&M_{{\mathbb R},\mathcal{H}}= (2\pi)^{m-1}\int_{-\f}^{\f}
\left[\prod_{i=1}^m h_i(\la)\right]\,d\la\\ \nonumber
&=&\int_{-\infty}^\infty\cdots\int_{-\infty}^\infty {\hat h}_1(t_1)
\cdots {\hat h}_{m-1}(t_{m-1}){\hat h}_{m}(-t_1-\ldots-t_{m-1})
d{\bf t}_{m-1}.
\eea
It follows from (\ref{n 4-4}), (\ref{tst}) and (\ref {m}) that
\beq
\nonumber
S(T)-M:=S_{W,\cal H}(T)-M_{{\mathbb R},\mathcal{H}}
\eeq
\beq
\nonumber
= -\frac1T\int_{[-T,\,T]^{m-1}} {\hat h}_1(t_1)
\cdots {\hat h}_{m-1}(t_{m-1}){\hat h}_{m}(-t_1-\ldots-t_{m-1})l({\bf t}_{m-1})
d{\bf t}_{m-1}
\eeq
\bea
\label{delta}
\nonumber
&&+
\int_{\rr^{m-1}\setminus[-T,\,T]^{m-1}} {\hat h}_1(t_1)
\cdots {\hat h}_{m-1}(t_{m-1}){\hat h}_{m}(-t_1-\ldots-t_{m-1})d{\bf t}_{m-1}\\
&&=: \Delta_T^1+\Delta_T^2.
\eea
By (\ref{hi}), (\ref{lt}) and (\ref{delta}) we have
\beq \label{delta1}
|T\cdot\Delta_T^1|\leq 2A
\sum_{i=1}^{m-1}\int_{\rr^{m-1}} \big|{\hat h}_1(t_1)
\cdots {\hat h}_{m-1}(t_{m-1})t_i\big|
d{\bf t}_{m-1}=:A_1<\infty,
\eeq
since $h_i\in {\cal F}_1$, $i=1,2,\ldots,m$.

Further, observe that
$$
{\rr^{m-1}\setminus[-T,\,T]^{m-1}}\subset\bigcup_{i=1}^m
\big\{(t_1,\ldots,t_{m-1})\in\rr^{m-1}:|t_i|>T\big\}=:\bigcup_{i=1}^m E_i.
$$
Hence by (\ref{hi}) and (\ref{delta}) we have
\beq \label{delta2}
|T\cdot\Delta_T^2|\leq 2A
\sum_{i=1}^{m-1}\int_{E_i}\big| {\hat h}_1(t_1)
\ldots {\hat h}_{m-1}(t_{m-1})t_i\big|
d{\bf t}_{m-1}=:A_2<\infty.
\eeq
From (\ref{delta}) -- (\ref{delta2}) we get {\bf (B1)}.
\qed

\vskip2mm
\n
{\sl Proof of {\bf (B2)}}. By Lemma \ref{lem72}, 3) and
Lemma \ref{lem73}, b), and (\ref{n 4-5}) we have
\beq \label{a-3}
\Delta(T)=(2\pi)^{m-1}
\left|\int_{\rr^{m-1}}[\Psi({\bf u})-\Psi({\bf 0})]
\Phi_T({\bf u})d{\bf u}\right|,\quad {\bf 0}=(0,\ldots,0).
\eeq
It follows from (\ref{n 5-3}) and (\ref{lem72-2}) that
for ${\bf u}=(u_1, \ldots, u_{m-1})\in \rr^{m-1}$
\beq
\label{w2}
|\Psi({\bf u})-\Psi({\bf 0})|\leq (m-1)C
\left(|u_1|^\g +\cdots+|u_{2\nu-1}|^\g\right).
\eeq

\n Let $\varepsilon\in (0,\g)$. Then, applying
Lemma \ref{lem01} with $\de=\frac{1+\varepsilon-\g}{m}$,
and using (\ref{a-3}) and (\ref{w2}), we can write
\bea
\label{4-5} \nonumber
&&\Delta(T) \leq  C_m
\int_{E}|\Psi({\bf u})-\Psi({\bf 0})||\Phi_T({\bf u})|d{\bf u}\\
&&+C_m \int_{E^C}|\Psi({\bf u})-\Psi({\bf 0})||\Phi_T({\bf u})|d{\bf u}\\
\nonumber
&&\leq \frac {C_m}{T^{1-m\delta}} \sum\limits _{i=1}^{m-1}
 \int_{E}
\frac{|u_i|^\g}{|u_1\cdots u_{m-1}(u_1+\cdots+u_{m-1})|^{1-\delta}} \,
du_1\cdots du_{m-1}\\ \nonumber
&&+ 2\|\varphi\|_\infty \frac{C_m}{T^{1-m\delta}}\int_{E^C}
\frac{1}{|u_1\cdots u_{m-1}(u_1+\cdots+u_{m-1})|^{1-\delta}}
\, du_1\cdots du_{m-1},
\eea
where $E=\{(u_1,u_2,\ldots,u_{m-1})\in\rr^{m-1}: |u_i|\leq1,
\ i=1,2,\ldots,{m-1}\}$ and $E^C=\rr^{m-1}\setminus E$. Since
$$
\frac{m-1}{m}<1-\de<\frac{m-1+\g}{m},
$$
we can apply Lemmas \ref{lem2} and \ref{lem3}
with $\alpha=\gamma$, $n=m-1$ and $\beta=1-\delta$
to conclude that all the integrals in (\ref{4-5}) are finite.
Since $1-m\de=\g-\varepsilon$, from
(\ref{4-5}) follows the statement  {\bf (B2)}.
\qed

\vskip2mm
\n
{\sl  Proof of {\bf (B3)}}.
According to {\bf (B2)} it is enough to prove that
the function
\beq \label {c-2-0}
\varphi ({\bf u}):=\int_\rr
h_1(\la)h_2(\la-u_1)\cdots h_{m}(\la-u_{m-1})d{\bf u},
\eeq
where ${\bf u}=(u_1,\cdots,u_{m-1})\in\rr^{m-1},$ belongs to
$L^\infty(\rr^{m-1})$, and with some positive constant $C$
\beq\label{lip1}
|\varphi ({\bf u})-\varphi
({\bf 0})|\leq C|{\bf u}|^\g,
\q {\bf u}\in \rr^{m-1},
\eeq
provided that
\beq\label{c-2-1}
h_i\in \Lip(\rr;p_i, \g), \quad 1\leq p_i \leq\infty,\quad
i=1,2,\ldots,\,{m},\q {\rm and} \q  \sum_{i=1}^{m} \frac1{p_i}\leq 1.
\eeq
It follows
from H\"older inequality and (\ref{c-2-1}) that
$$
|\varphi ({\bf u})|\leq\prod_{i=1}^{m}||h_i||_{{p_i}}<\infty,
\quad {\bf u}\in \rr^{m-1}.
$$
Hence $\varphi\in L^\infty(\rr^{m-1})$.

To prove (\ref{lip1}) we fix ${\bf u}=(u_1,\ldots,u_{m-1})\in\rr^{m-1}$
and denote
\beq
\label{c-2-2}
\overline{h}_i(\la)=h_i(\la-u_{i-1})-h_i(\la),\quad \la\in \mathbb{R},
\quad i=2,\ldots,{m}.
\eeq
Since $h_i\in \Lip(\rr;p_i, \g)$ we have
\beq\label{c-2-3}
\|\overline{h}_i\|_{p_i}\leq C_i|{\bf u}|^\g,\q i=2,\ldots,{m}.
\eeq
By (\ref{c-2-0}) and (\ref{c-2-2}),
$$
\varphi ({\bf u})=\int_\mathbb{R}h_1(\la)\prod_{i=2}^{m}
\left(\overline{h}_i(\la)+h_i(\la)\right)d\la=
\varphi ({\bf 0}) + W.
$$
Each of the $(2^{m-1}-1)$ integrals comprising $W$ contains
at least one function $\overline{f}_i$, and in view
of (\ref{c-2-3}), can be estimated as follows:
$$
\left|\int_\rr
h_1(\la)\overline{h}_2(\la)h_3(\la)\cdots h_{m}({\la})d\la\right|
$$
$$
\leq \|h_1\|_{{p_1}}\|\overline{h}_2\|_{{p_2}}\|h_3(u)\|_{{p_2}}
\cdots \|h_{m}\|_{{p_{m}}}\leq C|{\bf u}|^\g.
$$
This completes the proof of {\bf (B3)}.
\qed

\vskip2mm
\n
{\sl Proof of {\bf (B4)}.}
We set
$$
\frac 1{p_i}:=\sigma_i+\frac1{m}\left[1-(\sigma_1+\cdots\sigma_m)\right],
\quad i=1,2,\ldots,m.
$$
Then
$$
\sum_{i=1}^m\frac1{p_i}=1 \q{\rm and} \q
\frac 1{p_i}-\sigma_i+\frac1{m}\left[1-(\sigma_1+\cdots\sigma_m)\right]=\gamma>0,
\quad
i=1,2,\ldots,m,
$$
and
$$
0<\sigma_i<\frac1p_{i}<1<\delta_i,\qq i=1,2,\ldots,m.
$$
Hence, according to Lemma \ref{R1}, $h_i\in\Lip(p_i, \g)$,
$i=1,2,\ldots,m$.
Applying (B3), we get
(\ref{n 5-6}) with $\gamma$ as in (\ref{n 5-7}).
\qed

\vskip2mm
\n {\it Proof of Theorem \ref{T5-2}.\/}
First observe that by Lemma 7 from \cite{GS5}
\beq
\label{F15}
S_{2, W}(T):=\frac1T\tr[W_T(h_1)W_T(h_2)]= 2\pi\int_\mathbb{R}\int_\mathbb{R} F_T(s-t)h_1(s)h_2(t)\,dt\,ds,
\eeq
where $F_T(u)$ is the  Fej\'er kernel:
$$ F_T(u)=\frac 1{2\pi T} \left (\frac {\sin{{Tu}/2}}{u/2}\right )^2,
\q t \in \mathbb{R}.
$$
Below we use the following properties of $F_T(u)$ (see, e.g., \cite{BN}):
\bea
\label{F12}
&&\int_\mathbb{R} F_T(u)\,du = 1,\\
\label{F13}
&&\int_{u \ge 1}F_T(u)du\le C\,T^{-1},\\
\label{F14}
&&
\int_0^1F_T(u)u^\al du\le \left \{
           \begin{array}{lll}
           CT^{-\al}, & \mbox {if \, $\al\le 1$}\\
           CT^{-1}\ln T, & \mbox {if \, $\al=1$}\\
           CT^{-1}, & \mbox {if \, $\al>1$}.
           \end{array}
           \right.
\eea
Since the function $F_T(u)$ is even, in view of (\ref{F15}) we can write
\bea
\label{F17}
S_{2, W}(T)=\pi\int_\mathbb{R}\int_\mathbb{R}
F_T(u)\left[h_1(u+t)h_2(t) + h_1(t)h_2(u+t)\right]dudt.
\eea
Consequently, taking into account (\ref{F12}) and the equality
$$\int_\mathbb{R} h_1(t)h_2(t)dt=\int_\mathbb{R} h_1(u+t)h_2(u+t)dt,$$
by (\ref{F17}) we get
\bea
\De_{2, W}(T):&=&
\left|\frac1T\tr[W_T(h_1)W_T(h_2)]- 2\pi\int_{\mathbb{R}}h_1(t)h_2(t)\,dt\right|\\
\label{F18}
\nonumber
&=&\left|\pi\int_\mathbb{R} F_T(u)
\int_\mathbb{R}(h_1(t)-h_1(u+t))(h_2(u+t)-h_2(t))dtdu\right|.
\eea
Using H\"older's inequality, we find from (\ref{F18})
\bea
\label{F19}
\De_{2, W}(T)\le\pi\int_\mathbb{R}
F_T(u)||h_1(u+\cdot)-h_1(\cdot)||_p||h_2(u+\cdot)-h_2(\cdot)||_qdu.
\eea
In view of (\ref{F19}) we have
\bea
\label{F20}
\De_{2, W}(T) \le C_1\int_0^1 F_T(u)|u|^{\g_1+\g_2}du +
C_2||h_1||_p||h_2||_q\int_{u>1}F_T(u)du.
\eea
Therefore, the result follows from (\ref{F13}), (\ref{F14}) and (\ref{F20}).
\qed

\vskip2mm
\n {\it Proof of Theorem \ref{T5-3}.\/}
It follows from Lemma \ref{R1} that under the assumptions of theorem
$h_i\in \Lip(p_i, 1/{p_i}-\sigma_i)$, $i=1,2$.
Hence, applying Theorem  \ref{T5-2} with
$\g_i= 1/{p_i}-\sigma_i$, we obtain (\ref{t53}).
\qed

\s{Appendix. Proof of Technical Lemmas}
\label{TL}

\n
In this section we give proofs of technical lemmas stated
and used in Sections \ref{ASP} and \ref{PR}.

\vskip2mm
\n {\it Proof of Lemma \ref{R1}.\/}
Let $h\in (0,1/2)$ be fixed. Then
\begin{equation}\label{R4-1}
\int\limits_{|\la|\le2h}|f(\la+h)-f(\la)|^pd\la
\le (2C)^p \int_0^{3h} |\la|^{-p\sigma} d\la \le C_1 h^{1-p\alpha}.
\end{equation}
Next, for $|\la|>2h$ we have
$f(\la+h)-f(\la)=f^\prime(\xi)\cdot h$ with some $\xi\in(\la,\la+h)$.
Hence
\begin{equation}\label{R4-3}
\int\limits_{2h<|\la|<1/2}|f(\la+h)-f(\la)|^pd\la
\le C^p h^p\int_h^{1/2} |\la|^{-p(\sigma+1)} d\la \le C h^{1-p\sigma}
\end{equation}
and
\bea\label{R4-2}
\nonumber
\int\limits_{1/2<|\la|<\f}|f(\la+h)-f(\la)|^pd\la
&&\le C^p h^p \int_{1/2}^1 |\la|^{-p(\sigma+1)}d\la \\ \nonumber
&&+C^p h^p \int_1^\infty |\la|^{-p(\delta+1)}d\la\\
&&\le C h^p\le C h^{1-p\sigma}.
\eea
From (\ref{R4-1}), (\ref{R4-3}) and (\ref{R4-2}) we get
$$
\|f(\la+h)-f(\la)\|_p\le Ch^{1/p-\sigma},
$$
implying $f\in \Lip(p, 1/p-\sigma)$.
\qed

\vskip2mm
\n {\it Proof of Lemma \ref{sl1}.\/}
We use the technique of \cite{IK1}, where
(\ref{s4}) was proved for $\be=1$.
Since the underlying process $X(t)$ is real-valued, we have for $t>0$
\beq
\label{sa1}
r(t):=\itf e^{it\la}f(\la)\,d\la=
2\int_0^\f\frac C{\la^{2\al}(1+\la^2)^\be}\cos(t\la)\,d\la.
\eeq
Using the change of variable $\la=1/(tu)$, we obtain
\bea
\label{sa02}
\nonumber
r(t) &=&2C\cd t^{2\al-1}\int_0^\f\left(\frac {(tu)^2}{1+(tu)^2}\right)^\be
u^{2\al-2}\cos(1/u)\,du\\
&=&2C\cd t^{2\al-1}\int_0^\f L(tu)k(u)\,du,
\eea
where
$$
L(u)=\frac {u^{2\be}}{(1+u^2)^\be}
\q {\rm and}\q
k(u)=u^{2\al-2}\cos(1/u).
$$

\n Choose $\de>0$ such that $\de<\min(1-2\al, 2\al)$.
Then the improper integrals
$$
\int_{0+}^1 u^{-\de}k(u)\,du \q {\rm and}\q
\int_1^{\f-} u^{\de}k(u)\,du
$$
exist. Therefore, by the Bojanic-Karamata theorem
(see \cite{BGT}, Th.4.1.5 ) we have
\beq
\label{sa2}
\int_0^\f k(u)L(tu)\,du\longrightarrow \int_{0+}^{\f-} k(u)\,du
\q {\rm as}\q t\to\f.
\eeq
Next, using the change of variable $1/u=v$ and the formula
(see, e.g., \cite{Dv})
$$
\int_{0}^{\f-} x^{-p}\cos(mx)\,dx
=\frac{\pi m^{p-1}}{2\cos(p\pi/2)\G(p)}, \q 0<p<1, \q m>0,
$$
with $p=2\al$ and $m=1$, we obtain
\bea
\label{sa3}
\int_{0+}^{\f-} k(u)\,du &=&\int_{0}^{\f-} u^{2\al-2}\cos(1/u)\,du
=\frac{\pi}{2\cos(\pi\al)\G(2\al)}.
\eea
From (\ref{sa1})--(\ref{sa3}) we get (\ref{s4}).
\qed

\vskip2mm
\n {\it Proof of Lemma \ref{lem73}.\/}
The proof of properties a) - c) can be found in  \cite {B}, Lemma 3.2
(see also \cite {GS2}, Lemma 2).
To prove d) first observe that for $T>0$
\beq \label{z1}
\int_{\mathbb{R}}\left|D_T(u)\right|^{p(m)}du\leq C\cdot T^{p(m)-1}
\quad \mbox{and} \quad
\left|D_T(u)\right|\leq C_\de  \quad \mbox{for} \quad |u|>\de.
\eeq
For $\uu=(u_1,\ldots,u_{m-1})\in {\mathbb R }^{m-1}$ we have
\bea \label{z2}
\nonumber
\int_{\mathbb{E}^c_\de}\Phi_T^{p(m)}(\uu)d\uu&\le&
\int\limits_{|u_1|>\de}\Phi_T^{p(m)}(\uu)d\uu
+ \int\limits_{|u_2|>\de}\Phi_T^{p(m)}(\uu)d\uu \\ \nonumber
&+&\ldots+\int\limits_{|u_{m-1}|>\de}\Phi_T^{p(m)}(\uu)d\uu\\
&=:& I_1+I_2+\ldots+I_{m-1}.
\eea
It is enough to estimate $I_1$ ($I_2, \ldots, I_n$ can be estimated in the same way). We have
\bea \label{z3}
\nonumber
I_1&\le& \int\limits_{|u_1|>\de, \,\,
|u_2|>\de/m}\Phi_T^{p(m)}(\uu)d\uu
+ \ldots+\int\limits_{|u_1|>\de, \,\, |u_{m-1}|>\de/m}\Phi_T^{p(m)}(\uu)d\uu \\
\nonumber
&+&\int\limits_{|u_1|>\de, \,\, |u_2|\leq\de/m, \,\,
|u_{m-1}|\leq\de/m}\Phi_T^{p(m)}(\uu)d\uu\\
&=:& I_1^{(2)}+\ldots+I_1^{(m-1)}+I_1^{(m)}.
\eea
According to (\ref{z1})
\begin{align}\label{z4}
\nonumber
I_1^{(2)}&\leq C_\de \cdot \frac1{T^{p(m)}}\cdot
\int\limits_{|u_2|>\de/m}\left|D_T(u_2)\right|^{p(m)}
\cdots \left|D_T(u_{m-1})\right|^{p(m)}\\
&\times
\left|D_T(u_1+\ldots+u_{m-1})\right|^{p(m)}du_1du_{m-1}\ldots du_2\notag\\
&\le C_\de\cdot
\frac1{T^{p(m)}}\cdot T^{(p(m)-1)(m-2)}
\int\limits_{|u_2|>\de/m}\frac1{|u_2|^{p(m)}} \, du_2\le C_\de.
\end{align}
Likewise,
\beq \label{z5}
I_1^{(j)}\leq C_\de,\qquad j=3,\ldots,  m-1.
\eeq
Next, observe that in the integral $I_1^{(m)}$, we have
$|u_1+\ldots+u_{m-1}|>\de/m,$. Hence by (\ref{z1})
\begin{align}\label{z6}
I_1^{(m)}&\leq C_{\de} \cd \frac1{T^{p(m)}}\int\limits_{|u_1|>\de}D^{p(m)}_T(u_1)\cdots
D^{p(m)}_n(u_{m-1})du_2\ldots du_{m-1}du_1\notag\\
&\le C_\de\int\limits_{|u_1|>\de}\frac1{|u_1|^{p(m)}} \, du_1\le C_\de.
\end{align}
From (\ref{z2}) -- (\ref{z6}) we obtain (\ref{z0}).
Lemma \ref{lem73} is proved.

\vskip2mm
\n {\it Proof of Lemma \ref{lem2}.\/}
Using Lemma \ref{lem1} and the notation
$d{\bf u}=du_ndu_{n-1}\cdots du_1$, we can write
$$
B_1\leq\int\limits_{\{|u_1|\leq1\}}\frac1{|u_1|^{\beta-\alpha}}
\int\limits_{\rr^{n-2}}\frac1{|u_2\cdots u_{n-1}|^\beta}
\int\limits_{\rr}\frac1{|u_n(u_1+\cdots+u_n)|^\beta} \,d{\bf u}
$$
$$
\leq C\int\limits_{\{|u_1|\leq1\}}\frac1{|u_1|^{\beta-\alpha}}
\int\limits_{\rr^{n-2}}\frac1{|u_2\cdots u_{n-1}|^\beta|(u_1+\cdots+
 u_{n-1})|^{2\beta-1}}du_{n-1}\cdots du_1
$$
$$
\leq C^2\int\limits_{\{|u_1|\leq1\}}\frac1{|u_1|^{\beta-\alpha}}
\int\limits_{\rr^{n-3}}\frac1{|u_2\cdots u_{n-2}|^\beta|(u_1+\cdots+
 u_{n-2})|^{3\beta-2}}du_{n-2}\cdots du_1
 $$
\beaa
&\leq&
\ldots\ldots\ldots\ldots\ldots\ldots\ldots\ldots\ldots\ldots\ldots
\ldots\ldots\ldots\ldots\ldots\ldots\ldots\\
&\leq&
C^{n-2}\int\limits_{\{|u_1|\leq1\}}\frac1{|u_1|^{\beta-\alpha}}
\int\limits_{\rr}\frac1{|u_2|^\beta|(u_1+ u_2)|^{(n-1)\beta-n+2}}du_2du_1\\
&\leq&
C^{n-1}\int\limits_{\{|u_1|\leq1\}}\frac1{|u_1|^{(n+1)\beta-\alpha-n+1}}
du_1<\infty,
\eeaa
yielding (\ref{a-05}) for $i=1$.
The quantities $B_2,\ldots, B_n$ can be estimated in the same way.

\vskip2mm

\n {\it Proof of Lemma \ref{lem3}.\/}
We have
\begin{equation}
\label{a-76}
I\leq
\int\limits_{|u_1|>1}+\cdots+\int\limits_{|u_n|>1}
\frac1{|u_1\cdots u_n(u_1+\cdots+u_n)|^\beta} \, du_1\cdots du_n
=:I_1+\cdots+I_n.
\eeq
Using Lemma \ref{lem1} we get
\beaa
I_1&\leq&
\int\limits_{|u_1|>1}\frac1{|u_1|^\beta}\int\limits_{\rr^{n-2}}
\frac1{|u_2\cdots u_{n-1}|^\beta}
\int\limits_{\rr}\frac1{|u_n(u_1+\cdots+u_n)|^\beta} \,
du_n\cdots du_1\\
&\leq&
C\int\limits_{|u_1|>1}\frac1{|u_1|^\beta}\int\limits_{\rr^{n-2}}
\frac1{|u_2\cdots u_{n-1}|^\beta |u_1+\cdots+u_{n-1}|^{2\beta-1}} \,
du_{n-1}\cdots du_1\\
&\leq&\ldots\ldots\ldots\ldots\ldots\ldots\ldots\ldots\ldots\ldots\ldots
\ldots\ldots\ldots\ldots\ldots\ldots\ldots\ldots\ldots\ldots\\
&\leq&
C^{n-1}\int\limits_{|u_1|>1}\frac1{|u_1|^{(n+1)\beta-n+1}} \, du_1<\infty.
\eeaa
The quantities $I_2, \ldots, I_n$ can be estimated in the same way,
and by (\ref{a-76}) the result follows.

\end{document}